\numberwithin{equation}{section}
\theoremstyle{plain}
\newtheorem{theorem}{Theorem}[section]
\newtheorem{Def}[theorem]{Definition}
\newtheorem{proposition}[theorem]{Proposition}
\theoremstyle{definition}
\theoremstyle{remark}
\newtheorem{remark}[theorem]{Remark}
\newtheorem{case[theorem]}{Case}
\def \N{{\mathbb N}}
\def \Z{{\mathbb Z}}
\def \T{{\mathbb T}}
\def\supp{\hbox{supp\,}}
\def\norm#1.#2.{\lVert#1\rVert_{#2}}
\title[Localization operators on discrete modulation spaces]{Localization operators on discrete modulation spaces}
\author{Aparajita Dasgupta}
\author{Anirudha Poria}
\thanks{Research supported by Core Research Grant (RP03890G), Science and Engineering Research Board (SERB), DST, India.}
\address{Department of Mathematics, Indian Institute of Technology Delhi, New Delhi 110016, India}
\email{adasgupta@maths.iitd.ac.in}
\address{Department of Mathematics, Indian Institute of Technology Delhi, New Delhi 110016, India}
\address{Department of Mathematics, SRM Institute of Science and Technology, Kattankulathur 603203, Tamil Nadu, India}
\email{anirudhamath@gmail.com, anirudhp@srmist.edu.in}
\keywords{Short-time Fourier transform; discrete modulation spaces; localization operators; Schatten--von Neumann class; compact operators; paracommutators; paraproducts; Fourier multipliers.}
\subjclass[2010]{Primary 47G30; Secondary 42B35, 47B10.}
\date{\today}
\begin{document}
\maketitle

\begin{abstract} 
In this paper, we study a class of pseudo-differential operators known as time-frequency localization operators on $\Z^n$, which depend on a symbol $\varsigma$ and two windows functions $g_1$ and $g_2$. We define the short-time Fourier transform on $ \Z^n \times \T^n $ and modulation spaces on $\Z^n$, and present some basic properties. Then, we use modulation spaces on $\Z^n \times \T^n$ as appropriate classes for symbols, and study the boundedness and compactness of the localization operators on modulation spaces on $\Z^n$. Then, we show that these operators are in the Schatten--von Neumann class. Also, we obtain the relation between the Landau--Pollak--Slepian type operator and the localization operator on $\Z^n$. Finally, under suitable conditions on the symbols, we prove that the localization operators are paracommutators, paraproducts and Fourier multipliers.
\end{abstract}

\section{Introduction}  
Time-frequency localization operators are a mathematical tool to analyze functions in different regions in the time-frequency plane. They can be viewed as transformations that modify the content of a function simultaneously in time and frequency and reconstruct a filtered signal. The localization operators were introduced and studied by Daubechies \cite{dau88, dau90, pau88}, Ramanathan and Topiwala \cite{ram93}, and extensively investigated in \cite{fei02, won99, won02}. This class of operators occurs in various branches of applied and pure mathematics and has been studied by many authors. Localization operators are recognized as an important new mathematical tool and have found many applications to the theory of differential equations, quantum mechanics, time-frequency analysis, and signal processing (see \cite{cor03, mar02, now02, gro01, ram93, won02}). They are also known as Toeplitz operators, wave packets, anti-Wick operators, or Gabor multipliers (see \cite{now02, ber71, cor78, fei02}). For a detailed study of the theory of localization operators, we refer to the series of papers of Wong \cite{bog04, liu07, won2001, won2002, won2003}, and also the book of Wong \cite{won02}. In this paper, we attempt to study the localization operators on modulation spaces on $\Z^n$. Also, we show that localization operators with separable symbols are paracommutators. Paracommutators are bilinear pseudo-differential operators that contain Hankel operator, Toeplitz operator, and many other operators as special cases. The motivation to study the localization operators as paracommutators is to generalize many well-known operators. In particular, if the symbol is dependent on one variable, we show that localization operators can be viewed as paraproducts and Fourier multipliers.

Time-frequency localization operators were defined using the Schr\"odinger representation and the short-time Fourier transform, which suggests studying these operators as a part of time-frequency analysis. Modulation spaces were used as the appropriate function spaces for understanding these operators, as these spaces are associated to the short-time Fourier transform. Since localization operators are known as a class of pseudo-differential operators, recent works in pseudo-differential operators on $\Z^n$ (see \cite{ bot20}) motivated us to study the localization operator on $\Z^n$. Here, we define the modulation space on $\Z^n$ using the short-time Fourier transform on $\Z^n \times \T^n$. Our main aim in this paper is to expose and study the boundedness and compactness of the localization operators on $\Z^n$ under suitable conditions on symbols and windows and show that these operators are in the Schatten--von Neumann class. Then, we provide a connection between the Landau--Pollak--Slepian type operator and the localization operator on $\Z^n$. Finally, we show that under suitable conditions on the symbols the localization operators are paracommutators, paraproducts and Fourier multipliers.

For the study of the localization operator on $\mathbb{Z}^n$, what kind of spaces should be considered for the symbol? In the case of the localization operator on $\mathbb{R}^{n}$,  the symbol is a function on $\mathbb{R}^{n} \times \mathbb{R}^{n} $. Recent works in pseudo-differential operators on topological groups $G$ suggest that the correct phase space to work in is $G \times \widehat{G}$, where $\widehat{G}$ is the dual group of $G$. Since the dual group of $\mathbb{R}^{n}$ is the same as $\mathbb{R}^{n}$, the phase space on which symbols are defined is $\mathbb{R}^{n} \times \mathbb{R}^{n}$. For the group $\mathbb{Z}^n$, the dual group is $\T^n$ and the phase space $G \times \widehat{G}$ is $\mathbb{Z}^n \times \T^n$. In this paper, we consider the symbol as a function on $\mathbb{Z}^n \times \T^n$ and study the localization operator on $\mathbb{Z}^n$.

For $1 \leq p<\infty$, the set of all measurable functions $F$ on $\mathbb{Z}^n$ such that
\[ \|F\|_{\ell^{p}(\mathbb{Z}^n)}^{p}=\sum_{k \in \mathbb{Z}^n}|F(k)|^{p}<\infty \]    
is denoted by $\ell^{p}(\mathbb{Z}^n)$. We define $L^{p}(\T^n)$ to be the set of all measurable functions $f$ on $\T^n$ for which
\[ \|f\|_{L^{p} (\T^n )}^{p}= \int_{\T^n} |f(w)|^{p} \;dw < \infty. \]
Next, we define the Fourier transform $\mathcal{F}_{\mathbb{Z}^n} F$ of $F \in \ell^{1}(\mathbb{Z}^n)$ to be the function on $\T^n$ by
\[ \left(\mathcal{F}_{\mathbb{Z}^n} F \right)(w)=\sum_{k \in \mathbb{Z}^n} e^{-2 \pi i k \cdot w} F(k), \quad w \in \T^n. \]
Let $f$ be a function on $\T^n$, then we define the Fourier transform $\mathcal{F}_{\T^n} f$ of $f$ to be the function on $\mathbb{Z}^n$ by
\[ \left(\mathcal{F}_{\T^n} f\right)(k)= \int_{\T^n} e^{2 \pi i k \cdot w} f(w) \;dw, \quad k \in \mathbb{Z}^n. \]
Note that $\mathcal{F}_{\mathbb{Z}^n}: \ell^{2}(\mathbb{Z}^n) \rightarrow L^{2} (\T^n)$ is a surjective isomorphism. 
Also, $\mathcal{F}_{\mathbb{Z}^n}=\mathcal{F}_{\T^n}^{-1}=\mathcal{F}_{\T^n}^{*} $ and $\left\|\mathcal{F}_{\mathbb{Z}^n} F\right\|_{L^{2}\left(\T^n\right)}=\|F\|_{\ell^{2}(\mathbb{Z}^n)},$ $F \in \ell^{2}(\mathbb{Z}^n)$. 

For $1 \leq p<\infty$, we define $L^{p}\left(\mathbb{Z}^n \times \T^n \right)$ to be the space of all measurable functions $H$ on $\mathbb{Z}^n \times \T^n$ such that
\[ \|H\|_{L^{p}\left(\mathbb{Z}^n \times \T^n \right)}^{p}= \sum_{k \in \mathbb{Z}^n} \int_{\T^n}  |H(k, w)|^{p} \;dw < \infty .\]

The paper is organized as follows. In Section \ref{sec2},
we define the short-time Fourier transform on $ \Z^n \times \T^n $ and study some basic properties such as orthogonality relation, Plancherel's formula, and inversion formula. In Section \ref{sec3}, we define the modulation spaces on $\Z^n$ and present some basic properties. In Section \ref{sec4}, we define and study the localization operator on $\Z^n$. We use modulation spaces on $\Z^n \times \T^n$ as appropriate classes for symbols, and study the boundedness and compactness of the localization operators on modulation spaces on $\Z^n$. Also, we show that these operators are in the Schatten--von Neumann class. In Section \ref{sec5}, we define the Landau--Pollak--Slepian type operator on $\Z^n$ and show that this operator is a localization operator on $\Z^n$. In Section \ref{sec678}, we study the localization operators for different types of symbols. First, in Subsection \ref{sec6}, we show that the localization operators with separable symbols are paracommutators. Then, in Subsection \ref{sec7}, we prove that if the symbol is independent of the second variable, i.e., a function on $\Z^n$ only, then the localization operator can be expressed in terms of a paraproduct. Finally, in Subsection \ref{sec8}, we show that if the symbol is independent of the first variable, i.e., a function on $\T^n$ only, then the localization operator is a Fourier multiplier.

\section{Short-time Fourier transform on $\Z^n \times \T^n$}\label{sec2}

The Schwartz space $\mathcal{S}\left(\mathbb{Z}^{n}\right)$ on $\mathbb{Z}^{n}$ is the space of rapidly decreasing functions $g: \mathbb{Z}^{n} \rightarrow \mathbb{C}$, i.e. $g \in \mathcal{S}\left(\mathbb{Z}^{n}\right)$ if for any $M<\infty$ there exists a constant $C_{g, M}$ such that 
$$|g(k)| \leq C_{g, M} (1+|k|)^{-M}, \quad \text { for all } k \in \mathbb{Z}^{n}.$$ 
The topology on $\mathcal{S}\left(\mathbb{Z}^{n}\right)$ is given by the seminorms $p_{j}$, where $j \in \mathbb{N}_{0}= \N \cup \{0\}$ and $p_{j}(g):=\sup\limits_{k \in \mathbb{Z}^{n}}(1+|k|)^{j}|g(k)|$. The space $\mathcal{S}^{\prime}\left(\mathbb{Z}^{n}\right)$ of all continuous linear functionals on $\mathcal{S}\left(\mathbb{Z}^{n}\right)$ is called the space of tempered distributions. 

Fix $k \in \Z^n$, $w \in \T^n$ and $f \in \ell^2(\Z^n)$. For $m \in \Z^n$, the translation operator $T_k$ is defined by $T_k f(m)=f(m-k)$ and the modulation operator $M_w$ is defined by $M_w f(m)=e^{2 \pi i w \cdot m} f(m)$. Let $g \in \mathcal{S}\left(\mathbb{Z}^{n}\right)$ be a fixed window function. Then, the short-time Fourier transform (STFT) of a function $f \in \mathcal{S}^{\prime}\left(\mathbb{Z}^{n}\right)$ with respect to $g$ is defined to be the function on $\Z^n \times \T^n$ given by
\[ V_g f (m, w)= \left\langle f, M_w T_m g \right\rangle =\sum_{k \in \Z^n} f(k) \overline{M_w T_m g (k)} =\sum_{k \in \Z^n} f(k) \overline{g(k-m)} e^{-2 \pi i w \cdot k}.\]
For $k \in \Z^n$, we define $\tilde{g}(k)=g(-k)$. Then, we can write $V_g f$ as a convolution on $\Z^n$
\[ V_g f (m, w)= e^{-2 \pi i w \cdot m} \left( f * M_w \overline{\tilde{g}} \right)(m). \]
Next, we present some basic properties of the STFT on $\Z^n \times \T^n$.
\begin{proposition}
\begin{itemize}
\item[(1)] $($Orthogonality relation$)$  For every $f_1, f_2, g_1, g_2 \in \ell^2(\Z^n)$, we have
\begin{equation}\label{eq18}
\left\langle V_{g_1} f_1, V_{g_2} f_2 \right\rangle_{L^2(\Z^n \times \T^n)}=\langle f_1, f_2 \rangle_{\ell^2(\Z^n)} \; \langle g_2, g_1 \rangle_{\ell^2(\Z^n)}.
\end{equation}

\item[(2)] $($Plancherel's formula$)$ Let $g \in \ell^2(\Z^n)$ be a non-zero window function. Then for every $f \in \ell^2(\Z^n)$, we have
\begin{equation}\label{eq17}
\left\Vert V_g f \right\Vert_{L^2(\Z^n \times \T^n)} = \Vert f \Vert_{\ell^2(\Z^n)} \; \Vert g \Vert_{\ell^2(\Z^n)}.
\end{equation}

\item[(3)] $($Inversion formula$)$ Let $g, \gamma \in \ell^2(\Z^n)$ and $\langle g, \gamma\rangle_{\ell^2(\Z^n)} \neq 0$. Then for every $f \in \ell^2(\Z^n)$, we have
\[f=\frac{1}{\langle\gamma, g\rangle_{\ell^2(\Z^n)}}
\sum_{m \in \Z^n} \int_{\T^n} V_g f(m,w) \; M_w T_m \gamma  \; dw.\]
\end{itemize}
\end{proposition}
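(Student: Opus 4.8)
The plan is to establish the orthogonality relation (1) by a direct computation, to read off Plancherel's formula (2) as its diagonal case, and to deduce the inversion formula (3) from (1) by a weak-reconstruction (duality) argument.

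For (1) the starting point is the observation that, for each fixed $m\in\Z^n$, the function $w\mapsto V_gf(m,w)$ is precisely the Fourier transform $\mathcal F_{\Z^n}$ of the sequence $k\mapsto f(k)\overline{g(k-m)}=\bigl(f\cdot\overline{T_mg}\bigr)(k)$. Since $g_1,g_2\in\ell^2(\Z^n)\subset\ell^\infty(\Z^n)$, each such product lies in $\ell^1(\Z^n)\cap\ell^2(\Z^n)$, so I can invoke the isometry (equivalently, the Parseval identity) of $\mathcal F_{\Z^n}\colon\ell^2(\Z^n)\to L^2(\T^n)$ recalled earlier to obtain, for each $m$,
\[
\int_{\T^n}V_{g_1}f_1(m,w)\,\overline{V_{g_2}f_2(m,w)}\,dw
=\bigl\langle f_1\,\overline{T_mg_1},\,f_2\,\overline{T_mg_2}\bigr\rangle_{\ell^2(\Z^n)}
=\sum_{k\in\Z^n}f_1(k)\overline{f_2(k)}\,\overline{g_1(k-m)}\,g_2(k-m).
\]
I then sum over $m\in\Z^n$, interchange the $m$-sum and the $k$-sum, and substitute $j=k-m$ in the inner sum to collapse it to $\sum_{j}\overline{g_1(j)}g_2(j)=\langle g_2,g_1\rangle_{\ell^2(\Z^n)}$, which leaves $\langle f_1,f_2\rangle_{\ell^2(\Z^n)}\langle g_2,g_1\rangle_{\ell^2(\Z^n)}$, i.e.\ \eqref{eq18}. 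The interchange is licit by absolute convergence: two applications of the Cauchy--Schwarz inequality give
\[
\sum_{m\in\Z^n}\sum_{k\in\Z^n}|f_1(k)|\,|f_2(k)|\,|g_1(k-m)|\,|g_2(k-m)|
\le\|f_1\|_{\ell^2}\|f_2\|_{\ell^2}\|g_1\|_{\ell^2}\|g_2\|_{\ell^2}<\infty,
\]
and the same estimate, read with $f_1=f_2=f$ and $g_1=g_2=g$, shows en route (via Tonelli) that $V_gf\in L^2(\Z^n\times\T^n)$ whenever $f,g\in\ell^2(\Z^n)$, so every inner product above is meaningful.

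Part (2) is then immediate: putting $f_1=f_2=f$ and $g_1=g_2=g$ in \eqref{eq18} gives $\|V_gf\|_{L^2(\Z^n\times\T^n)}^2=\|f\|_{\ell^2(\Z^n)}^2\|g\|_{\ell^2(\Z^n)}^2$, which is \eqref{eq17}. For (3) I would read the right-hand side weakly: for an arbitrary $h\in\ell^2(\Z^n)$ I pair it against $h$ in $\ell^2(\Z^n)$ and move the inner product inside the sum over $m$ and the integral over $w$ (again legitimate by the Cauchy--Schwarz bound above, now with functions $f,h$ and windows $g,\gamma$); using $\langle M_wT_m\gamma,h\rangle_{\ell^2(\Z^n)}=\overline{V_\gamma h(m,w)}$, the pairing becomes $\langle V_gf,V_\gamma h\rangle_{L^2(\Z^n\times\T^n)}$, which equals $\langle f,h\rangle_{\ell^2(\Z^n)}\langle\gamma,g\rangle_{\ell^2(\Z^n)}$ by \eqref{eq18}. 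Dividing by $\langle\gamma,g\rangle_{\ell^2(\Z^n)}\neq0$ shows that the proposed right-hand side has the same $\ell^2$-inner product with every $h$ as $f$ does, hence equals $f$; the series moreover converges in $\ell^2(\Z^n)$, being $\langle\gamma,g\rangle_{\ell^2(\Z^n)}^{-1}$ times the value at $V_gf$ of the bounded operator adjoint to the analysis map $h\mapsto V_\gamma h$.

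I do not expect a genuine obstacle here: the argument is elementary once (1) is in hand, and (1) itself rests only on the unitarity of $\mathcal F_{\Z^n}$ together with one Cauchy--Schwarz estimate. The two points needing attention are the Fubini/Tonelli interchanges and the a priori membership $V_gf\in L^2(\Z^n\times\T^n)$ — both covered by the displayed estimate — and the precise meaning of the vector-valued integral and the series in the inversion formula, which the tested-against-$h$ formulation supplies automatically. The likeliest slip is the bookkeeping of complex conjugates, in particular keeping straight $\langle g_2,g_1\rangle$ versus $\langle g_1,g_2\rangle$ in \eqref{eq18}.
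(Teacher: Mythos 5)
Your proposal is correct and follows essentially the same route as the paper: identifying $V_gf(m,\cdot)=\mathcal F_{\Z^n}(f\cdot T_m\overline g)$ and applying Parseval for each fixed $m$ to prove (1), specializing to the diagonal for (2), and testing the reconstruction against an arbitrary $h\in\ell^2(\Z^n)$ and invoking (1) for (3). The only difference is that you make explicit the Cauchy--Schwarz/Tonelli justification for interchanging the sums, which the paper leaves implicit; your conjugate bookkeeping ($\langle g_2,g_1\rangle$ rather than $\langle g_1,g_2\rangle$) matches the stated identity.
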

\begin{proof}
(1) Note that we can write $V_g f(m,w)=\mathcal{F}_{\Z^n} (f \cdot T_m\overline{g})(w)$. Using Parseval's formula, we obtain
\begin{eqnarray*}
\left\langle V_{g_1} f_1, V_{g_2} f_2 \right\rangle_{L^2(\Z^n \times \T^n)}
&=& \sum_{m \in \Z^n} \int_{\T^n} V_{g_1} f_1(m,w) \; \overline{V_{g_2} f_2(m,w)} \; dw    \\
&=& \sum_{m \in \Z^n} \int_{\T^n} \mathcal{F}_{\Z^n} (f_1 \cdot T_m\overline{g}_1)(w) \; \overline{\mathcal{F}_{\Z^n} (f_2 \cdot T_m\overline{g}_2)(w)} \; dw  \\
&=& \sum_{m \in \Z^n} \sum_{k \in \Z^n}  (f_1 \cdot T_m\overline{g}_1)(k) \; \overline{(f_2 \cdot T_m\overline{g}_2)(k)} \\
&=& \sum_{m \in \Z^n} \sum_{k \in \Z^n} f_1(k) \;  \overline{g_1(k-m)} \; \overline{f_2(k)}\; g_2(k-m) \\
&=& \sum_{k \in \Z^n} f_1(k) \; \overline{f_2(k)} \sum_{m \in \Z^n} g_2(m)\; \overline{g_1(m)} \\
&=& \langle f_1, f_2 \rangle_{\ell^2(\Z^n)} \; \langle g_2, g_1 \rangle_{\ell^2(\Z^n)}.
\end{eqnarray*}

(2) By taking $g_1=g_2=g$ and $f_1=f_2=f$ in the relation (\ref{eq18}), we obtain the Plancherel formula (\ref{eq17}).

(3) Since $V_g f \in L^2\left(\Z^n \times \T^n  \right)$, the integral 
\[\tilde{f}=\frac{1}{\langle\gamma, g\rangle_{\ell^2(\Z^n)}} \sum_{m \in \Z^n} \int_{\T^n} V_g f(m,w) \; M_w T_m \gamma  \; dw\]
is a well-defined function in $\ell^2(\Z^n)$. Also, for any $h \in \ell^2(\Z^n)$, using the orthogonality relations, we obtain that
\begin{eqnarray*}
\langle \tilde{f}, h \rangle_{\ell^2(\Z^n)}
& = & \frac{1}{\langle\gamma, g\rangle_{\ell^2(\Z^n)}} \sum_{m \in \Z^n} \int_{\T^n} V_g f(m,w) \; \overline{\left\langle h, M_w T_m \gamma \right\rangle}_{\ell^2(\Z^n)} \;dw \\
& = & \frac{1}{\langle\gamma, g\rangle_{\ell^2(\Z^n)}} \left\langle V_g f, V_{\gamma} h \right\rangle_{L^2\left(\Z^n \times \T^n \right)} =\langle f, h\rangle_{\ell^2(\Z^n)}.   
\end{eqnarray*}
Thus $\tilde{f}=f$, and we obtain the inversion formula.
\end{proof}

\section{Modulation spaces on $\Z^n$}\label{sec3}
The modulation spaces were introduced by Feichtinger \cite{fei03, fei97}, by imposing integrability conditions on the STFT of tempered distributions. Here we define the modulation spaces on $\Z^n$ using the STFT on $\Z^n \times \T^n$. 
\begin{Def}
Fix a non-zero window $g \in \mathcal{S}(\Z^n)$, and $1 \leq p \leq \infty$. Then the modulation space $M^{p}(\Z^n)$  consists of all tempered distributions $f \in \mathcal{S'}(\Z^n)$ such that $V_g f \in L^{p}(\Z^n \times \T^n)$. The norm on $M^{p}(\Z^n)$ is 
\begin{eqnarray*}
\Vert f \Vert_{M^{p}(\Z^n)} 
= \Vert V_g f \Vert_{L^{p}(\Z^n \times \T^n)} 
= \bigg( \sum_{m \in \Z^n} \int_{\T^n} |V_g f(m,w)|^p \; dw \bigg)^{1/p} < \infty,
\end{eqnarray*}
with the usual adjustments if $p$ is infinite.  
\end{Def}
The definition of $M^{p}(\Z^n)$ is independent of the choice of $g$ in the sense that each different choice of $g$ defines an equivalent norm on $M^{p}(\Z^n)$. Each modulation space is a Banach space. For $p=2$, we have that $M^2(\Z^n) =\ell^2(\Z^n).$ For other $p$, the space $M^p(\Z^n)$ is not $\ell^p(\Z^n)$. In fact for $p>2$, the space $M^p(\Z^n)$ is a superset of $\ell^2(\Z^n)$. Here we collect some basic properties and inclusion relations of modulation spaces on $\Z^n$. These results can be obtained using simple modifications of the corresponding proofs for the case of modulation spaces defined on $\mathbb R^n$. We define the space of special windows $\mathcal{S}_{\mathcal{C}}(\Z^n)$ by 
\begin{eqnarray*}
&& \mathcal{S}_{\mathcal{C}} \left(\Z^n\right)=\left\{ f \in \ell^{2} (\Z^n) :  \; f=V_{g}^{*} F= \sum_{m \in \Z^n} \int_{\T^n} F(m, w)\; M_{w} T_{m} g  \; dw, \right. \\
&& \left. \qquad \qquad  \qquad \qquad  \qquad  \text{where} \; F \in L^{\infty} (\Z^n \times \T^n) \; \text{and} \; \supp F  \; \text{is compact} \right\}.
\end{eqnarray*}
Then $\mathcal{S}_{\mathcal{C}}\left(\Z^n\right) \subseteq \mathcal{S}\left(\Z^n\right)$ and $\mathcal{S}_{\mathcal{C}}\left(\Z^n \right)$ is dense in $M^1(\Z^n)$. 
Let $B$ be a Banach space of tempered distributions with the following properties: (1) $B$ is invariant under time-frequency shifts, and $\left\|T_{m} M_{w} f\right\|_{B} \leq C  \|f\|_{B}$ for all $f \in B$, (2) $M^1(\Z^n) \cap B \neq\{0\}$. Then $M^1(\Z^n)$ is embedded in $B$ (see \cite{gro01}, Theorem 12.1.9). Also, $M^p(\Z^n)$ is invariant under time-frequency shifts and $\left\|T_{m} M_{w} f\right\|_{M^p(\Z^n)} \leq C  \|f\|_{M^p(\Z^n)}$.  
Since $\mathcal{S}_{\mathcal{C}}\left( \Z^n \right) \subseteq M^1(\Z^n) \cap M^p(\Z^n)$, using a similar method as in Corollary 12.1.10 in \cite{gro01}, we obtain the following inclusions
\[ \mathcal{S}(\Z^n) \subset M^1(\Z^n) \subset M^2(\Z^n)=\ell^2(\Z^n) \subset M^\infty(\Z^n) \subset \mathcal{S'}(\Z^n).\]
In particular, we have $M^p(\Z^n) \hookrightarrow \ell^p(\Z^n)$ for $1 \leq p \leq 2$, and  $\ell^p(\Z^n) \hookrightarrow M^p(\Z^n)$ for $2 \leq p \leq \infty$. Furthermore, the dual of a modulation space is also a modulation space, if $p < \infty$, $(M^{p}(\Z^n))^{'} =M^{p'}(\Z^n)$, where $p'$ is the conjugate exponent of $p$. Next, we present convolution relations which we will use in the proof of the main results. 
\begin{proposition}\label{pro6}
Let $F \in L^1(\Z^n \times \T^n)$ and $G \in L^p(\Z^n \times \T^n)$. Then 
\begin{equation}\label{eq01}
\|F * G\|_{L^p(\Z^n \times \T^n)} \leq \|F\|_{L^1(\Z^n \times \T^n)} \; \|G\|_{L^p(\Z^n \times \T^n)}.
\end{equation}
\end{proposition}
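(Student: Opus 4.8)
The plan is to read Proposition~\ref{pro6} as the classical Young convolution inequality on the locally compact abelian group $\Z^n \times \T^n$, whose Haar measure is the product of counting measure on $\Z^n$ with Lebesgue measure on $\T^n$; concretely, the convolution here is
\[ (F*G)(k,w) = \sum_{m \in \Z^n} \int_{\T^n} F(m,u)\, G(k-m,\, w-u)\, du, \]
so that $(F*G)(z) = \int F(y)\, G(z-y)\, dy$ with $z=(k,w)$ and $y=(m,u)$ ranging over $\Z^n \times \T^n$ against this Haar measure. The two structural facts I will rely on are that translation acts isometrically on $L^p(\Z^n \times \T^n)$ --- immediate from translation invariance of counting measure on $\Z^n$ and of Lebesgue measure on the compact group $\T^n$ --- and that, after passing to absolute values, all integrands are nonnegative, so Tonelli's theorem licenses every interchange of the sum over $\Z^n$ with the integral over $\T^n$.

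First I would obtain the bound directly from Minkowski's integral inequality, viewing $z \mapsto \int F(y)\, G(z-y)\, dy$ as an $L^p(\Z^n \times \T^n)$-valued integral of the translates $z \mapsto G(z-y)$ weighted by $|F(y)|$:
\[ \|F*G\|_{L^p(\Z^n \times \T^n)} \le \int |F(y)|\, \big\| G(\cdot - y)\big\|_{L^p(\Z^n \times \T^n)}\, dy = \|G\|_{L^p(\Z^n \times \T^n)} \int |F(y)|\, dy = \|F\|_{L^1(\Z^n \times \T^n)}\, \|G\|_{L^p(\Z^n \times \T^n)}, \]
where the first equality is exactly the isometry property noted above. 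If one prefers to keep everything elementary in the mixed sum--integral setting, the same bound follows from the classical H\"older argument: factor $|F(y)| = |F(y)|^{1/p'}\,|F(y)|^{1/p}$ inside $(F*G)(z)$, apply H\"older in $y$ with exponents $p'$ and $p$, raise to the $p$-th power, integrate in $z$ over $\Z^n \times \T^n$, interchange the $z$- and $y$-integrations by Tonelli, and use translation invariance to evaluate $\int |G(z-y)|^p\, dz = \|G\|_{L^p(\Z^n \times \T^n)}^p$. The endpoint $p = \infty$ is handled separately and more easily, since $|(F*G)(z)| \le \|G\|_{L^\infty(\Z^n \times \T^n)} \int |F(y)|\, dy$ holds pointwise.

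I do not anticipate a genuine obstacle; the argument is entirely standard. The only points that need a little care are the almost-everywhere well-definedness of $F*G$ on $\Z^n \times \T^n$ (which drops out of the Tonelli estimate used in the H\"older route), the bookkeeping of the Fubini/Tonelli interchanges between the $\Z^n$-summation and the $\T^n$-integration in both the $z$- and $y$-variables, and treating $p=\infty$ via the separate pointwise bound rather than the H\"older computation.
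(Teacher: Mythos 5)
Your proposal is correct. Your primary route---Minkowski's integral inequality applied to the translates $z \mapsto G(z-y)$ weighted by $|F(y)|$, together with the isometry of translation on $L^p(\Z^n \times \T^n)$---is in substance the same argument the paper gives, just packaged differently: the paper does not cite Minkowski's inequality as a black box but instead reproves it in this setting by pairing $F*G$ against a test function $H \in L^{p'}(\Z^n \times \T^n)$, applying Tonelli to swap the $(m,w)$- and $(l,x)$-variables, using H\"older and the identity $\Vert T_{(l,x)} G \Vert_{L^p} = \Vert G \Vert_{L^p}$, and then taking the supremum over $\Vert H\Vert_{L^{p'}} \leq 1$. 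So the duality computation in the paper is exactly the standard proof of the lemma you invoke. Your alternative H\"older-factorization route ($|F| = |F|^{1/p'}|F|^{1/p}$) is a genuinely different, equally standard proof that avoids duality altogether; and your separate pointwise treatment of $p=\infty$ is a sensible precaution that the paper glosses over, since its duality formula is cleanest for $p < \infty$. Either of your two routes would be accepted; no gap.
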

\begin{proof}
Let $H \in  L^{p^{\prime}}(\Z^n \times \T^n)$. Using H\"older's inequality, we get 
\begin{eqnarray*}
|\langle F * G, H \rangle| 
& = & \left| \sum_{m \in \Z^n}  \int_{\T^n} F*G(m,w) \; \overline{H(m,w)} \; dw \right| \\
& \leq & \sum_{m \in \Z^n}  \int_{\T^n} \left( \sum_{l \in \Z^n}  \int_{\T^n} \left| G(m-l, w-x) \right|\; |F(l,x)| \; dx \right) |H(m,w)| \; dw \\
& = &  \sum_{l \in \Z^n}  \int_{\T^n} \left( \sum_{m \in \Z^n}  \int_{\T^n} \left| T_{(l,x)} G(m, w) \right|\; |H(m,w)| \; dw \right) |F(l,x)| \; dx  \\
& \leq & \sum_{l \in \Z^n}  \int_{\T^n}  |F(l,x)| \; \| T_{(l,x)} G \|_{L^p(\Z^n \times \T^n)} \; \|H\|_{L^{p^{\prime}}(\Z^n \times \T^n)} \; dx \\
& = &  \sum_{l \in \Z^n}  \int_{\T^n}  |F(l,x)| \; dx \; \| G \|_{L^p(\Z^n \times \T^n)} \; \|H\|_{L^{p^{\prime}}(\Z^n \times \T^n)} \\
& = & \|F\|_{L^1(\Z^n \times \T^n)} \; \| G \|_{L^p(\Z^n \times \T^n)} \; \|H\|_{L^{p^{\prime}}(\Z^n \times \T^n)}.
\end{eqnarray*}
By duality, we have
\[ \|F * G\|_{L^p(\Z^n \times \T^n)} = \sup\left\{|\langle F * G, H \rangle|:\|H\|_{L^{p^{\prime}}(\Z^n \times \T^n)}  \leq 1 \right\} 
\leq \|F\|_{L^1(\Z^n \times \T^n)} \; \|G\|_{L^p(\Z^n \times \T^n)}. \]
\end{proof}

Next, we define the modulation spaces on $\Z^n \times \T^n$ using the STFT on $(\Z^{n} \times \T^{n}) \times (\Z^{n} \times \T^{n})^{\wedge}$. For $g \in \mathcal{S}\left(\mathbb{Z}^n \times \mathbb{T}^n\right) \backslash\{0\}$ and $1 \leq p \leq \infty$, the modulation space $M^{p}(\Z^n \times \T^n)$ consists of all tempered distributions $F \in \mathcal{S'}\left(\mathbb{Z}^n \times \mathbb{T}^n\right) $  such that $V_g F \in L^p((\Z^{n} \times \T^{n}) \times (\Z^{n} \times \T^{n})^{\wedge})$. The norm on $M^{p}(\Z^n \times \T^n)$ is defined by
$\Vert F \Vert_{M^{p}(\Z^n \times \T^n)}  = \Vert V_g F \Vert_{L^p((\Z^{n} \times \T^{n}) \times (\Z^{n} \times \T^{n})^{\wedge})} $. Let $m, k \in \mathbb{Z}^n$ and $\omega, \xi \in \mathbb{T}^n$. The STFT $V_g F$ on $(\Z^{n} \times \T^{n}) \times (\Z^{n} \times \T^{n})^{\wedge}$ is defined as 
\[ V_g F (m, \omega, \xi, k)=\sum_{j \in \mathbb{Z}^n} \int_{\mathbb{T}^n} e^{-2 \pi i j \xi} e^{-2 \pi i \eta k} F(j, \eta) \overline{g(j-m, \eta-\omega)} d \eta. \]
Here $(m, \omega) \in \mathbb{Z}^n \times \mathbb{T}^n$ and $(\xi, k) \in \mathbb{T}^n \times \mathbb{Z}^n=\left(\mathbb{Z}^n \times \mathbb{T}^n\right)^{\wedge}$. For $j \in \mathbb{Z}^n, \eta \in \mathbb{T}^n$, we define $\tilde{g}(j, \eta) =g(-j,-\eta)$. Then, we can write the STFT $V_g F$ as 
\[ \begin{aligned}
V_g F(m, \omega, \xi, k) & =\sum_{j \in \mathbb{Z}^n} \int_{\mathbb{T}^n} e^{-2 \pi i m \xi} e^{-2 \pi i \omega k} F(j, \eta) e^{2 \pi i(m-j) \xi} e^{2 \pi i(\omega-\eta) k} \overline{\tilde{g}(m-j, \omega-\eta)} d \eta \\
& =e^{-2 \pi i m \xi} e^{-2 \pi i \omega k}\left(F * M_{(\xi, k)} \overline{\tilde{g}}\right)(m, \omega),
\end{aligned} \]
where $M_{(\xi, k)}$ is the modulation operator, that is given by
\[ \left(M_{(\xi, k)} h\right)(j, \eta)=e^{2 \pi i j \xi} e^{2 \pi i k \eta} h(j, \eta), \quad j, k \in \mathbb{Z}^n, \;\;  \xi, \eta \in \mathbb{T}^n. \] 
Similarly, the inclusion relations of modulation spaces on $\Z^n \times \T^n$ can be obtained using the same techniques as in modulation spaces on locally compact abelian groups (see \cite{bas22, fei03}). We refer to Gr\"ochenig's book \cite{gro01} for further properties and uses of modulation spaces.

\section{Localization operators on discrete modulation spaces}\label{sec4}

In this section, we define the localization operators on $\Z^n$ and we show that these operators are bounded. Also, we prove that localization operators are compact and in the Schatten--von Neumann class. 

\begin{Def}
Let $\varsigma \in L^1(\Z^n \times \T^n)  \cup  L^\infty(\Z^n \times \T^n)$. The localization operator associated with the symbol $\varsigma$ and two window functions $g_1$ and $g_2$ in $\mathcal{S} (\mathbb Z^n)$, is denoted by  $\mathfrak{L}^{g_1, g_2}_{\varsigma}$, and defined on $\ell^2(\Z^n)$, by
\begin{equation}\label{eq41}
\mathfrak{L}^{g_1, g_2}_{\varsigma}f(k)=\sum_{m \in \Z^n}  \int_{\T^n} \varsigma(m, w) \; V_{g_1}f(m,w) \; M_w T_m g_2(k) \; dw, \quad k \in \Z^n. 
\end{equation}  
Also, it is useful to rewrite the definition of $\mathfrak{L}^{g_1, g_2}_{\varsigma}$ in a weak sense as, for every $f,  h \in \ell^2(\Z^n)$ 
\begin{equation}\label{eq42}
\left\langle \mathfrak{L}^{g_1, g_2}_{\varsigma}f , h \right\rangle_{\ell^2(\Z^n)} = \sum_{m \in \Z^n}  \int_{\T^n} \varsigma(m, w) \; V_{g_1}f(m,w) \; \overline{V_{g_2}h(m,w)} \; dw.
\end{equation}
\end{Def}

We denote by $\mathcal{B}(\ell^p(\Z^n))$, $1 \leq p \leq \infty$, the space of all bounded linear operators from $\ell^p(\Z^n)$ into itself. In particular, $\mathcal{B}(\ell^2(\Z^n))$ denotes the C$^*$-algebra of bounded linear operator $\mathcal{A}$ from $\ell^2(\Z^n)$ into itself, equipped with the norm 
\[ \Vert \mathcal{A} \Vert_{\mathcal{B}(\ell^2(\Z^n))}= \sup_{\Vert f \Vert_{\ell^2(\Z^n)} \leq 1} \Vert \mathcal{A}(f) \Vert_{\ell^2(\Z^n)}. \]

Next, we define the Schatten--von Neumann class $S_p$ on $\Z^n$. For a compact operator $\mathcal{A} \in \mathcal{B}(\ell^2(\Z^n))$, the eigenvalues of the positive self-adjoint operator $|\mathcal{A}|=\sqrt{\mathcal{A}^* \mathcal{A}}$ are called the singular values of $\mathcal{A}$ and denoted by $\{ s_n(\mathcal{A}) \}_{n \in \mathbb{N}}$. For $1 \leq p < \infty$, the Schatten--von Neumann class $S_p$ is defined to be the space of all compact operators whose singular values lie in $\ell^p$. $S_p$ is equipped with the norm 
\[ \Vert \mathcal{A} \Vert_{S_p}= \left(\sum_{n=1}^\infty  (s_n(\mathcal{A}))^p \right)^{1/p}. \]
For $p=\infty$,  the Schatten--von Neumann class $S_\infty$ is the class of all compact operators with the norm  $\Vert \mathcal{A} \Vert_{S_\infty} :=\Vert \mathcal{A} \Vert_{\mathcal{B}(\ell^2(\Z^n))}$. In particular, for $p = 1$, we define the trace of an operator $\mathcal{A}$ in $S_1$ by 
\[ tr(\mathcal{A}) = \sum_{n=1}^\infty \langle \mathcal{A}  v_n, v_n  \rangle_{\ell^2(\Z^n)}, \]
where $\{v_n \}_n$ is any orthonormal basis of $\ell^2(\Z^n)$. Moreover, if $\mathcal{A}$ is positive, then 
\[ tr(\mathcal{A})=\Vert \mathcal{A} \Vert_{S_1} .\]
A compact operator $\mathcal{A}$ on the Hilbert space $\ell^2(\Z^n)$ is called Hilbert--Schmidt, if the positive operator $\mathcal{A}^*  \mathcal{A}$ is in the trace class $S_1$. Then for any orthonormal basis $\{v_n \}_n$ of $\ell^2(\Z^n)$, we have 
\[ \Vert \mathcal{A}\Vert_{HS}^2 :=\Vert \mathcal{A}\Vert_{S_2}^2= \Vert \mathcal{A}^* \mathcal{A} \Vert_{S_1}= tr(\mathcal{A}^* \mathcal{A}) = \sum_{n=1}^\infty \Vert \mathcal{A} v_n\Vert^2_{\ell^2(\Z^n)}. \]

\subsection{Boundedness and compactness of localization operators}

In this subsection, we consider window functions $g_1, g_2 \in  M^1(\Z^n)$ and establish the following boundedness and compactness results of localization operators. 

\begin{proposition}\label{pro01}
Let $\varsigma \in L^\infty(\Z^n \times \T^n)$ and $g_1, g_2 \in  M^1(\Z^n)$. Then the localization operator $\mathfrak{L}^{g_1, g_2}_{\varsigma}$ is in $\mathcal{B}(\ell^2(\Z^n))$ and we have
\[ \left\Vert \mathfrak{L}^{g_1, g_2}_{\varsigma} \right\Vert_{\mathcal{B}(\ell^2(\Z^n))} \leq \Vert \varsigma \Vert_{L^\infty(\Z^n \times \T^n)} \; \Vert g_1 \Vert_{M^1(\Z^n)} \; \Vert g_2 \Vert_{M^1(\Z^n)}. \]
\end{proposition}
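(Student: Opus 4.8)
The plan is to estimate the bilinear form $\langle \mathfrak{L}^{g_1,g_2}_\varsigma f, h\rangle$ directly from the weak formulation \eqref{eq42} and then recover the operator norm by duality. The point is that $\varsigma \in L^\infty$ pulls out in sup-norm, leaving a pairing of $V_{g_1}f$ against $V_{g_2}h$ which I want to control by $\|f\|_{\ell^2}\|h\|_{\ell^2}$ with the window dependence showing up as $\|g_1\|_{M^1}\|g_2\|_{M^1}$. The mechanism that makes the $M^1$-norms appear (rather than, say, $\ell^2$-norms of the windows, which would need $\varsigma \in L^2$) is that $V_{g_i}$ maps $\ell^2(\Z^n)$ boundedly into $L^\infty(\Z^n\times\T^n)$ whenever $g_i \in M^1(\Z^n)$, with $\|V_g f\|_{L^\infty} \le \|g\|_{M^1}\|f\|_{\ell^2}$. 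Since the excerpt does not state this mapping property explicitly, I would either record it as a short lemma or derive it on the spot from the covariance of the STFT together with the inclusion $M^1(\Z^n)\subset\ell^1(\Z^n)$ and the density of $\mathcal{S}_{\mathcal{C}}$.

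First I would write, for $f,h\in\ell^2(\Z^n)$,
\[
\left|\left\langle \mathfrak{L}^{g_1,g_2}_\varsigma f,h\right\rangle_{\ell^2(\Z^n)}\right|
\le \|\varsigma\|_{L^\infty(\Z^n\times\T^n)}\sum_{m\in\Z^n}\int_{\T^n}|V_{g_1}f(m,w)|\,|V_{g_2}h(m,w)|\,dw,
\]
so the right-hand sum is exactly $\big\langle |V_{g_1}f|,|V_{g_2}h|\big\rangle_{L^2(\Z^n\times\T^n)}$, which by Cauchy--Schwarz is at most $\|V_{g_1}f\|_{L^2}\|V_{g_2}h\|_{L^2}$. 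But that route invokes Plancherel \eqref{eq17} and only works for $g_i\in\ell^2$ with $\varsigma\in L^2$; to get the $L^\infty$ symbol bound I instead estimate one factor in $L^1$ and the other in $L^\infty$: bound the sum by $\|V_{g_1}f\|_{L^1(\Z^n\times\T^n)}\|V_{g_2}h\|_{L^\infty(\Z^n\times\T^n)}$. Hmm --- that forces $f\in M^1$, not $\ell^2$. The clean symmetric choice is to use, for each fixed $m$, the pointwise Cauchy--Schwarz in $w$, but the decisive estimate is the $L^2\times L^\infty$ split applied after noting $\|V_{g_2}h\|_{L^\infty}\le \|g_2\|_{M^1}\|h\|_{\ell^2}$: write the sum as $\le \|V_{g_2}h\|_{L^\infty}\,\|V_{g_1}f\|_{L^1}$ only if $V_{g_1}f\in L^1$, which again needs $f\in M^1$. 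So the genuinely correct bookkeeping is the one that keeps both $f,h$ in $\ell^2$: estimate $\sum_m\int |V_{g_1}f|\,|V_{g_2}h|\le \|V_{g_1}f\|_{L^2}\,\|V_{g_2}h\|_{L^2}$ and then --- this is the subtle point --- observe that for $g\in M^1(\Z^n)$ one has $\|V_g f\|_{L^2(\Z^n\times\T^n)}\le \|g\|_{M^1(\Z^n)}\|f\|_{\ell^2(\Z^n)}$, which follows by interpolating, or more directly from $|V_gf(m,w)|=|(f*\overline{M_wg})(m)|$, Young's inequality $\|f*\overline{M_wg}\|_{\ell^2(\Z^n)}\le\|f\|_{\ell^2}\|g\|_{\ell^1}$, and then integrating in $w$ using $\|g\|_{\ell^1}\le\|g\|_{M^1}$ together with the fact that $\T^n$ has finite measure --- wait, that over-counts the $w$-integral. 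The honest statement I will prove and use is: $V_g:\ell^2(\Z^n)\to L^2(\Z^n\times\T^n)$ with norm $\le\|g\|_{\ell^2}$ by Plancherel, and $V_g:\ell^2(\Z^n)\to L^\infty(\Z^n\times\T^n)$ with norm $\le\|g\|_{M^1}$ by the inclusion $M^1\hookrightarrow\ell^2$ together with Cauchy--Schwarz in the definition of $V_gf(m,w)=\langle f,M_wT_mg\rangle$ and $\|M_wT_mg\|_{\ell^2}=\|g\|_{\ell^2}\le\|g\|_{M^1}$ --- but I actually want the pairing bound directly. The cleanest path, which I will take: put one STFT in $L^2$ and the other in $L^2$ but \emph{choose which factor gets the $M^1$ estimate by using $\|V_{g_i} u\|_{L^2}\le \|g_i\|_{\ell^2}\|u\|_{\ell^2}$ for both} --- this gives $\|g_1\|_{\ell^2}\|g_2\|_{\ell^2}$, not $\|g_1\|_{M^1}\|g_2\|_{M^1}$, so it is the wrong constant. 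I therefore commit to the $L^1$--$L^\infty$ duality split applied to the \emph{kernel}, not to $f$ and $h$: rewrite $\mathfrak{L}^{g_1,g_2}_\varsigma$ as superposition of rank-one operators $u\mapsto \langle u, M_wT_mg_1\rangle M_wT_mg_2$, estimate each in operator norm by $\|M_wT_mg_1\|\,\|M_wT_mg_2\|$, and integrate against $|\varsigma|$ --- still gives $\ell^2$ norms. The resolution is that $\|g_i\|_{M^1}$ genuinely enters only through $\|V_{g_2}h\|_{L^\infty}\le C\|g_2\|_{M^1}\|h\|_{\ell^2}$ \emph{combined with} $\|V_{g_1}f\|_{L^1_mL^1_w}$... which is not finite for general $f\in\ell^2$.

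Given that the naive splittings do not land on the stated constant, the approach I will actually write up is the standard Feichtinger-algebra argument: decompose $\varsigma\in L^\infty$, and use that $\mathfrak{L}^{g_1,g_2}_\varsigma$ factors as $\mathfrak{L}^{g_1,g_2}_\varsigma = V_{g_2}^*\, M_\varsigma\, V_{g_1}$, where $M_\varsigma$ is pointwise multiplication by $\varsigma$ on $L^2(\Z^n\times\T^n)$, $V_{g_1}:\ell^2(\Z^n)\to L^2(\Z^n\times\T^n)$, and $V_{g_2}^*:L^2(\Z^n\times\T^n)\to\ell^2(\Z^n)$. This factorization is immediate from \eqref{eq42} and the adjoint relation for $V_{g_2}$ used in the inversion formula. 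Then $\|\mathfrak{L}^{g_1,g_2}_\varsigma\|_{\mathcal{B}(\ell^2)}\le\|V_{g_2}^*\|\,\|M_\varsigma\|\,\|V_{g_1}\|$ with $\|M_\varsigma\|=\|\varsigma\|_{L^\infty}$, so it remains to show $\|V_{g}\|_{\ell^2\to L^2}\le\|g\|_{M^1}$ and likewise for the adjoint. The hard step --- and the one I would spend real care on --- is precisely this bound $\|V_g\|_{\ell^2(\Z^n)\to L^2(\Z^n\times\T^n)}\le C\|g\|_{M^1(\Z^n)}$ for $g\in M^1(\Z^n)$; Plancherel gives the cleaner $\|g\|_{\ell^2}$ and, since $M^1\hookrightarrow\ell^2$ with $\|g\|_{\ell^2}\le C\|g\|_{M^1}$ (an inclusion recorded in Section \ref{sec3}), the $M^1$-bound follows with an absolute constant $C$ absorbed appropriately --- and in the normalization where the embedding constant is $1$ one recovers exactly the displayed inequality. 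I will therefore: (i) record the factorization $\mathfrak{L}^{g_1,g_2}_\varsigma=V_{g_2}^*M_\varsigma V_{g_1}$; (ii) bound each factor, citing Plancherel \eqref{eq17} for $V_{g_1}$ and its adjoint and the chain of inclusions from Section \ref{sec3} to pass from $\|g_i\|_{\ell^2}$ to $\|g_i\|_{M^1}$; (iii) multiply. The main obstacle is keeping the constants honest through the $\ell^2$-to-$M^1$ passage, since the factorization argument most naturally produces $\|g_1\|_{\ell^2}\|g_2\|_{\ell^2}$ and one must invoke the (norm-one, in the chosen normalization) embedding $M^1(\Z^n)\hookrightarrow\ell^2(\Z^n)$ to reach the stated form.
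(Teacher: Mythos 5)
Your final argument---the factorization $\mathfrak{L}^{g_1,g_2}_{\varsigma}=V_{g_2}^{*}M_{\varsigma}V_{g_1}$ with $\|M_\varsigma\|=\|\varsigma\|_{L^\infty}$, Plancherel giving $\|V_{g_i}\|_{\ell^2\to L^2}=\|g_i\|_{\ell^2}$, and the embedding $M^1(\Z^n)\hookrightarrow\ell^2(\Z^n)$ to pass to the $M^1$-norms---is correct and is exactly the paper's proof, which phrases the same estimate as a bilinear Cauchy--Schwarz bound on $\langle\mathfrak{L}^{g_1,g_2}_{\varsigma}f,h\rangle$ followed by Plancherel and $\|g_i\|_{\ell^2}\le\|g_i\|_{M^1}$. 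The lengthy false starts with $L^1$--$L^\infty$ splittings should be cut, and your worry about the embedding constant is shared (silently) by the paper, which likewise asserts $\|g\|_{\ell^2}\le\|g\|_{M^1}$ without further justification.
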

\begin{proof}
For every $f, h \in \ell^2(\Z^n)$, using
H\"older's inequality,  we obtain
\begin{eqnarray*}
\left| \left\langle \mathfrak{L}^{g_1, g_2}_{\varsigma}f, h \right\rangle_{\ell^2(\Z^n)} \right| 
& \leq & \sum_{m \in \Z^n}  \int_{\T^n} |\varsigma(m, w)| \; \left|V_{g_1}f(m,w) \right|\; \left|V_{g_2}h(m,w) \right| \; dw \\ 
& \leq & \left\Vert \varsigma \right\Vert_{L^\infty(\Z^n \times \T^n)} \left\Vert V_{g_1}f  \right\Vert_{L^2(\Z^n \times \T^n)} \left\Vert V_{g_2}h \right\Vert_{L^2(\Z^n \times \T^n)}.
\end{eqnarray*}
Using Plancherel's formula (\ref{eq17}), we get 
\begin{eqnarray*}
\left| \left\langle \mathfrak{L}^{g_1, g_2}_{\varsigma}f, h \right\rangle_{\ell^2(\Z^n)} \right| 
\leq  \left\Vert \varsigma \right\Vert_{L^\infty(\Z^n \times \T^n)} \Vert f \Vert_{\ell^2(\Z^n)} \; \Vert g_1 \Vert_{\ell^2(\Z^n)} \; \Vert h  \Vert_{\ell^2(\Z^n)} \; \Vert g_2 \Vert_{\ell^2(\Z^n)}.
\end{eqnarray*}
Since $M^1(\Z^n) \subset \ell^2(\Z^n)$, we have 
\[\Vert g_1 \Vert_{\ell^2(\Z^n)} \leq \Vert g_1 \Vert_{M^1(\Z^n)} \quad \text{and} \quad  \Vert g_2  \Vert_{\ell^2(\Z^n)} \leq \Vert g_2 \Vert_{M^1(\Z^n)}. \]
Hence,
\[ \left\Vert \mathfrak{L}^{g_1, g_2}_{\varsigma} \right\Vert_{\mathcal{B}(\ell^2(\Z^n))} \leq \Vert \varsigma \Vert_{L^\infty(\Z^n \times \T^n)}\; \Vert g_1 \Vert_{M^1(\Z^n)} \; \Vert g_2 \Vert_{M^1(\Z^n)}. \]
\end{proof}

\begin{proposition}\label{pro1}
Let $\varsigma \in M^1(\Z^n \times \T^n)$ and $g_1, g_2 \in  M^1(\Z^n)$. Then the localization operator $\mathfrak{L}^{g_1, g_2}_{\varsigma}$ is in $\mathcal{B}(\ell^2(\Z^n))$ and we have 
$$\left\Vert \mathfrak{L}^{g_1, g_2}_{\varsigma} \right\Vert_{\mathcal{B}(\ell^2(\Z^n))} \leq \Vert \varsigma \Vert_{M^1(\Z^n \times \T^n)} \; \Vert g_1 \Vert_{M^1(\Z^n)} \; \Vert g_2 \Vert_{M^1(\Z^n)}. $$
\end{proposition}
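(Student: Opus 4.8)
The plan is to deduce the estimate from Proposition~\ref{pro01} via the continuous embedding $M^{1}(\Z^n\times\T^n)\hookrightarrow L^{\infty}(\Z^n\times\T^n)$. Once this embedding is available, every $\varsigma\in M^{1}(\Z^n\times\T^n)$ is in particular an $L^{\infty}$-symbol, so $\mathfrak{L}^{g_1,g_2}_{\varsigma}$ is exactly the operator treated in Proposition~\ref{pro01}, and
\[
\begin{aligned}
\left\Vert\mathfrak{L}^{g_1,g_2}_{\varsigma}\right\Vert_{\mathcal{B}(\ell^2(\Z^n))}
&\leq \Vert\varsigma\Vert_{L^{\infty}(\Z^n\times\T^n)}\,\Vert g_1\Vert_{M^1(\Z^n)}\,\Vert g_2\Vert_{M^1(\Z^n)}\\
&\leq \Vert\varsigma\Vert_{M^1(\Z^n\times\T^n)}\,\Vert g_1\Vert_{M^1(\Z^n)}\,\Vert g_2\Vert_{M^1(\Z^n)},
\end{aligned}
\]
the last inequality being the embedding estimate, with the window defining $M^{1}(\Z^n\times\T^n)$ normalized so that its constant is $\leq 1$. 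Thus the whole content of the proposition sits in the embedding.

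To obtain the embedding I would first transcribe to the group $\Z^n\times\T^n$ (whose dual is $\T^n\times\Z^n$) the short-time Fourier transform together with the orthogonality and inversion formulas of Section~\ref{sec2}. The inversion formula then represents any $\varsigma$ as a reproducing sum--integral $\varsigma=\sum\!\int (V_{\Phi}\varsigma)(z,\zeta)\,M_{\zeta}T_{z}\Phi\,dz\,d\zeta$ of time-frequency shifts of a fixed nonzero window $\Phi\in M^{1}(\Z^n\times\T^n)$, with $(z,\zeta)$ ranging over $(\Z^n\times\T^n)\times(\T^n\times\Z^n)$. Estimating pointwise and using $|M_{\zeta}T_{z}\Phi|=|T_{z}\Phi|\leq\Vert\Phi\Vert_{L^{\infty}}$ yields $\Vert\varsigma\Vert_{L^{\infty}}\leq C\,\Vert V_{\Phi}\varsigma\Vert_{L^{1}}=C\,\Vert\varsigma\Vert_{M^{1}(\Z^n\times\T^n)}$; a convenient choice of $\Phi$ — for example the point mass at the origin of $\Z^n$ tensored with the constant function on $\T^n$, which belongs to $M^{1}(\Z^n\times\T^n)$ and satisfies $\Vert\Phi\Vert_{L^{\infty}}=\Vert\Phi\Vert_{L^{2}}=1$ — forces $C\leq1$. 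As in Section~\ref{sec3}, one must also check that the $M^{1}(\Z^n\times\T^n)$-norm is independent of the admissible window up to equivalence, so the statement does not depend on this choice.

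The main obstacle is therefore not the operator estimate, which is immediate from Proposition~\ref{pro01}, but the preliminary time-frequency bookkeeping on the product group $\Z^n\times\T^n$: constructing its STFT, orthogonality and inversion formulas, verifying window-independence of $M^{1}(\Z^n\times\T^n)$, and being careful enough with normalization that the embedding constant — hence the constant in the final bound — is exactly $1$ rather than merely finite. If one prefers not to invoke the general embedding $M^{1}\hookrightarrow L^{\infty}$, an equivalent route uses only that the single fixed window $\Phi$ lies in $L^{\infty}$: expand $\varsigma$ by the $\Z^n\times\T^n$ inversion formula, use linearity of $\varsigma\mapsto\mathfrak{L}^{g_1,g_2}_{\varsigma}$ to write $\mathfrak{L}^{g_1,g_2}_{\varsigma}=\sum\!\int (V_{\Phi}\varsigma)(z,\zeta)\,\mathfrak{L}^{g_1,g_2}_{M_{\zeta}T_{z}\Phi}\,dz\,d\zeta$, bound each summand uniformly by $\Vert\Phi\Vert_{L^{\infty}}\,\Vert g_1\Vert_{M^{1}(\Z^n)}\,\Vert g_2\Vert_{M^{1}(\Z^n)}$ via Proposition~\ref{pro01}, and integrate against $V_{\Phi}\varsigma\in L^{1}$; the only additional point is the norm-convergence of this operator-valued integral, which is clear since $V_{\Phi}\varsigma\in L^{1}$ and the integrand is uniformly bounded in $\mathcal{B}(\ell^2(\Z^n))$.
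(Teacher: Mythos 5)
Your argument is correct, but it takes a genuinely different route from the paper. The paper proves this proposition directly by pairing $\varsigma$ against $V_{g_1}f\cdot\overline{V_{g_2}h}$ via the $M^1$--$M^\infty$ duality on the symbol side, and then spends its effort estimating $\bigl\Vert V_{g_1}f\cdot\overline{V_{g_2}h}\bigr\Vert_{M^\infty(\Z^n\times\T^n)}$ through the chain $M^\infty\hookleftarrow M^1$, the convolution form of the STFT on the product group, Young's inequality, H\"older, and Plancherel, arriving at $\Vert f\Vert_{\ell^2}\Vert h\Vert_{\ell^2}\Vert g_1\Vert_{M^1}\Vert g_2\Vert_{M^1}$. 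You instead push all the work onto the single embedding $M^1(\Z^n\times\T^n)\hookrightarrow L^\infty(\Z^n\times\T^n)$ and then quote Proposition~\ref{pro01}; your sketch of that embedding via the inversion formula on the product group, with the window $\delta_0\otimes 1$ normalizing the constant to $1$, is sound (and your alternative operator-valued superposition argument is the standard ``Gabor multiplier'' decomposition, equally valid). The trade-off: your route is shorter and reuses an already-proved result, and the product-group STFT bookkeeping you need is no heavier than what the paper's proof already uses implicitly when it writes $V_g(V_{g_1}f\cdot\overline{V_{g_2}h})$ as a convolution on $(\Z^n\times\T^n)\times(\Z^n\times\T^n)^{\wedge}$; both proofs are equally dependent on a favourable window normalization to get the constant exactly $1$ rather than a window-dependent constant, and you are commendably explicit about this where the paper is silent. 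The one thing the paper's longer route buys is the intermediate estimate (\ref{eq46}) on $\bigl\Vert V_{g_1}f\cdot\overline{V_{g_2}h}\bigr\Vert_{M^1}$, which is then reused verbatim in the proof of Proposition~\ref{pro2}; if you adopt your proof, you would need to supply that estimate separately there.
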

\begin{proof}
Let $f , h \in \ell^2(\Z^n)$. Since $M^\infty(\Z^n \times \T^n)$ is the dual space of $M^1(\Z^n \times \T^n)$, we have 
\begin{eqnarray}\label{eq45}
\left| \left\langle \mathfrak{L}^{g_1, g_2}_{\varsigma}f, h \right\rangle_{\ell^2(\Z^n)} \right|   
& \leq & \sum_{m \in \Z^n}  \int_{\T^n} |\varsigma(m, w)| \; \left|V_{g_1}f(m, w) \; \overline{V_{g_2}h(m, w)} \right|  \; dw \nonumber \\
& \leq & \Vert \varsigma \Vert_{M^1(\Z^n \times \T^n)} \left\Vert V_{g_1}f \cdot \overline{V_{g_2}h} \right\Vert_{M^\infty(\Z^n \times \T^n)}.
\end{eqnarray}
Since $L^2(\Z^n \times \T^n) \subset M^\infty(\Z^n \times \T^n)$ and $M^1(\Z^n) \subset \ell^2(\Z^n)$, using Plancherel's formula (\ref{eq17}), we obtain 
\begin{eqnarray}\label{eq46}
\left\Vert V_{g_1}f \cdot \overline{V_{g_2}h} \right\Vert_{M^\infty(\Z^n \times \T^n)}
& \leq & \left\Vert V_{g_1}f \cdot \overline{V_{g_2}h} \right\Vert_{L^2(\Z^n \times \T^n)} \nonumber \\
& \leq & \left\Vert V_{g_1}f \right\Vert_{L^2(\Z^n \times \T^n)} \left\Vert V_{g_2}h \right\Vert_{L^2(\Z^n \times \T^n)} \nonumber \\
& = & \Vert f \Vert_{\ell^2(\Z^n)} \; \Vert g_1 \Vert_{\ell^2(\Z^n)} \; \Vert h \Vert_{\ell^2(\Z^n)} \; \Vert g_2 \Vert_{\ell^2(\Z^n)} \nonumber \\
& \leq & \Vert f \Vert_{\ell^2(\Z^n)} \; \Vert h \Vert_{\ell^2(\Z^n)} \; \Vert g_1 \Vert_{M^1(\Z^n)} \; \Vert g_2 \Vert_{M^1(\Z^n)}.
\end{eqnarray}
Thus from (\ref{eq45}) and (\ref{eq46}), we get  
\[\left\Vert \mathfrak{L}^{g_1, g_2}_{\varsigma} \right\Vert_{\mathcal{B}(\ell^2(\Z^n))} \leq \Vert \varsigma \Vert_{M^1(\Z^n \times \T^n)} \;  \Vert g_1 \Vert_{M^1(\Z^n)} \; \Vert g_2 \Vert_{M^1(\Z^n)} .\]
\end{proof}

\begin{proposition}\label{pro2}
Let $\varsigma \in M^2(\Z^n \times \T^n)$ and $g_1, g_2 \in  M^1(\Z^n)$. Then the localization operator $\mathfrak{L}^{g_1, g_2}_{\varsigma}$ is in $\mathcal{B}(\ell^2(\Z^n))$ and we have
\[ \left\Vert \mathfrak{L}^{g_1, g_2}_{\varsigma} \right\Vert_{\mathcal{B}(\ell^2(\Z^n))} \leq \Vert \varsigma \Vert_{M^2(\Z^n \times \T^n)} \; \Vert g_1 \Vert_{M^1(\Z^n)} \; \Vert g_2 \Vert_{M^1(\Z^n)}. \]
\end{proposition}    
\begin{proof}
For every $f, h \in \ell^2(\Z^n)$, using H\"older's inequality, we deduce that
\begin{eqnarray*}
\left| \left\langle \mathfrak{L}^{g_1, g_2}_{\varsigma}f, h \right\rangle_{\ell^2(\Z^n)} \right| 
& \leq & \sum_{m \in \Z^n}  \int_{\T^n} |\varsigma(m, w)| \; \left|V_{g_1}f(m, w) \; \overline{V_{g_2}h(m, w)} \right| \; dw \\ 
& \leq & \left\Vert \varsigma \right\Vert_{L^2(\Z^n \times \T^n)} \left\Vert V_{g_1}f  \cdot \overline{V_{g_2}h} \right\Vert_{L^2(\Z^n \times \T^n)}.
\end{eqnarray*}
Since $L^2(\Z^n \times \T^n)= M^2(\Z^n \times \T^n)$, using the estimate obtained in (\ref{eq46}), we get
\begin{eqnarray*}
\left| \left\langle \mathfrak{L}^{g_1, g_2}_{\varsigma}f, h \right\rangle_{\ell^2(\Z^n)} \right|
\leq \left\Vert \varsigma \right\Vert_{M^2(\Z^n \times \T^n)} \Vert f \Vert_{\ell^2(\Z^n)} \; \Vert h \Vert_{\ell^2(\Z^n)} \; \Vert g_1 \Vert_{M^1(\Z^n)} \; \Vert g_2 \Vert_{M^1(\Z^n)}.
\end{eqnarray*}
Hence,
\[ \left\Vert \mathfrak{L}^{g_1, g_2}_{\varsigma} \right\Vert_{\mathcal{B}(\ell^2(\Z^n))} \leq \Vert \varsigma \Vert_{M^2(\Z^n \times \T^n)} \; \Vert g_1 \Vert_{M^1(\Z^n)} \; \Vert g_2 \Vert_{M^1(\Z^n)}. \]
\end{proof}

\begin{theorem}\label{th1}
Let $\varsigma \in M^p(\Z^n \times \T^n)$, $1 < p < 2$ and $g_1, g_2 \in  M^1(\Z^n)$. Then, for fixed $\varsigma \in M^p(\Z^n \times \T^n)$ the operator $\mathfrak{L}_{g_1, g_2}$ can be uniquely extended to a bounded linear operator on $\ell^2(\Z^n)$, such that
\[ \left\Vert \mathfrak{L}^{g_1, g_2}_{\varsigma} \right\Vert_{\mathcal{B}(\ell^2(\Z^n))} \leq \Vert \varsigma \Vert_{M^p(\Z^n \times \T^n)} \; \Vert g_1 \Vert_{M^1(\Z^n)} \; \Vert g_2 \Vert_{M^1(\Z^n)}. \] 
\end{theorem}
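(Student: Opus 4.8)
The plan is to deduce Theorem~\ref{th1} by complex interpolation between the two endpoint cases already established. Set $C := \Vert g_1 \Vert_{M^1(\Z^n)}\,\Vert g_2 \Vert_{M^1(\Z^n)}$. Reading \eqref{eq42} together with the estimate \eqref{eq46} (which shows $V_{g_1}f\cdot\overline{V_{g_2}h}\in M^1(\Z^n\times\T^n)$) and the duality $(M^1(\Z^n\times\T^n))'=M^\infty(\Z^n\times\T^n)$, the linear assignment $T\colon \varsigma\mapsto \mathfrak{L}^{g_1,g_2}_{\varsigma}$ is well defined on all of $M^\infty(\Z^n\times\T^n)$, which contains $M^1(\Z^n\times\T^n)$, with values in $\mathcal{B}(\ell^2(\Z^n))$. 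Propositions~\ref{pro1} and~\ref{pro2} then state precisely that $T$ is bounded from $M^1(\Z^n\times\T^n)$ into $\mathcal{B}(\ell^2(\Z^n))$ with norm at most $C$, and bounded from $M^\infty(\Z^n\times\T^n)$ into $\mathcal{B}(\ell^2(\Z^n))$ with norm at most $C$.

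Next I would invoke that the modulation spaces on $\Z^n\times\T^n$ form a complex interpolation scale: for $1<p<\infty$,
\[
\bigl[\,M^1(\Z^n\times\T^n),\ M^\infty(\Z^n\times\T^n)\,\bigr]_{\theta}=M^p(\Z^n\times\T^n),\qquad \theta=1-\tfrac1p,
\]
with equivalent norms. This is the counterpart for the group $\Z^n\times\T^n$ of the interpolation theorem for modulation spaces in \cite{gro01}; it follows because, for a fixed window, $V_g$ maps each $M^p(\Z^n\times\T^n)$ isometrically onto a complemented subspace of $L^p\big((\Z^n\times\T^n)\times(\Z^n\times\T^n)^{\wedge}\big)$, so the identification reduces to $[L^1,L^\infty]_\theta=L^p$ and the stability of complex interpolation under retractions.

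Applying the complex interpolation functor to $T$, and using $[\mathcal{B}(\ell^2(\Z^n)),\mathcal{B}(\ell^2(\Z^n))]_\theta=\mathcal{B}(\ell^2(\Z^n))$, I then obtain that $T$ maps $M^p(\Z^n\times\T^n)$ into $\mathcal{B}(\ell^2(\Z^n))$ with
\[
\Vert T\Vert_{M^p\to\mathcal{B}(\ell^2)}\;\le\;\Vert T\Vert_{M^1\to\mathcal{B}(\ell^2)}^{\,1-\theta}\;\Vert T\Vert_{M^\infty\to\mathcal{B}(\ell^2)}^{\,\theta}\;\le\;C^{1-\theta}C^{\theta}=C,
\]
which is the asserted inequality. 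For the uniqueness of the bounded extension, note that $\mathcal{S}_{\mathcal{C}}(\Z^n\times\T^n)\subset M^1(\Z^n\times\T^n)\cap M^p(\Z^n\times\T^n)$ is dense in $M^p(\Z^n\times\T^n)$ and consists of functions in $L^1\cap L^\infty$, so on this subspace $\mathfrak{L}^{g_1,g_2}_{\varsigma}$ coincides with the operator given directly by \eqref{eq41}; the uniform bound just proved then yields a unique bounded extension to all of $M^p(\Z^n\times\T^n)$.

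The step I expect to demand the most care is the identification $[M^1,M^\infty]_\theta=M^p$ on the phase space $\Z^n\times\T^n$: one must verify that the STFT/retraction argument carries over verbatim to this mixed discrete--compact setting (here the ambient space is $L^p$ on $(\Z^n\times\T^n)\times(\T^n\times\Z^n)$, a product of a discrete and a compact group), and that $\mathcal{S}_{\mathcal{C}}(\Z^n\times\T^n)$ is dense in $M^p(\Z^n\times\T^n)$ for every $p<\infty$; everything else is a direct application of standard interpolation theory. An interpolation-free alternative would be to pair $\varsigma\in M^p(\Z^n\times\T^n)$ directly against $V_{g_1}f\cdot\overline{V_{g_2}h}\in M^1(\Z^n\times\T^n)\hookrightarrow M^{p'}(\Z^n\times\T^n)$ via $(M^p)'=M^{p'}$, but this only gives the estimate up to a window-dependent constant in place of the sharp $C$, so I would favour the interpolation route.
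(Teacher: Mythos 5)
Your proposal is correct and follows essentially the same route as the paper: interpolation between the $M^1$ and $M^\infty$ endpoint bounds of Propositions \ref{pro1} and \ref{pro2}, followed by a density argument to pass to general symbols in $M^p(\Z^n\times\T^n)$. The only cosmetic difference is that the paper obtains the extension as the limit of a Cauchy sequence of operators $\mathfrak{L}^{g_1,g_2}_{\varsigma_n}$ while you phrase it as the unique bounded extension from a dense subspace, and both arguments rest on the same interpolation identity $[M^1,M^\infty]_\theta=M^p$ that the paper likewise invokes without detailed proof.
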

\begin{proof}
Let $1< p < 2$. For every $\varsigma \in M^1(\Z^n \times \T^n) \cap  M^2(\Z^n \times \T^n)$, using Proposition \ref{pro1}, Proposition \ref{pro2}, the fact that the modulation spaces $M^{p}$ interpolate exactly like the corresponding mixed-norm spaces $L^{p}$ and the Riesz--Thorin interpolation theorem (see \cite{ste56}), we obtain 
\[ \left\Vert \mathfrak{L}^{g_1, g_2}_{\varsigma} \right\Vert_{\mathcal{B}(\ell^2(\Z^n))} \leq \Vert \varsigma \Vert_{M^p(\Z^n \times \T^n)} \; \Vert g_1 \Vert_{M^1(\Z^n)} \; \Vert g_2 \Vert_{M^1(\Z^n)} . \]
Let $\varsigma \in M^p(\Z^n \times \T^n)$ and $\{ \varsigma_n \}_{n \geq 1}$ be a sequence of functions in $M^1(\Z^n \times \T^n) \cap M^2(\Z^n \times \T^n)$ such that $\varsigma_n \to \varsigma$ in $M^p(\Z^n \times \T^n)$ as $ n \to \infty$. Hence for every $n, k \in \mathbb{N}$, we have 
\[ \left\Vert \mathfrak{L}^{g_1, g_2}_{\varsigma_n} - \mathfrak{L}^{g_1, g_2}_{\varsigma_k} \right\Vert_{\mathcal{B}(\ell^2(\Z^n))} \leq \Vert \varsigma_n - \varsigma_k \Vert_{M^p(\Z^n \times \T^n)} \; \Vert g_1 \Vert_{M^1(\Z^n)} \; \Vert g_2 \Vert_{M^1(\Z^n)} . \]
Therefore, $\{ \mathfrak{L}^{g_1, g_2}_{\varsigma_n} \}_{n \geq 1}$ is a Cauchy sequence in $\mathcal{B}(\ell^2(\Z^n))$. Let $\{ \mathfrak{L}^{g_1, g_2}_{\varsigma_n} \}_{n \geq 1}$ converges to $ \mathfrak{L}^{g_1, g_2}_{\varsigma}$. Then the limit $ \mathfrak{L}^{g_1, g_2}_{\varsigma}$ is independent of the choice of $\{ \varsigma_n \}_{n \geq 1}$ and we obtain
\begin{eqnarray*}
\left\Vert \mathfrak{L}^{g_1, g_2}_{\varsigma} \right\Vert_{\mathcal{B}(\ell^2(\Z^n))} 
=  \lim_{n \to \infty} \left\Vert \mathfrak{L}^{g_1, g_2}_{\varsigma_n} \right\Vert_{\mathcal{B}(\ell^2(\Z^n))} 
& \leq & \lim_{n \to \infty} \Vert \varsigma_n \Vert_{M^p(\Z^n \times \T^n)} \; \Vert g_1 \Vert_{M^1(\Z^n)} \; \Vert g_2 \Vert_{M^1(\Z^n)} \\
& =& \Vert \varsigma \Vert_{M^p(\Z^n \times \T^n)} \; \Vert g_1 \Vert_{M^1(\Z^n)} \; \Vert g_2 \Vert_{M^1(\Z^n)}.
\end{eqnarray*}
This completes the proof. 
\end{proof}

\begin{theorem}\label{th2}
Let $\varsigma \in M^p(\Z^n \times \T^n)$, $1 \leq p \leq 2$ and $g_1, g_2 \in  M^1(\Z^n)$. Then the localization operator $\mathfrak{L}^{g_1, g_2}_{\varsigma}: \ell^2(\Z^n) \to \ell^2(\Z^n) $ is compact. 
\end{theorem}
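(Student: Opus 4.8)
The plan is to combine the boundedness estimates already established with a density argument, using that the compact operators form a norm-closed subspace of $\mathcal{B}(\ell^2(\Z^n))$. By Proposition \ref{pro1} (for $p=1$) and Theorem \ref{th1} (for $1<p<\infty$), the assignment $\varsigma\mapsto\mathfrak{L}^{g_1,g_2}_\varsigma$ is a bounded linear map from $M^p(\Z^n\times\T^n)$ into $\mathcal{B}(\ell^2(\Z^n))$, with operator-norm bound $\Vert\varsigma\Vert_{M^p(\Z^n\times\T^n)}\,\Vert g_1\Vert_{M^1(\Z^n)}\,\Vert g_2\Vert_{M^1(\Z^n)}$. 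Since $M^1(\Z^n\times\T^n)$ is dense in $M^p(\Z^n\times\T^n)$ for every $1\le p<\infty$ (a standard property of modulation spaces), it therefore suffices to prove that $\mathfrak{L}^{g_1,g_2}_\varsigma$ is compact when $\varsigma\in M^1(\Z^n\times\T^n)$; the general case then follows by choosing $\varsigma_k\in M^1(\Z^n\times\T^n)$ with $\varsigma_k\to\varsigma$ in $M^p(\Z^n\times\T^n)$, observing that $\Vert\mathfrak{L}^{g_1,g_2}_\varsigma-\mathfrak{L}^{g_1,g_2}_{\varsigma_k}\Vert_{\mathcal{B}(\ell^2(\Z^n))}\le\Vert\varsigma-\varsigma_k\Vert_{M^p(\Z^n\times\T^n)}\,\Vert g_1\Vert_{M^1(\Z^n)}\,\Vert g_2\Vert_{M^1(\Z^n)}\to0$, and recalling that a norm-limit of compact operators is compact.

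For $\varsigma\in M^1(\Z^n\times\T^n)$ I would in fact show the stronger statement that $\mathfrak{L}^{g_1,g_2}_\varsigma$ is trace class. Writing $z=(m,w)$ and $k_i(z)=M_wT_mg_i\in\ell^2(\Z^n)$, formula (\ref{eq41}) exhibits $\mathfrak{L}^{g_1,g_2}_\varsigma$ as the superposition
\[
\mathfrak{L}^{g_1,g_2}_\varsigma=\sum_{m\in\Z^n}\int_{\T^n}\varsigma(z)\,\langle\,\cdot\,,k_1(z)\rangle_{\ell^2(\Z^n)}\,k_2(z)\;dw,
\]
an $S_1$-valued Bochner integral whose integrand at $z$ is a rank-one operator of $S_1$-norm $|\varsigma(z)|\,\Vert k_1(z)\Vert_{\ell^2(\Z^n)}\,\Vert k_2(z)\Vert_{\ell^2(\Z^n)}=|\varsigma(z)|\,\Vert g_1\Vert_{\ell^2(\Z^n)}\,\Vert g_2\Vert_{\ell^2(\Z^n)}$, since the time-frequency shifts are unitary on $\ell^2(\Z^n)$ and $g_i\in M^1(\Z^n)\subset\ell^2(\Z^n)$. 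Because $M^1(\Z^n\times\T^n)\hookrightarrow L^1(\Z^n\times\T^n)$, we have $\sum_{m}\int_{\T^n}|\varsigma(z)|\,dw=\Vert\varsigma\Vert_{L^1(\Z^n\times\T^n)}<\infty$, so the triangle inequality for the $S_1$-valued Bochner integral yields $\mathfrak{L}^{g_1,g_2}_\varsigma\in S_1$ with $\Vert\mathfrak{L}^{g_1,g_2}_\varsigma\Vert_{S_1}\le\Vert g_1\Vert_{\ell^2(\Z^n)}\,\Vert g_2\Vert_{\ell^2(\Z^n)}\,\Vert\varsigma\Vert_{L^1(\Z^n\times\T^n)}$. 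Pairing this integral with $h\in\ell^2(\Z^n)$ reproduces exactly the weak identity (\ref{eq42}), which identifies the Bochner integral with the operator $\mathfrak{L}^{g_1,g_2}_\varsigma$ of the Definition; in particular it is compact. An alternative that avoids the Bochner-integral formalism is to compute the $\ell^2(\Z^n)$ integral kernel of $\mathfrak{L}^{g_1,g_2}_\varsigma$, namely $K(k,l)=\sum_{m}\overline{g_1(l-m)}\,g_2(k-m)\,(\mathcal{F}_{\T^n}\varsigma(m,\cdot))(k-l)$, and to bound $\Vert K\Vert_{\ell^2(\Z^n\times\Z^n)}$ by $C\,\Vert g_1\Vert\,\Vert g_2\Vert\,\Vert\varsigma\Vert_{M^2(\Z^n\times\T^n)}$ using Plancherel in the $\T^n$-variable, the Cauchy--Schwarz inequality in $m$, and $g_i\in M^1(\Z^n)\subset\ell^1(\Z^n)\cap\ell^2(\Z^n)$; this shows $\mathfrak{L}^{g_1,g_2}_\varsigma$ is Hilbert--Schmidt for $\varsigma\in M^2(\Z^n\times\T^n)\supset M^1(\Z^n\times\T^n)$, which is again sufficient.

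I expect the only genuinely substantive point to be this middle step: either verifying that (\ref{eq41}) defines a bona fide $S_1$-valued Bochner integral (strong measurability in $S_1$, and the identification with the weakly defined operator) and then invoking the $S_1$ triangle inequality, or, on the kernel side, carrying out the kernel estimate while checking convergence of the double sums using only $g_1,g_2\in M^1(\Z^n)$. The remaining ingredients — density of $M^1(\Z^n\times\T^n)$ in $M^p(\Z^n\times\T^n)$ for $p<\infty$, norm closedness of the compact operators, and the boundedness bound from Theorem \ref{th1} — are standard and already available.
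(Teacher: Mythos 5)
Your proposal is correct, and its outer skeleton is exactly the paper's: reduce to symbols in $M^1(\Z^n\times\T^n)$ by density, use the operator-norm bound of Theorem \ref{th1} to get convergence in $\mathcal{B}(\ell^2(\Z^n))$, and invoke norm-closedness of the compact operators. Where you genuinely diverge is in the core step, proving $S_1$ membership for $\varsigma\in M^1(\Z^n\times\T^n)$. The paper fixes an orthonormal basis $\{v_n\}$, expands $\sum_n\langle\mathfrak{L}^{g_1,g_2}_\varsigma v_n,v_n\rangle$ via \eqref{eq42}, interchanges the sum with the integral, and bounds the inner sum by $\tfrac12(\sum_n|\langle v_n,M_wT_mg_1\rangle|^2+\sum_n|\langle M_wT_mg_2,v_n\rangle|^2)$ using Bessel's inequality, arriving at $\tfrac12\Vert\varsigma\Vert_{M^1}(\Vert g_1\Vert^2_{M^1}+\Vert g_2\Vert^2_{M^1})$; it then declares the operator to be in $S_1$. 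As stated, that inference is delicate: finiteness of the diagonal sum $\sum_n\langle Av_n,v_n\rangle$ over orthonormal bases does not by itself characterize trace class for a non-positive operator (one needs the bilinear version $\sum_n|\langle Av_n,u_n\rangle|<\infty$ over all pairs of orthonormal systems, as in Wong's book — the paper's computation does extend to that case, but it is not written that way). Your $S_1$-valued Bochner-integral argument — writing $\mathfrak{L}^{g_1,g_2}_\varsigma$ as a superposition of rank-one operators of $S_1$-norm $|\varsigma(z)|\,\Vert g_1\Vert_{\ell^2}\Vert g_2\Vert_{\ell^2}$ and using $M^1\hookrightarrow L^1$ — sidesteps this issue entirely and yields the trace-norm bound $\Vert g_1\Vert_{\ell^2}\Vert g_2\Vert_{\ell^2}\Vert\varsigma\Vert_{L^1}$ cleanly, at the cost of having to verify strong measurability and the identification with the weakly defined operator (both routine here, since $w\mapsto M_wT_mg$ is continuous into $\ell^2(\Z^n)$). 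Your Hilbert--Schmidt kernel alternative is also valid and even more elementary, though it gives only $S_2$ rather than $S_1$, which still suffices for compactness. Either variant is a legitimate — arguably tighter — substitute for the paper's first step.
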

\begin{proof}
Assume that $\varsigma \in M^1(\Z^n \times \T^n)$. Let $\{ v_n \}_n$ be an orthonormal basis for $\ell^2(\Z^n)$. Since $M^1(\Z^n \times \T^n) \subset L^1(\Z^n \times \T^n)$, using Parseval's identity, we obtain 
\begin{eqnarray*}
&& \sum_{n=1}^\infty \left\langle \mathfrak{L}^{g_1, g_2}_{\varsigma} v_n, v_n \right\rangle_{\ell^2(\Z^n)} \\
&& = \sum_{n=1}^\infty  \sum_{m \in \Z^n}  \int_{\T^n} \varsigma (m,w) \; \langle v_n, M_w T_m g_1 \rangle_{\ell^2(\Z^n)} \; \langle M_w T_m g_2 , v_n \rangle_{\ell^2(\Z^n)} \; dw\\
&& = \sum_{m \in \Z^n}  \int_{\T^n} \varsigma (m,w)  \left( \sum_{n=1}^\infty \langle v_n, M_w T_m g_1 \rangle_{\ell^2(\Z^n)}\;  \langle M_w T_m g_2 , v_n \rangle_{\ell^2(\Z^n)} \right) dw \\
&& \leq \frac{1}{2} \sum_{m \in \Z^n}  \int_{\T^n} \varsigma (m,w)  \left( \sum_{n=1}^\infty |\langle v_n, M_w T_m g_1 \rangle_{\ell^2(\Z^n)}|^2 + \sum_{n=1}^\infty |\langle M_w T_m g_2, v_n \rangle_{\ell^2(\Z^n)}|^2 \right) \; dw\\
&& = \frac{1}{2} \Vert \varsigma \Vert_{L^1(\Z^n \times \T^n)} \; (\Vert g_1 \Vert^2_{\ell^2(\Z^n)} + \Vert g_2 \Vert^2_{\ell^2(\Z^n)}) \\
&& \leq \frac{1}{2} \Vert \varsigma \Vert_{M^1(\Z^n \times \T^n)} \; (\Vert g_1 \Vert^2_{M^1(\Z^n)} + \Vert g_2 \Vert^2_{M^1(\Z^n)}). \qquad \qquad \qquad \qquad \qquad \qquad \qquad \qquad \qquad \qquad \qquad
\end{eqnarray*}
Therefore, the operator $\mathfrak{L}^{g_1, g_2}_{\varsigma}$ is in $S_1$. Next, assume that $\varsigma \in M^p(\Z^n \times \T^n)$. We consider a sequence of functions $\{ \varsigma_n \}_{n \geq 1}$ in $M^1(\Z^n \times \T^n) \cap M^2(\Z^n \times \T^n)$ such that $\varsigma_n \to \varsigma$ in $M^p(\Z^n \times \T^n)$ as $n \to \infty$. Then, using Theorem \ref{th1}, we get
\[ \Vert \mathfrak{L}^{g_1, g_2}_{\varsigma_n} - \mathfrak{L}^{g_1, g_2}_{\varsigma}  \Vert_{\mathcal{B}(\ell^2(\Z^n))} \leq \Vert \varsigma_n - \varsigma \Vert_{M^p(\Z^n \times \T^n)} \; \Vert g_1 \Vert_{M^1(\Z^n)} \; \Vert g_2 \Vert_{M^1(\Z^n)}  \to 0, \]
as $n \to \infty$. Hence, $\mathfrak{L}^{g_1, g_2}_{\varsigma_n} \to \mathfrak{L}^{g_1, g_2}_{\varsigma} $ in $\mathcal{B}(\ell^2(\Z^n))$ as $n \to \infty$. From the above, we obtain that $ \{ \mathfrak{L}^{g_1, g_2}_{\varsigma_n} \}_{n \geq 1}$ is a sequence of linear operators in $S_1$ and hence compact, so $\mathfrak{L}^{g_1, g_2}_{\varsigma}$ is compact. 
\end{proof}

Next, we calculate the adjoint $(\mathfrak{L}^{g_1, g_2}_{\varsigma})^*$ of the operator $\mathfrak{L}^{g_1, g_2}_{\varsigma}$ on $\ell^2(\Z^n)$ determined by the relation
\[ \left\langle \mathfrak{L}^{g_1, g_2}_{\varsigma}f , h \right\rangle_{\ell^2(\Z^n)}= \left\langle f , (\mathfrak{L}^{g_1, g_2}_{\varsigma})^*h \right\rangle_{\ell^2(\Z^n)}. \] 
We have
\begin{eqnarray*}
\left\langle \mathfrak{L}^{g_1, g_2}_{\varsigma}f , h \right\rangle_{\ell^2(\Z^n)} 
&=& \sum_{m \in \Z^n}  \int_{\T^n} \varsigma(m, w) \; V_{g_1}f(m,w) \; \overline{V_{g_2}h(m,w)} \; dw \\
&=& \sum_{m \in \Z^n}  \int_{\T^n} \varsigma(m, w) \; \langle f, M_w T_m g_1 \rangle_{\ell^2(\Z^n)}  \; \overline{V_{g_2}h(m,w)} \; dw \\
&=& \sum_{m \in \Z^n}  \int_{\T^n} \varsigma(m, w) \; \langle f, V_{g_2}h(m,w) \; M_w T_m g_1 \rangle_{\ell^2(\Z^n)}  \; dw \\
&=&  \left\langle f, \sum_{m \in \Z^n}  \int_{\T^n} \overline{\varsigma(m, w)} \; V_{g_2}h(m,w) \; M_w T_m g_1 \; dw \right\rangle_{\ell^2(\Z^n)} \\
&=& \left\langle f, \mathfrak{L}^{g_2, g_1}_{\overline{\varsigma}}h \right\rangle_{\ell^2(\Z^n)}.
\end{eqnarray*}
Therefore, we have
\[ (\mathfrak{L}^{g_1, g_2}_{\varsigma})^* = \mathfrak{L}^{g_2, g_1}_{\overline{\varsigma}}. \]
Hence, the localization operator $\mathfrak{L}^{g_1, g_2}_{\varsigma}$ is a self-adjoint operator if $g_1=g_2$ and the symbol $\varsigma$ is a real-valued function.  

\subsection{Localization operators in $S_p$}

In this subsection, we prove that the localization operator $\mathfrak{L}^{g, g}_{\varsigma}$ is in $S_p$ and provide an upper bound of the norm $\Vert \mathfrak{L}^{g, g}_{\varsigma} \Vert_{S_p}$. We begin with the following proposition.

\begin{proposition}\label{pro3}
Let $\varsigma \in  M^1(\Z^n \times \T^n)$ and $g \in  M^1(\Z^n)$. Then the localization operator $\mathfrak{L}^{g, g}_{\varsigma}: \ell^2(\Z^n) \to \ell^2(\Z^n) $ is in $S_1$ and 
\[ \Vert \mathfrak{L}^{g, g}_{\varsigma} \Vert_{S_1} \leq 4 \Vert \varsigma \Vert_{M^1(\Z^n \times \T^n)} \;  \Vert g \Vert^2_{M^1(\Z^n)}. \]
\end{proposition}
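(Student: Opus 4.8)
The plan is to reduce the statement to the case of a nonnegative symbol, for which $\mathfrak{L}^{g,g}_\varsigma$ is a positive operator that factors as $K^{*}K$ with $K$ Hilbert--Schmidt, and then to handle a general symbol by splitting it into four nonnegative pieces.

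\textit{The nonnegative case.} Let $\tau$ be a measurable function on $\Z^n\times\T^n$ with $0\le\tau\le|\varsigma|$ pointwise; since $M^1(\Z^n\times\T^n)\hookrightarrow L^1(\Z^n\times\T^n)$ we have $\tau\in L^1(\Z^n\times\T^n)$ with $\|\tau\|_{L^1(\Z^n\times\T^n)}\le\|\varsigma\|_{M^1(\Z^n\times\T^n)}$. Define $K\colon\ell^2(\Z^n)\to L^2(\Z^n\times\T^n)$ by $Kf=\sqrt{\tau}\,V_gf$. Using the pointwise estimate $|V_gf(m,w)|=|\langle f,M_wT_mg\rangle_{\ell^2(\Z^n)}|\le\|f\|_{\ell^2(\Z^n)}\|g\|_{\ell^2(\Z^n)}$ one checks
\[ \|Kf\|_{L^2(\Z^n\times\T^n)}^2=\sum_{m\in\Z^n}\int_{\T^n}\tau(m,w)\,|V_gf(m,w)|^2\,dw\le\|\tau\|_{L^1(\Z^n\times\T^n)}\|g\|_{\ell^2(\Z^n)}^2\|f\|_{\ell^2(\Z^n)}^2, \]
so $K$ is bounded, and from $(\ref{eq42})$ one gets $\langle K^{*}Kf,h\rangle_{\ell^2(\Z^n)}=\langle Kf,Kh\rangle_{L^2(\Z^n\times\T^n)}=\langle\mathfrak{L}^{g,g}_\tau f,h\rangle_{\ell^2(\Z^n)}$, i.e. $\mathfrak{L}^{g,g}_\tau=K^{*}K\ge0$. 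To prove $K$ is Hilbert--Schmidt, pick an orthonormal basis $\{v_n\}_n$ of $\ell^2(\Z^n)$; by Tonelli's theorem (all terms are nonnegative) and Parseval's identity, using $\sum_n|\langle v_n,M_wT_mg\rangle_{\ell^2(\Z^n)}|^2=\|M_wT_mg\|_{\ell^2(\Z^n)}^2=\|g\|_{\ell^2(\Z^n)}^2$,
\[ \sum_n\|Kv_n\|_{L^2(\Z^n\times\T^n)}^2=\sum_{m\in\Z^n}\int_{\T^n}\tau(m,w)\Big(\sum_n|\langle v_n,M_wT_mg\rangle_{\ell^2(\Z^n)}|^2\Big)dw=\|\tau\|_{L^1(\Z^n\times\T^n)}\|g\|_{\ell^2(\Z^n)}^2. \]
This is finite, so $K\in S_2$ and $\mathfrak{L}^{g,g}_\tau=K^{*}K\in S_1$ with $\|\mathfrak{L}^{g,g}_\tau\|_{S_1}=\|K\|_{HS}^2=\|\tau\|_{L^1(\Z^n\times\T^n)}\|g\|_{\ell^2(\Z^n)}^2$.

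\textit{The general case.} Write $\varsigma=\varsigma_1-\varsigma_2+i\varsigma_3-i\varsigma_4$ with $\varsigma_1=(\operatorname{Re}\varsigma)_{+}$, $\varsigma_2=(\operatorname{Re}\varsigma)_{-}$, $\varsigma_3=(\operatorname{Im}\varsigma)_{+}$, $\varsigma_4=(\operatorname{Im}\varsigma)_{-}$, each $\varsigma_j\ge0$ and $\varsigma_j\le|\varsigma|$ pointwise. Since $\varsigma\mapsto\mathfrak{L}^{g,g}_\varsigma$ is linear, $\mathfrak{L}^{g,g}_\varsigma=\mathfrak{L}^{g,g}_{\varsigma_1}-\mathfrak{L}^{g,g}_{\varsigma_2}+i\mathfrak{L}^{g,g}_{\varsigma_3}-i\mathfrak{L}^{g,g}_{\varsigma_4}$; applying the nonnegative case to each $\varsigma_j$ and the triangle inequality in $S_1$ gives
\[ \|\mathfrak{L}^{g,g}_\varsigma\|_{S_1}\le\sum_{j=1}^{4}\|\mathfrak{L}^{g,g}_{\varsigma_j}\|_{S_1}=\sum_{j=1}^{4}\|\varsigma_j\|_{L^1(\Z^n\times\T^n)}\|g\|_{\ell^2(\Z^n)}^2\le4\,\|\varsigma\|_{L^1(\Z^n\times\T^n)}\|g\|_{\ell^2(\Z^n)}^2, \]
and then the embeddings $M^1(\Z^n\times\T^n)\hookrightarrow L^1(\Z^n\times\T^n)$ and $M^1(\Z^n)\hookrightarrow\ell^2(\Z^n)$ (so $\|\varsigma\|_{L^1(\Z^n\times\T^n)}\le\|\varsigma\|_{M^1(\Z^n\times\T^n)}$ and $\|g\|_{\ell^2(\Z^n)}\le\|g\|_{M^1(\Z^n)}$) give the claimed bound $\|\mathfrak{L}^{g,g}_\varsigma\|_{S_1}\le4\,\|\varsigma\|_{M^1(\Z^n\times\T^n)}\|g\|_{M^1(\Z^n)}^2$.

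The routine points---identifying $K^{*}$, checking $\mathfrak{L}^{g,g}_\tau=K^{*}K$, and interchanging the summation with the integral---are unproblematic because $\tau\ge0$ and $V_gf$ is bounded. The only genuine subtlety, and the reason the constant is $4$ rather than $1$, is that taking positive and negative parts of $\operatorname{Re}\varsigma$ and $\operatorname{Im}\varsigma$ is not a bounded operation on $M^1$, so the nonnegative case must be carried out with only the $L^1$-norm of the symbol, the $M^1$-norm being invoked at the very end via $M^1\hookrightarrow L^1$. (One can in fact dispense with the splitting: writing $\varsigma=u|\varsigma|$ with $|u|\le1$ measurable, one has $\mathfrak{L}^{g,g}_\varsigma=K^{*}M_uK$ with $Kf=\sqrt{|\varsigma|}\,V_gf$ Hilbert--Schmidt exactly as above, so that $\|\mathfrak{L}^{g,g}_\varsigma\|_{S_1}\le\|K\|_{S_2}\,\|M_u\|_{\mathcal B(L^2(\Z^n\times\T^n))}\,\|K\|_{S_2}\le\|K\|_{HS}^2=\|\varsigma\|_{L^1(\Z^n\times\T^n)}\|g\|_{\ell^2(\Z^n)}^2$, which improves the constant to $1$; but the weaker bound in the statement already follows from the simpler argument.)
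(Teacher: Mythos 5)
Your proof is correct, and it differs from the paper's in a way worth noting. The paper also reduces to a nonnegative symbol and then splits (real/imaginary, then positive/negative parts, yielding the same factor $4$), but it handles the nonnegative case by citing the trace computation from the compactness theorem together with the identity $\bigl((\mathfrak{L}^{g,g}_{\varsigma})^{*}\mathfrak{L}^{g,g}_{\varsigma}\bigr)^{1/2}=\mathfrak{L}^{g,g}_{\varsigma}$ for a positive operator, whereas you exhibit the explicit factorization $\mathfrak{L}^{g,g}_{\tau}=K^{*}K$ with $Kf=\sqrt{\tau}\,V_gf$ Hilbert--Schmidt; this gives membership in $S_1$ and the exact value $\|K\|_{HS}^{2}=\|\tau\|_{L^1}\|g\|_{\ell^2}^{2}$ in one stroke, and is arguably cleaner. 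More substantively, you are right to flag that taking positive and negative parts is not a bounded operation on $M^1$: the paper's chain of inequalities passes through $\Vert\varsigma_{+}\Vert_{M^1(\Z^n\times\T^n)}+\Vert\varsigma_{-}\Vert_{M^1(\Z^n\times\T^n)}\le 2\Vert\varsigma\Vert_{M^1(\Z^n\times\T^n)}$, which is not justified (and it is not even clear that $\varsigma_{\pm}\in M^1$), whereas your version carries out the nonnegative case purely with the $L^1$-norm of the dominated symbol $\tau$, where the splitting is harmless, and invokes $M^1\hookrightarrow L^1$ only at the very end. Your parenthetical $K^{*}M_uK$ remark, which removes the factor $4$ entirely, is also correct. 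In short: same skeleton, but your execution of the nonnegative case is different and repairs a soft spot in the paper's argument.
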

\begin{proof}
If $\varsigma \in  M^1(\Z^n \times \T^n)$, then from the first part of the proof of Theorem \ref{th2}, the operator $\mathfrak{L}^{g, g}_{\varsigma}$ is in $S_1$. Now, to prove the estimate, assume that $\varsigma$ is non-negative real-valued and in $M^1(\Z^n \times \T^n)$. Then $(({\mathfrak{L}^{g, g}_{\varsigma}})^* \mathfrak{L}^{g, g}_{\varsigma})^{1/2}=\mathfrak{L}^{g, g}_{\varsigma}$. Let $\{v_n\}_n$ be an orthonormal basis for $\ell^2(\Z^n)$ consisting of eigenvalues of $(({\mathfrak{L}^{g, g}_{\varsigma}})^* \mathfrak{L}^{g, g}_{\varsigma})^{1/2}: \ell^2(\Z^n) \to \ell^2(\Z^n)$. 
Then by using the estimate obtained in the first part of the proof of Theorem \ref{th2}, we get
\begin{eqnarray}\label{eq43}
\Vert \mathfrak{L}^{g, g}_{\varsigma} \Vert_{S_1}
& =& \sum_{n=1}^\infty \left\langle (({\mathfrak{L}^{g, g}_{\varsigma}})^* \mathfrak{L}^{g, g}_{\varsigma})^{1/2}v_n, v_n \right\rangle_{\ell^2(\Z^n)} \nonumber \\
&=& \sum_{n=1}^\infty \left\langle \mathfrak{L}^{g, g}_{\varsigma} v_n, v_n \right\rangle_{\ell^2(\Z^n)}  
\leq  \Vert \varsigma \Vert_{M^1(\Z^n \times \T^n)} \; \Vert g \Vert^2_{M^1(\Z^n)} .
\end{eqnarray}
Next, assume that $\varsigma \in M^1(\Z^n \times \T^n)$ is  an arbitrary real-valued function. We can write $\varsigma = \varsigma_+ - \varsigma_-$, where $\varsigma_+=\max(\varsigma, 0)$ and $\varsigma_-=-\min(\varsigma, 0)$. Then, using relation (\ref{eq43}), we obtain
\begin{eqnarray}\label{eq44}
\Vert \mathfrak{L}^{g, g}_{\varsigma} \Vert_{S_1}
= \Vert \mathfrak{L}^{g, g}_{\varsigma_+} -\mathfrak{L}^{g, g}_{\varsigma_-} \Vert_{S_1} 
& \leq & \Vert \mathfrak{L}^{g, g}_{\varsigma_+} \Vert_{S_1} +\Vert \mathfrak{L}^{g, g}_{\varsigma_-} \Vert_{S_1} \nonumber \\
& \leq &  \Vert g \Vert^2_{M^1(\Z^n)} (\Vert \varsigma_+ \Vert_{M^1(\Z^n \times \T^n)} + \Vert \varsigma_- \Vert_{M^1(\Z^n \times \T^n)}) \nonumber \\
& \leq & 2  \Vert g \Vert^2_{M^1(\Z^n)} \Vert \varsigma \Vert_{M^1(\Z^n \times \T^n)}.
\end{eqnarray}
Finally, assume that $\varsigma \in  M^1(\Z^n \times \T^n)$ is a complex-valued function. Then, we can write $\varsigma=\varsigma_1+ i \varsigma_2$, where $\varsigma_1, \varsigma_2$ are the real and imaginary parts of $\varsigma$ respectively. Then, using relation (\ref{eq44}), we obtain 
\begin{eqnarray*}
\Vert \mathfrak{L}^{g, g}_{\varsigma} \Vert_{S_1}
= \Vert \mathfrak{L}^{g, g}_{\varsigma_1}+i \; \mathfrak{L}^{g, g}_{\varsigma_2} \Vert_{S_1} 
& \leq & \Vert \mathfrak{L}^{g, g}_{\varsigma_1} \Vert_{S_1} + \Vert \mathfrak{L}^{g, g}_{\varsigma_2} \Vert_{S_1} \\
&\leq & 2  \Vert g \Vert^2_{M^1(\Z^n)} (\Vert \varsigma_1 \Vert_{M^1(\Z^n \times \T^n)} + \Vert \varsigma_2 \Vert_{M^1(\Z^n \times \T^n)}) \nonumber \\
& \leq & 4 \Vert \varsigma \Vert_{M^1(\Z^n \times \T^n)}  \Vert g \Vert^2_{M^1(\Z^n)}  .
\end{eqnarray*}
This completes the proof.
\end{proof}

\begin{theorem}
Let $\varsigma \in  M^1(\Z^n \times \T^n)$ and $g \in  M^1(\Z^n)$. Then the localization operator $\mathfrak{L}^{g, g}_{\varsigma}: \ell^2(\Z^n) \to \ell^2(\Z^n)$ is in $S_p$, for $1 \leq p \leq \infty$ and 
\[\Vert \mathfrak{L}^{g, g}_{\varsigma} \Vert_{S_p} \leq 2^{2/p} \Vert \varsigma  \Vert_{M^1(\Z^n \times \T^n)} \;  \Vert g \Vert^{2}_{M^1(\Z^n)}.\]
\end{theorem}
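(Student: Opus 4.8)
The plan is to obtain the result by complex interpolation between the two endpoint cases $p=1$ and $p=\infty$, in the same spirit as Theorem~\ref{th1}, but now at the level of Schatten norms. First I would record the endpoints. For $p=\infty$ there is nothing new: Proposition~\ref{pro2} with $g_1=g_2=g$ gives $\mathfrak{L}^{g,g}_{\varsigma}\in\mathcal{B}(\ell^2(\Z^n))=S_\infty$ with $\bigl\Vert \mathfrak{L}^{g,g}_{\varsigma}\bigr\Vert_{S_\infty}\le \Vert\varsigma\Vert_{M^\infty(\Z^n\times\T^n)}\Vert g\Vert_{M^1(\Z^n)}^{2}$. For $p=1$, Proposition~\ref{pro3} gives $\mathfrak{L}^{g,g}_{\varsigma}\in S_1$ with $\bigl\Vert \mathfrak{L}^{g,g}_{\varsigma}\bigr\Vert_{S_1}\le 4\,\Vert\varsigma\Vert_{M^1(\Z^n\times\T^n)}\Vert g\Vert_{M^1(\Z^n)}^{2}$. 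Thus the map $T\colon\varsigma\mapsto\mathfrak{L}^{g,g}_{\varsigma}$, which is linear in the symbol, is bounded from $M^1(\Z^n\times\T^n)$ into $S_1$ with norm at most $4\Vert g\Vert_{M^1(\Z^n)}^{2}$ and from $M^\infty(\Z^n\times\T^n)$ into $S_\infty$ with norm at most $\Vert g\Vert_{M^1(\Z^n)}^{2}$.

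Next I would interpolate. On the symbol side, as already used in Theorem~\ref{th1}, the modulation spaces $M^p(\Z^n\times\T^n)$ interpolate exactly like the corresponding mixed-norm $L^p$ spaces (they are retracts of such spaces through the STFT and its synthesis operator), so $[M^1,M^\infty]_\theta=M^p$ with $\tfrac1p=1-\theta$. On the operator side, the Schatten classes form a complex interpolation scale as well, $[S_1,S_\infty]_\theta=S_p$ with $\tfrac1p=1-\theta$ and equality of norms. Applying the complex interpolation theorem for analytic families of operators (see~\cite{ste56}) to $T$ then yields, for every $1<p<\infty$ and every $\varsigma\in M^1(\Z^n\times\T^n)\cap M^\infty(\Z^n\times\T^n)$, that $\mathfrak{L}^{g,g}_{\varsigma}\in S_p$ with
\[
\bigl\Vert \mathfrak{L}^{g,g}_{\varsigma}\bigr\Vert_{S_p}
\le 4^{1/p}\,\Vert g\Vert_{M^1(\Z^n)}^{2}\,\Vert\varsigma\Vert_{M^p(\Z^n\times\T^n)}
= 2^{2/p}\,\Vert g\Vert_{M^1(\Z^n)}^{2}\,\Vert\varsigma\Vert_{M^p(\Z^n\times\T^n)},
\]
the factor $4^{1/p}$ being the geometric mean $4^{1/p}\cdot 1^{1-1/p}$ of the two endpoint constants; this reduces to Propositions~\ref{pro3} and~\ref{pro2} at $p=1$ and $p=\infty$.

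Finally I would remove the restriction $\varsigma\in M^1\cap M^\infty$ exactly as in the second half of the proof of Theorem~\ref{th1}: given $\varsigma\in M^p(\Z^n\times\T^n)$ with $1\le p<\infty$, choose $\varsigma_k\in M^1\cap M^\infty$ with $\varsigma_k\to\varsigma$ in $M^p$; the inequality just established shows that $\{\mathfrak{L}^{g,g}_{\varsigma_k}\}$ is Cauchy in $S_p$, hence converges there, and since $S_p$-convergence implies convergence in $\mathcal{B}(\ell^2(\Z^n))$ the limit agrees with the operator $\mathfrak{L}^{g,g}_{\varsigma}$ already produced by Theorem~\ref{th1}; letting $k\to\infty$ in the bound gives the general case, while $p=\infty$ is Proposition~\ref{pro2}. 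The step I expect to be the main obstacle is the interpolation itself: one must check that the complex method applies across the two distinct scales simultaneously --- modulation spaces realized as retracts of mixed-norm $L^p$, and Schatten classes --- and that the constants combine as the stated geometric mean. Since the modulation-space half of this is exactly what was invoked for Theorem~\ref{th1} and the Schatten-class interpolation is classical, the remaining work is essentially to assemble these two ingredients and keep careful track of constants.
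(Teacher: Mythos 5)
Your proof follows essentially the same route as the paper: the paper's own argument is precisely to interpolate between Proposition~\ref{pro3} (the $S_1$ endpoint) and Proposition~\ref{pro2} (the $S_\infty$ endpoint), using the complex interpolation of modulation spaces on the symbol side and of Schatten classes on the operator side, followed by a density argument. One remark on the constant: your careful bookkeeping yields $2^{2/p}\,\Vert\varsigma\Vert_{M^p}\,\Vert g\Vert_{M^1}^{2}$, since both endpoint bounds carry the factor $\Vert g\Vert_{M^1}^{2}$ and hence so does their geometric mean; the exponent $2/p$ on $\Vert g\Vert_{M^1}$ in the stated theorem cannot be produced by interpolating Propositions~\ref{pro2} and~\ref{pro3} as written (it would require the $S_\infty$ endpoint to hold without any $g$-dependence, as in the normalized-window setting of \cite{won02}). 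This discrepancy is an issue with the statement rather than with your argument, which gives the bound that the quoted propositions actually support.
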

\begin{proof}
The proof follows from Proposition \ref{pro2}, Proposition \ref{pro3} and by interpolation theorems (see \cite{won02}, Theorems 2.10 and 2.11). 
\end{proof}

Next, we improve the constant given in the previous proposition and also we provide a lower bound of the norm $\Vert \mathfrak{L}^{g, g}_{\varsigma} \Vert_{S_1}$, more precisely we have the following.

\begin{theorem}\label{th4}
Let $\varsigma \in M^1(\Z^n \times \T^n)$ and $g \in  M^1(\Z^n)$. Then the localization operator $\mathfrak{L}^{g, g}_{\varsigma}$ is in $S_1$ and we have
\begin{eqnarray*}
\frac{1}{\Vert g \Vert^2_{M^1(\Z^n)}}\; \Vert \tilde{\varsigma} \Vert_{L^1(\Z^n \times \T^n)}
\leq \Vert \mathfrak{L}^{g, g}_{\varsigma} \Vert_{S_1} \leq \Vert g \Vert^2_{M^1(\Z^n)} \; \Vert \varsigma \Vert_{M^1(\Z^n \times \T^n)},
\end{eqnarray*}
where $\tilde{\varsigma}$ is given by $\tilde{\varsigma}(m, w)=\left\langle \mathfrak{L}^{g, g}_{\varsigma}(M_w T_m g), M_w T_m g \right\rangle_{\ell^2(\Z^n)}$.
\end{theorem}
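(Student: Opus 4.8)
The plan is to establish the two inequalities separately. Throughout one may take for granted that $\mathfrak{L}^{g, g}_{\varsigma}$ is trace class: since $\varsigma \in M^1(\Z^n\times\T^n)$, the first part of the proof of Theorem \ref{th2} (equivalently Proposition \ref{pro3}) already gives $\mathfrak{L}^{g, g}_{\varsigma}\in S_1$, so both inequalities concern a genuine $S_1$-norm. The only tools needed are the weak formula (\ref{eq42}), the product estimate (\ref{eq46}), Parseval's identity, and the embeddings $M^1(\Z^n)\hookrightarrow\ell^2(\Z^n)$ and $M^1(\Z^n\times\T^n)\hookrightarrow L^1(\Z^n\times\T^n)$, all used with constant $1$ as elsewhere in the paper.

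For the upper bound I would use the variational description of the trace norm: for any $\mathcal A\in S_1$,
\[
\Vert \mathcal A\Vert_{S_1}=\sup\sum_{n}\bigl|\langle \mathcal A \phi_n,\psi_n\rangle_{\ell^2(\Z^n)}\bigr|,
\]
the supremum being over all pairs of orthonormal bases $\{\phi_n\}_n,\{\psi_n\}_n$ of $\ell^2(\Z^n)$. Expressing each inner product through (\ref{eq42}) with $g_1=g_2=g$ and moving the sum over $n$ inside $\sum_{m\in\Z^n}\int_{\T^n}$ gives
\[
\sum_n\bigl|\langle \mathfrak{L}^{g, g}_{\varsigma}\phi_n,\psi_n\rangle\bigr|\le \sum_{m\in\Z^n}\int_{\T^n}|\varsigma(m,w)|\sum_n|V_g\phi_n(m,w)|\,|V_g\psi_n(m,w)|\,dw.
\]
The Cauchy--Schwarz inequality in $n$, together with the Parseval identity $\sum_n|V_g\phi_n(m,w)|^2=\sum_n|\langle\phi_n,M_wT_mg\rangle_{\ell^2(\Z^n)}|^2=\Vert g\Vert_{\ell^2(\Z^n)}^2$ (and the analogous one for $\psi_n$), bounds the right-hand side by $\Vert g\Vert^2_{\ell^2(\Z^n)}\,\Vert\varsigma\Vert_{L^1(\Z^n\times\T^n)}$. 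Taking the supremum over the orthonormal bases and invoking the two embeddings then yields $\Vert \mathfrak{L}^{g, g}_{\varsigma}\Vert_{S_1}\le\Vert g\Vert^2_{M^1(\Z^n)}\Vert\varsigma\Vert_{M^1(\Z^n\times\T^n)}$, the announced sharpening of Proposition \ref{pro3}.

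For the lower bound I would write the Schmidt decomposition $\mathfrak{L}^{g, g}_{\varsigma}=\sum_n s_n\langle\cdot,e_n\rangle_{\ell^2(\Z^n)}f_n$, where $s_n=s_n(\mathfrak{L}^{g, g}_{\varsigma})\ge 0$, the families $\{e_n\}_n,\{f_n\}_n$ are orthonormal and $\sum_n s_n=\Vert\mathfrak{L}^{g, g}_{\varsigma}\Vert_{S_1}$. Substituting this into the definition of $\tilde\varsigma$ and using $\langle M_wT_mg,e_n\rangle_{\ell^2(\Z^n)}=\overline{V_ge_n(m,w)}$ and $\langle f_n,M_wT_mg\rangle_{\ell^2(\Z^n)}=V_gf_n(m,w)$ produces the pointwise identity
\[
\tilde\varsigma(m,w)=\sum_n s_n\,\overline{V_ge_n(m,w)}\;V_gf_n(m,w),
\]
the series converging absolutely because $|s_n\overline{V_ge_n(m,w)}V_gf_n(m,w)|\le s_n\Vert g\Vert^2_{\ell^2(\Z^n)}$. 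One then estimates in $M^1$ term by term: by invariance of the $M^1$-norm under complex conjugation, $\Vert\overline{V_ge_n}\cdot V_gf_n\Vert_{M^1(\Z^n\times\T^n)}=\Vert V_ge_n\cdot\overline{V_gf_n}\Vert_{M^1(\Z^n\times\T^n)}$, and by (\ref{eq46}) with $g_1=g_2=g$, $f=e_n$, $h=f_n$ this is at most $\Vert e_n\Vert_{\ell^2(\Z^n)}\Vert f_n\Vert_{\ell^2(\Z^n)}\Vert g\Vert^2_{M^1(\Z^n)}=\Vert g\Vert^2_{M^1(\Z^n)}$. Summing over $n$ shows the series for $\tilde\varsigma$ converges in $M^1(\Z^n\times\T^n)$, whence $\tilde\varsigma\in M^1(\Z^n\times\T^n)$ with $\Vert\tilde\varsigma\Vert_{M^1(\Z^n\times\T^n)}\le\Vert g\Vert^2_{M^1(\Z^n)}\sum_n s_n=\Vert g\Vert^2_{M^1(\Z^n)}\Vert\mathfrak{L}^{g, g}_{\varsigma}\Vert_{S_1}$; dividing by $\Vert g\Vert^2_{M^1(\Z^n)}$ gives the left-hand inequality.

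The interchanges of $\sum_n$ with $\sum_{m\in\Z^n}\int_{\T^n}$, and the convergence of the Schmidt series, are controlled by the uniform Parseval bound $\sum_n|V_g\phi_n(m,w)|^2=\Vert g\Vert_{\ell^2(\Z^n)}^2$ and by $\sum_n s_n<\infty$, so these are routine. I expect the only delicate point to be the bookkeeping around (\ref{eq46}): it must be read with a real, symmetric auxiliary window (of $L^1$-norm $\le 1$, as in the proof of Proposition \ref{pro1}), so that the $M^1$-norm is genuinely conjugation-invariant and no spurious constant appears. This is precisely where the sharp constant $1$, rather than the $4$ of Proposition \ref{pro3}, comes out, and I anticipate it will be the main thing to get right; all the functional-analytic steps are standard.
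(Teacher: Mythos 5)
Your proposal is correct and follows essentially the same route as the paper: both inequalities rest on the canonical (Schmidt) form of the compact operator, with the upper bound obtained from Cauchy--Schwarz in $n$ plus the Parseval/Bessel identity $\sum_n|V_g\phi_n(m,w)|^2=\Vert g\Vert_{\ell^2(\Z^n)}^2$, and the lower bound from substituting the Schmidt series into $\tilde\varsigma$ and controlling it by $\Vert g\Vert^2_{M^1(\Z^n)}\sum_n s_n$. The only cosmetic difference is in the lower bound, where you estimate each term $s_n\,\overline{V_ge_n}\,V_gf_n$ directly in $M^1(\Z^n\times\T^n)$ via (\ref{eq46}), whereas the paper first bounds $|\tilde\varsigma(m,w)|$ pointwise by the arithmetic--geometric mean inequality, passes to $\Vert\tilde\varsigma\Vert_{L^1(\Z^n\times\T^n)}$, and then uses the same window-normalized convolution bound to return to the $M^1$-norm; the two bookkeeping schemes yield the identical constant.
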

\begin{proof}
Since $\varsigma \in M^1(\Z^n \times \T^n)$, by Proposition \ref{pro3}, $\mathfrak{L}^{g, g}_{\varsigma}$ is in $S_1$. Using the canonical form of compact operators (see \cite{won02}, Theorem 2.2), we obtain 
\begin{equation}\label{eq49}
\mathfrak{L}^{g, g}_{\varsigma}f =\sum_{n=1}^{\infty} s_n(\mathfrak{L}^{g, g}_{\varsigma}) \langle f, v_n \rangle_{\ell^2(\Z^n)} u_n,
\end{equation}
where $\{s_n(\mathfrak{L}^{g, g}_{\varsigma}) \}_n$ are the positive singular values of $\mathfrak{L}^{g, g}_{\varsigma}$, $\{ v_n \}_n$ is an orthonormal basis for the orthogonal complement of the null space of $\mathfrak{L}^{g, g}_{\varsigma}$ consisting of eigenvectors of $|\mathfrak{L}^{g, g}_{\varsigma}|$ and $\{ u_n \}_n$ is an orthonormal set in $\ell^2(\Z^n)$. Then we have
\[ \sum_{n=1}^{\infty}  \langle \mathfrak{L}^{g, g}_{\varsigma} v_n, u_n \rangle_{\ell^2(\Z^n)}   = \sum_{n=1}^{\infty} s_n(\mathfrak{L}^{g, g}_{\varsigma})= \Vert \mathfrak{L}^{g, g}_{\varsigma} \Vert_{S_1}.  \]
Now, using Cauchy--Schwarz's inequality and Bessel's inequality, we get
\begin{eqnarray*}
\Vert \mathfrak{L}^{g, g}_{\varsigma} \Vert_{S_1} 
& = &  \sum_{n=1}^{\infty}  \langle \mathfrak{L}^{g, g}_{\varsigma} v_n, u_n \rangle_{\ell^2(\Z^n)} \\
& = & \sum_{n=1}^{\infty} \sum_{m \in \Z^n}  \int_{\T^n} \varsigma(m, w) \; V_{g} v_n (m,w) \; \overline{V_{g} u_n (m,w)} \; dw \\
& \leq & \sum_{m \in \Z^n}  \int_{\T^n} |\varsigma(m, w)| \left(\sum_{n=1}^{\infty} |V_{g} v_n (m,w)|^2 \right)^{1/2} \left(\sum_{n=1}^{\infty} |V_{g} u_n (m,w)|^2 \right)^{1/2} dw \\
& \leq & \Vert \varsigma \Vert_{L^1(\Z^n \times \T^n)} \;\Vert g \Vert^2_{\ell^2(\Z^n)}  \\
& \leq & \Vert \varsigma \Vert_{M^1(\Z^n \times \T^n)}\; \Vert g \Vert^2_{M^1(\Z^n)}.
\end{eqnarray*}
Next, we show that $\tilde{\varsigma} \in L^1(\Z^n \times \T^n)$.  Using formula (\ref{eq49}), we obtain 
\begin{eqnarray*}
|\tilde{\varsigma}(m, w)|
& = & \left|\left\langle \mathfrak{L}^{g, g}_{\varsigma}( M_w T_m g),  M_w T_m g \right\rangle_{\ell^2(\Z^n)}\right| \\
& = & \left| \sum_{n=1}^{\infty} s_n(\mathfrak{L}^{g, g}_{\varsigma}) \left\langle  M_w T_m g, v_n \right\rangle_{\ell^2(\Z^n)} \left\langle u_n,  M_w T_m g \right\rangle_{\ell^2(\Z^n)} \right| \\
& \leq & \frac{1}{2} \sum_{n=1}^{\infty} s_n(\mathfrak{L}^{g, g}_{\varsigma}) \left( \left| \left\langle  M_w T_m g, v_n \right\rangle_{\ell^2(\Z^n)}  \right|^2 + \left| \left\langle  M_w T_m g, u_n  \right\rangle_{\ell^2(\Z^n)} \right|^2 \right). 
\end{eqnarray*}  
Now, using Plancherel's formula (\ref{eq17}), we get 
\begin{eqnarray*}
&& \Vert \tilde{\varsigma} \Vert_{L^1(\Z^n \times \T^n)} 
= \sum_{m \in \Z^n}  \int_{\T^n} |\tilde{\varsigma}(m, w)| \; dw  \\
&& \leq \frac{1}{2} \sum_{n=1}^{\infty} s_n(\mathfrak{L}^{g, g}_{\varsigma}) \sum_{m \in \Z^n} \int_{\T^n} \left(  \left| \left\langle  M_w T_m g, v_n \right\rangle_{\ell^2(\Z^n)} \right|^2  + \left| \left\langle  M_w T_m g, u_n  \right\rangle_{\ell^2(\Z^n)} \right|^2  \right) dw  \\
&& \leq  \Vert g \Vert^2_{M^1(\Z^n)} \sum_{n=1}^{\infty} s_n(\mathfrak{L}^{g, g}_{\varsigma})  \\
&& = \Vert g \Vert^2_{M^1(\Z^n)} \; \Vert \mathfrak{L}^{g, g}_{\varsigma} \Vert_{S_1} .
\end{eqnarray*} 
This completes the proof of the theorem.
\end{proof}

\subsection{$M^p(\Z^n)$ Boundedness}
 
In this section, we prove that the localization operators $\mathfrak{L}^{g_1, g_2}_{\varsigma} : M^p(\Z^n) \to M^p(\Z^n)$ are bounded. We begin with the following propositions. 

\begin{proposition}\label{pro4} 
Let $\varsigma \in M^1(\Z^n \times \T^n)$, $g_1 \in  M^{p'}(\Z^n)$ and $g_2 \in  M^p(\Z^n)$, for $1 \leq p \leq \infty$. Then the localization operator $\mathfrak{L}^{g_1, g_2}_{\varsigma}: M^p(\Z^n) \to M^p(\Z^n)$ is a bounded linear operator, and we have
\[\Vert \mathfrak{L}^{g_1, g_2}_{\varsigma} \Vert_{\mathcal{B}(M^p(\Z^n))} \leq  \Vert \varsigma  \Vert_{M^1(\Z^n \times \T^n)} \; \Vert g_1 \Vert_{ M^{p'}(\Z^n)} \; \Vert g_2 \Vert_{M^p(\Z^n)}. \] 
\end{proposition}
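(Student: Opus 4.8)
The plan is to prove the $M^p$-boundedness estimate directly from the weak definition \eqref{eq42} of $\mathfrak{L}^{g_1,g_2}_{\varsigma}$, by testing against $M^{p'}(\Z^n)$ and exploiting the duality $(M^p)' = M^{p'}$ together with the convolution structure of the STFT. First I would fix $f \in M^p(\Z^n)$ and $h \in M^{p'}(\Z^n)$ and write, as in the boundedness proofs above,
\[
\left| \left\langle \mathfrak{L}^{g_1, g_2}_{\varsigma}f, h \right\rangle_{\ell^2(\Z^n)} \right|
\leq \sum_{m \in \Z^n} \int_{\T^n} |\varsigma(m,w)| \; \left| V_{g_1}f(m,w) \right| \; \left| V_{g_2}h(m,w) \right| \; dw.
\]
The key point is then to estimate the product $V_{g_1}f \cdot \overline{V_{g_2}h}$ not in $L^\infty$ but in a space paired against $L^1$ via $\|\varsigma\|_{M^1}$, exactly as in inequality \eqref{eq45}: since $M^\infty(\Z^n \times \T^n)$ is the dual of $M^1(\Z^n \times \T^n)$, the sum above is bounded by $\|\varsigma\|_{M^1(\Z^n \times \T^n)} \; \|V_{g_1}f \cdot \overline{V_{g_2}h}\|_{M^\infty(\Z^n \times \T^n)}$.

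Next I would bound $\|V_{g_1}f \cdot \overline{V_{g_2}h}\|_{M^\infty(\Z^n \times \T^n)}$ by $\|V_{g_1}f \cdot \overline{V_{g_2}h}\|_{L^1(\Z^n \times \T^n)}$ (using the inclusion $M^1 \subset M^\infty$ after computing the $M^\infty$-norm as an $L^1$-norm of a convolution with a compactly normalized window, just as in the chain \eqref{eq46}), and then apply Hölder's inequality with exponents $p, p'$ on $\Z^n \times \T^n$:
\[
\left\| V_{g_1}f \cdot \overline{V_{g_2}h} \right\|_{L^1(\Z^n \times \T^n)}
\leq \left\| V_{g_1}f \right\|_{L^{p'}(\Z^n \times \T^n)} \left\| V_{g_2}h \right\|_{L^{p}(\Z^n \times \T^n)}.
\]
Here the roles of $p$ and $p'$ are assigned so that the first factor is $\|f\|_{M^{p'}}$ — wait, that is not what we want; instead one assigns $V_{g_1}f \in L^p$ and $V_{g_2}h \in L^{p'}$, giving $\|f\|_{M^p} \le C\|f\|$ with respect to the window $g_1$ and $\|h\|_{M^{p'}}$ with respect to $g_2$. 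The subtle bookkeeping is that the $M^p$-norm of $f$ is defined via a fixed Schwartz window, not via $g_1 \in M^{p'}$; so I would invoke the standard window-change lemma (the independence of $M^p$ from the window, together with the estimate $\|V_{g_1}f\|_{L^p} \lesssim \|g_1\|_{M^{p'}} \|f\|_{M^p}$, which follows from $V_{g_1}f = V_\phi f \ast \text{(something built from } g_1)$ and Proposition \ref{pro6} / Young's inequality). The same for $\|V_{g_2}h\|_{L^{p'}} \lesssim \|g_2\|_{M^p} \|h\|_{M^{p'}}$.

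Combining these, $\left| \langle \mathfrak{L}^{g_1, g_2}_{\varsigma}f, h \rangle \right| \leq \|\varsigma\|_{M^1} \|g_1\|_{M^{p'}} \|g_2\|_{M^p} \|f\|_{M^p} \|h\|_{M^{p'}}$, and taking the supremum over $h$ in the unit ball of $M^{p'}$ and using $(M^p)' = M^{p'}$ (valid for $p < \infty$; the cases $p = 1, \infty$ handled by the direct $L^\infty$–$L^1$ pairing or by density) yields the claimed bound on $\|\mathfrak{L}^{g_1, g_2}_{\varsigma}\|_{\mathcal{B}(M^p(\Z^n))}$. I expect the main obstacle to be the window-change step: one must make precise the inequality $\|V_{g_1}f\|_{L^p(\Z^n \times \T^n)} \leq \|g_1\|_{M^{p'}(\Z^n)} \|f\|_{M^p(\Z^n)}$ for a window $g_1$ that is merely in $M^{p'}$ rather than Schwartz, which requires writing the STFT of $f$ with respect to $g_1$ as a convolution on the time-frequency plane of the STFT with respect to a fixed window against the STFT of $g_1$, and then applying the convolution inequality of Proposition \ref{pro6}; this is exactly the discrete analogue of Gröchenig's Lemma on the boundedness of $V_g : M^p \to L^p$, and the cleanest route is to cite it in the $\Z^n$ setting and verify the constants.
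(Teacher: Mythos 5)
Your overall strategy (test against $h\in M^{p'}(\Z^n)$ and use $(M^p)'=M^{p'}$) is the same as the paper's, but the way you estimate the bilinear form is different and contains a genuine gap. You propose to bound $\Vert V_{g_1}f\cdot\overline{V_{g_2}h}\Vert_{L^1(\Z^n\times\T^n)}$ by H\"older with exponents $p,p'$ and then invoke a window-change inequality of the form $\Vert V_{g_1}f\Vert_{L^p(\Z^n\times\T^n)}\lesssim \Vert g_1\Vert_{M^{p'}(\Z^n)}\,\Vert f\Vert_{M^p(\Z^n)}$. That inequality is false in general. The standard window-change lemma gives $|V_{g_1}f|\le \tfrac{1}{\Vert\gamma\Vert_2^2}\,|V_\gamma f|\ast|V_{g_1}\gamma|$, and Young's inequality (Proposition~\ref{pro6}) then yields $\Vert V_{g_1}f\Vert_{L^p}\lesssim\Vert g_1\Vert_{M^{1}}\Vert f\Vert_{M^p}$: the window must lie in $M^1$, not merely in $M^{p'}$. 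To see that your version fails, take $p=1$, so the claim reads $\Vert V_{g_1}f\Vert_{L^1}\lesssim\Vert g_1\Vert_{M^\infty}\Vert f\Vert_{M^1}$; choosing $g_1=\sum_{j=1}^N M_{w_j}T_{m_j}\gamma$ with widely separated time-frequency shifts of a fixed window $\gamma$ keeps $\Vert g_1\Vert_{M^\infty}$ bounded uniformly in $N$, while $\Vert V_{g_1}f\Vert_{L^1}\approx N\,\Vert V_\gamma f\Vert_{L^1}$ grows linearly. So the step you yourself flag as ``the main obstacle'' cannot be repaired under the hypotheses $g_1\in M^{p'}$, $g_2\in M^p$ of the proposition, and the attempted Young-type computation only closes at the exponent $1/p+1/p'-1=0$, i.e.\ it produces an $L^\infty$ bound on $V_{g_1}f$, not an $L^p$ bound.

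That $L^\infty$ endpoint is in fact all that is needed, and it is exactly what the paper uses. By the duality $(M^p)'=M^{p'}$ and the time-frequency shift invariance of modulation norms, one has the pointwise estimate
\[
|V_{g_1}f(m,w)|=\left|\left\langle f, M_wT_mg_1\right\rangle\right|\le \Vert f\Vert_{M^p(\Z^n)}\,\Vert g_1\Vert_{M^{p'}(\Z^n)},
\qquad
|V_{g_2}h(m,w)|\le \Vert h\Vert_{M^{p'}(\Z^n)}\,\Vert g_2\Vert_{M^{p}(\Z^n)},
\]
uniformly in $(m,w)$ (this is \eqref{eq52}); inserting these into \eqref{eq42} and integrating $|\varsigma|$ over $\Z^n\times\T^n$, using $\Vert\varsigma\Vert_{L^1}\le\Vert\varsigma\Vert_{M^1}$, gives the stated bound directly, with no H\"older splitting on the time-frequency plane and no window-change lemma for non-$M^1$ windows. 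I recommend replacing your H\"older-plus-window-change step by this pointwise duality bound; the rest of your argument (the duality pairing and the treatment of the endpoint cases $p=1,\infty$) then goes through.
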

\begin{proof}
Let $f \in M^p(\Z^n)$, $1 \leq p \leq \infty$ and $g \in M^{p'}(\Z^n)$. Then from H\"older's inequality, we have 
\begin{equation}\label{eq52}
\left| V_{g} f (m, w) \right| \leq \Vert f  \Vert_{M^p(\Z^n)} \; \Vert g  \Vert_{M^{p'}(\Z^n)}. 
\end{equation} 
For every $f \in M^p(\Z^n)$ and $h \in M^{p'}(\Z^n)$, using the relations (\ref{eq42}) and (\ref{eq52}), we obtain
\begin{eqnarray*}
\left| \left\langle \mathfrak{L}^{g_1, g_2}_{\varsigma}f, h \right\rangle \right| 
& \leq & \sum_{m \in \Z^n}  \int_{\T^n} |\varsigma(m, w)| \; \left|V_{g_1}f(m, w) \right| \left|   V_{g_2}h(m, w) \right| \; dw \\
&& \leq \Vert \varsigma \Vert_{M^1(\Z^n \times \T^n)} \; \Vert f \Vert_{M^p(\Z^n)} \;  \Vert g_1 \Vert_{M^{p'}(\Z^n)} \; \Vert h \Vert_{M^{p'}(\Z^n)} \; \Vert g_2 \Vert_{M^p(\Z^n)}. 
\end{eqnarray*}
Hence,
\[\Vert \mathfrak{L}^{g_1, g_2}_{\varsigma} \Vert_{\mathcal{B}(M^p(\Z^n))} \leq  \Vert \varsigma  \Vert_{M^1(\Z^n \times \T^n)} \; \Vert g_1 \Vert_{ M^{p'}(\Z^n)} \; \Vert g_2 \Vert_{M^p(\Z^n)}. \] 
\end{proof}

Next, we obtain an $M^p(\Z^n)$-boundedness result using the Schur technique. The estimate obtained for the norm $\Vert \mathfrak{L}^{g_1, g_2}_{\varsigma} \Vert_{\mathcal{B}(M^p(\Z^n))}$ is different from the previous Proposition. 

\begin{proposition}\label{pro5}
Let $\varsigma \in M^1(\Z^n \times \T^n)$ and $g_1, g_2 \in  M^1(\Z^n) \cap \ell^\infty(\Z^n)$. Then there exists a bounded linear operator $\mathfrak{L}^{g_1, g_2}_{\varsigma}: M^p(\Z^n) \to  M^p(\Z^n) $, $1 \leq p \leq \infty$ such that
\begin{eqnarray*}
\Vert \mathfrak{L}^{g_1, g_2}_{\varsigma} \Vert_{\mathcal{B}(M^p(\Z^n))} 
\leq \max(\Vert g_1 \Vert_{M^1(\Z^n)} \Vert g_2 \Vert_{\ell^\infty(\Z^n)}, \Vert g_1 \Vert_{\ell^\infty(\Z^n)} \Vert g_2 \Vert_{M^1(\Z^n)}) \; \Vert \varsigma \Vert_{M^1(\Z^n \times \T^n)}.
\end{eqnarray*}
\end{proposition}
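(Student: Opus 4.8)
The plan is to work from the weak formulation (\ref{eq42}), dominate the two short-time Fourier transforms occurring there by convolutions, and then reduce the desired estimate to a Schur test for a positive kernel on $L^p(\Z^n\times\T^n)$. Let $g_0\in\mathcal S(\Z^n)$ with $\|g_0\|_{\ell^2(\Z^n)}=1$ be the window used to define the modulation norms, and recall the standard pointwise bound for the STFT (see \cite{gro01}): for $\phi\in M^1(\Z^n)$ and $u\in\mathcal S'(\Z^n)$,
\[
\bigl|V_\phi u(\zeta)\bigr|\le\bigl(|V_{g_0}u|*|V_{g_0}\phi|^{\vee}\bigr)(\zeta),\qquad\zeta\in\Z^n\times\T^n ,
\]
where $F^{\vee}(\zeta)=F(-\zeta)$. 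Applying this to $V_{g_1}f$ and to $V_{g_2}h$ in (\ref{eq42}) and using Tonelli, for $f$ and $h$ in a dense subspace of $M^p(\Z^n)$ and $M^{p'}(\Z^n)$ respectively — e.g.\ $\mathcal S(\Z^n)\subseteq\ell^2(\Z^n)$, where the series in (\ref{eq41}) converges absolutely — I would obtain
\[
\left|\left\langle\mathfrak{L}^{g_1,g_2}_{\varsigma} f,h\right\rangle_{\ell^2(\Z^n)}\right|\le\sum_{\eta}\ \sum_{\mu}\ \mathcal K(\eta,\mu)\,\bigl|V_{g_0}f(\eta)\bigr|\,\bigl|V_{g_0}h(\mu)\bigr| ,
\]
where $\sum_\eta,\sum_\mu$ run over $\Z^n\times\T^n$ (the $\T^n$–components being integrated) and the kernel is
\[
\mathcal K(\eta,\mu)=\sum_{\zeta}|\varsigma(\zeta)|\,\bigl|V_{g_0}g_1(\eta-\zeta)\bigr|\,\bigl|V_{g_0}g_2(\mu-\zeta)\bigr| .
\]

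Next I would apply the Schur test to the nonnegative kernel $\mathcal K$. Setting
\[
A=\sup_{\eta}\ \bigl\|\mathcal K(\eta,\cdot)\bigr\|_{L^1(\Z^n\times\T^n)} ,\qquad B=\sup_{\mu}\ \bigl\|\mathcal K(\cdot,\mu)\bigr\|_{L^1(\Z^n\times\T^n)} ,
\]
the $L^1$– and $L^\infty$–estimates for the associated integral operator together with interpolation give
\[
\left|\left\langle\mathfrak{L}^{g_1,g_2}_{\varsigma} f,h\right\rangle\right|\le A^{1/p}B^{1/p'}\,\|V_{g_0}f\|_{L^p}\,\|V_{g_0}h\|_{L^{p'}}\le\max(A,B)\,\|f\|_{M^p(\Z^n)}\,\|h\|_{M^{p'}(\Z^n)} .
\]
Since $(M^p(\Z^n))'=M^{p'}(\Z^n)$ for $1\le p<\infty$ and $M^\infty(\Z^n)=(M^1(\Z^n))'$, taking the supremum over $h$ yields $\|\mathfrak{L}^{g_1,g_2}_{\varsigma}\|_{\mathcal B(M^p(\Z^n))}\le\max(A,B)$ on the dense subspace, whence — by a continuity/adjoint argument as in the proof of Theorem \ref{th1}, using $(\mathfrak{L}^{g_1,g_2}_{\varsigma})^*=\mathfrak{L}^{g_2,g_1}_{\overline\varsigma}$ for the case $p=\infty$ — the existence of the bounded extension to $M^p(\Z^n)$ with the same norm bound.

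It remains to estimate $A$ and $B$, which is where the hypotheses $g_1,g_2\in M^1(\Z^n)\cap\ell^\infty(\Z^n)$ enter. For $A$ I would first carry out the $\mu$–summation, $\sum_\mu|V_{g_0}g_2(\mu-\zeta)|=\|V_{g_0}g_2\|_{L^1}=\|g_2\|_{M^1(\Z^n)}$, and then estimate the remaining convolution in $\eta$ by Young's inequality in the form $\bigl\||\varsigma|*|V_{g_0}g_1|^{\vee}\bigr\|_{L^\infty}\le\|\varsigma\|_{L^1}\,\|V_{g_0}g_1\|_{L^\infty}=\|\varsigma\|_{L^1}\,\|g_1\|_{M^\infty(\Z^n)}$. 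Using the embeddings $M^1(\Z^n\times\T^n)\hookrightarrow L^1(\Z^n\times\T^n)$ and $\ell^\infty(\Z^n)\hookrightarrow M^\infty(\Z^n)$ from Section \ref{sec3}, this gives $A\le\|g_1\|_{\ell^\infty(\Z^n)}\,\|g_2\|_{M^1(\Z^n)}\,\|\varsigma\|_{M^1(\Z^n\times\T^n)}$; the symmetric computation (summing first in $\eta$) gives $B\le\|g_1\|_{M^1(\Z^n)}\,\|g_2\|_{\ell^\infty(\Z^n)}\,\|\varsigma\|_{M^1(\Z^n\times\T^n)}$. Hence $\max(A,B)$ is bounded by the asserted right-hand side.

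The delicate point — and the place I expect to spend the most care — is precisely this last step: the kernel $\mathcal K$ is itself a convolution of a product of \emph{three} factors, and Fubini/Young must be arranged so that $\varsigma$ is always paired with its $L^1$-norm (absorbed by $\|\varsigma\|_{M^1}$), one window is paired with its $L^1$-STFT-norm — which \emph{equals} its $M^1$-norm — and the remaining window is paired with its $L^\infty$-STFT-norm, i.e.\ its $M^\infty$-norm, which by $\ell^\infty(\Z^n)\hookrightarrow M^\infty(\Z^n)$ is controlled by the (in general much smaller) $\ell^\infty$-norm. It is this choice of Hölder/Young exponents, together with that embedding, that produces the sharper, asymmetric bound distinguishing this proposition from Proposition \ref{pro4}. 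The remaining routine matters are the justification of the pointwise STFT estimates and of the interchange of summations on the dense class before passing to the limit, and the minor separate treatment of the endpoints $p=1,\infty$ in the duality argument.
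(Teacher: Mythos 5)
Your argument is correct in outline, but it takes a genuinely different route from the paper's. The paper stays in physical space: expanding (\ref{eq41}), it realizes $\mathfrak{L}^{g_1, g_2}_{\varsigma}$ as a discrete integral operator with the kernel $\mathcal{K}(k,l)=\sum_{m}\int_{\T^n}\varsigma(m,w)\,\overline{M_wT_mg_1(l)}\,M_wT_mg_2(k)\,dw$ on $\Z^n\times\Z^n$, bounds the row and column sums by the two quantities appearing in the max (using $|M_wT_mg_i(\cdot)|=|g_i(\cdot-m)|$ together with $\|g_i\|_{\ell^1(\Z^n)}\le\|g_i\|_{M^1(\Z^n)}$ and $\|\varsigma\|_{L^1(\Z^n\times\T^n)}\le\|\varsigma\|_{M^1(\Z^n\times\T^n)}$), and invokes Schur's lemma. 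You instead lift everything to the phase space via the change-of-window inequality $|V_{g_i}u|\le|V_{g_0}u|*|V_{g_0}g_i|^{\vee}$ and run the Schur test for the induced nonnegative kernel on $(\Z^n\times\T^n)\times(\Z^n\times\T^n)$. Your route is more work --- it needs Gr\"ochenig's Lemma 11.3.3, Tonelli on a dense class, and a separate adjoint argument at $p=\infty$ --- but it buys something real: Schur's lemma applied to the paper's kernel yields boundedness on $\ell^p(\Z^n)$, and since $M^p(\Z^n)\neq\ell^p(\Z^n)$ for $p\neq2$ the paper's passage to $\mathcal{B}(M^p(\Z^n))$ is left implicit, whereas your estimate is phrased directly in terms of $\|V_{g_0}f\|_{L^p(\Z^n\times\T^n)}=\|f\|_{M^p(\Z^n)}$ and so proves the $M^p$ statement as written. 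One small caveat: your step $\|g_1\|_{M^\infty(\Z^n)}\le\|g_1\|_{\ell^\infty(\Z^n)}$ carries a window-dependent constant, since $\sup_{m,w}|\langle g_1,M_wT_mg_0\rangle|\le\|g_0\|_{\ell^1(\Z^n)}\|g_1\|_{\ell^\infty(\Z^n)}$; your final bound therefore holds up to a constant depending on $g_0$. This is harmless because modulation norms are only determined up to equivalence, and the paper's own appeal to the embeddings $M^1\hookrightarrow\ell^1$ and $M^1\hookrightarrow L^1$ has exactly the same character, but you should state it.
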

\begin{proof}
Let $\mathcal{K}$ be the function defined on $\Z^n \times \Z^n$ by 
\begin{equation}\label{eq53}
\mathcal{K}(k, l) = \sum_{m \in \Z^n}  \int_{\T^n} \varsigma(m, w)\; \overline{M_w T_m g_1(l)} \; M_w T_m g_2(k) \; dw.
\end{equation}
Then we define 
\[ \mathfrak{L}^{g_1, g_2}_{\varsigma}f(k)= \sum_{l \in \Z^n} \mathcal{K}(k,l) \; f(l). \]
Now, for any $l \in \Z^n$, we obtain
\begin{eqnarray*}
\sum_{k \in \Z^n} | \mathcal{K}(k, l) | 
&\leq & \sum_{k \in \Z^n} \sum_{m \in \Z^n}  \int_{\T^n} |\varsigma(m, w)| \left|\overline{ M_w T_m g_1(l)} \right| \; \left|M_w T_m g_2(k) \right| \; dw  \\
&& \leq \Vert g_1 \Vert_{\ell^\infty(\Z^n)} \; \Vert g_2 \Vert_{M^1(\Z^n)} \; \Vert \varsigma \Vert_{M^1(\Z^n \times \T^n)}, 
\end{eqnarray*}
and for any $k \in \Z^n$, we obtain
\[ \sum_{l \in \Z^n} | \mathcal{K}(k, l) | \leq \Vert g_1 \Vert_{M^1(\Z^n)} \; \Vert g_2 \Vert_{\ell^\infty(\Z^n)} \; \Vert \varsigma \Vert_{M^1(\Z^n \times \T^n)}.   \]
Thus using Schur's lemma (see \cite{fol95}), we conclude that, for $1 \leq p \leq \infty$, $\mathfrak{L}^{g_1, g_2}_{\varsigma}: M^p(\Z^n) \to  M^p(\Z^n) $ is a bounded linear operator, and we have 
\begin{eqnarray*}
\Vert \mathfrak{L}^{g_1, g_2}_{\varsigma} \Vert_{\mathcal{B}(M^p(\Z^n))} 
\leq \max(\Vert g_1 \Vert_{M^1(\Z^n)} \Vert g_2 \Vert_{\ell^\infty(\Z^n)}, \Vert g_1 \Vert_{\ell^\infty(\Z^n)} \Vert g_2 \Vert_{M^1(\Z^n)}) \; \Vert \varsigma \Vert_{M^1(\Z^n \times \T^n)}.
\end{eqnarray*}
\end{proof} 
 
\begin{remark}
From the Proposition \ref{pro5}, we conclude that the bounded linear operator on $M^p(\Z^n)$, $1 \leq p \leq \infty$, obtained in Proposition \ref{pro4} is actually the discrete integral operator on $M^p(\Z^n)$ with the kernel $\mathcal{K}$ given by (\ref{eq53}).
\end{remark}

\begin{theorem}\label{th3}
Let $\varsigma \in L^1(\Z^n \times \T^n)$, $g_1 \in \ell^2(\Z^n)$ and $g_2 \in  M^p(\Z^n)$, for $1 \leq p \leq 2$. Then the localization operator $ \mathfrak{L}^{g_1, g_2}_{\varsigma}$ is in $\mathcal{B}(M^p(\Z^n ))$, and we have
\[ \Vert \mathfrak{L}^{g_1, g_2}_{\varsigma} \Vert_{\mathcal{B}(M^p(\Z^n ))} \leq \Vert \varsigma \Vert_{L^1(\Z^n \times \T^n)} \; \Vert g_1 \Vert_{\ell^2(\Z^n)} \; \Vert g_2 \Vert_{M^p(\Z^n)}. \]
\end{theorem}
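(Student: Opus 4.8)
The plan is to reduce Theorem \ref{th3} to the two endpoint cases $p=1$ and $p=\infty$ and then interpolate in the symbol variable, exactly as was done for the $\ell^2$-boundedness in Theorem \ref{th1}. First I would treat the case $p=1$: if $\varsigma \in M^1(\Z^n\times\T^n)$ and $g_1,g_2\in M^1(\Z^n)$, then since $M^1(\Z^n)\subseteq M^{q'}(\Z^n)$ and $M^1(\Z^n)\subseteq M^q(\Z^n)$ for every $1\le q\le\infty$ (this follows from the inclusion chain $\mathcal{S}(\Z^n)\subset M^1(\Z^n)\subset M^2(\Z^n)\subset M^\infty(\Z^n)$ established in Section \ref{sec3}, together with complex interpolation), Proposition \ref{pro4} applies with $p$ replaced by $q$ and gives
\[
\Vert \mathfrak{L}^{g_1,g_2}_\varsigma \Vert_{\mathcal{B}(M^q(\Z^n))}
\le \Vert \varsigma \Vert_{M^1(\Z^n\times\T^n)}\;\Vert g_1\Vert_{M^{q'}(\Z^n)}\;\Vert g_2\Vert_{M^q(\Z^n)}
\le \Vert \varsigma \Vert_{M^1(\Z^n\times\T^n)}\;\Vert g_1\Vert_{M^1(\Z^n)}\;\Vert g_2\Vert_{M^1(\Z^n)}.
\]
For the case $p=\infty$, I would adapt the argument of Proposition \ref{pro2}: starting from the weak formulation (\ref{eq42}), bound the pairing by $\Vert\varsigma\Vert_{M^\infty(\Z^n\times\T^n)}\,\Vert V_{g_1}f\cdot\overline{V_{g_2}h}\Vert_{M^1(\Z^n\times\T^n)}$ using the duality $(M^1)'=M^\infty$, and then estimate the $M^1$-norm of the product $V_{g_1}f\cdot\overline{V_{g_2}h}$ as in (\ref{eq46}), but now using (\ref{eq52}) (with exponents $q,q'$) in place of Plancherel's formula to get the factor $\Vert f\Vert_{M^q(\Z^n)}\Vert h\Vert_{M^{q'}(\Z^n)}\Vert g_1\Vert_{M^1(\Z^n)}\Vert g_2\Vert_{M^1(\Z^n)}$; testing against all $h$ with $\Vert h\Vert_{M^{q'}(\Z^n)}\le 1$ and invoking $(M^q)'=M^{q'}$ (for $q<\infty$) yields the bound on $\Vert\mathfrak{L}^{g_1,g_2}_\varsigma\Vert_{\mathcal{B}(M^q(\Z^n))}$.

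With the endpoints $p=1$ and $p=\infty$ in hand for each fixed $q$, I would then fix $q$ and $g_1,g_2\in M^1(\Z^n)$ and regard $\varsigma\mapsto\mathfrak{L}^{g_1,g_2}_\varsigma$ as a linear map; it is bounded from $M^1(\Z^n\times\T^n)$ to $\mathcal{B}(M^q(\Z^n))$ and from $M^\infty(\Z^n\times\T^n)$ to $\mathcal{B}(M^q(\Z^n))$, both with norm at most $\Vert g_1\Vert_{M^1(\Z^n)}\Vert g_2\Vert_{M^1(\Z^n)}$. Since the modulation spaces $M^p$ interpolate exactly like the mixed-norm Lebesgue spaces, the Riesz--Thorin interpolation theorem (see \cite{ste56}) gives, for $1<p<\infty$ and $\varsigma\in M^1(\Z^n\times\T^n)\cap M^\infty(\Z^n\times\T^n)$,
\[
\Vert \mathfrak{L}^{g_1,g_2}_\varsigma \Vert_{\mathcal{B}(M^q(\Z^n))}
\le \Vert \varsigma \Vert_{M^p(\Z^n\times\T^n)}\;\Vert g_1\Vert_{M^1(\Z^n)}\;\Vert g_2\Vert_{M^1(\Z^n)}.
\]
Finally, for general $\varsigma\in M^p(\Z^n\times\T^n)$ I would take a sequence $\varsigma_j\in M^1\cap M^\infty$ with $\varsigma_j\to\varsigma$ in $M^p$, note that $\{\mathfrak{L}^{g_1,g_2}_{\varsigma_j}\}$ is Cauchy in $\mathcal{B}(M^q(\Z^n))$, and pass to the limit — this is verbatim the density/completeness argument in the second half of the proof of Theorem \ref{th1}.

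The step I expect to be the main obstacle is the $p=\infty$ endpoint estimate: it requires reproving the chain (\ref{eq46}) with $M^q$/$M^{q'}$ exponents rather than the self-dual $\ell^2$ setting, which means I must check that the Young-type convolution bound and the product estimate $\Vert V_{g_1}f\cdot\overline{V_{g_2}h}\Vert_{L^1(\Z^n\times\T^n)}\le \Vert V_{g_1}f\Vert_{L^q(\Z^n\times\T^n)}\Vert V_{g_2}h\Vert_{L^{q'}(\Z^n\times\T^n)}$ survive, and that $\Vert V_{g_1}f\Vert_{L^q}$ is controlled — here one uses $f\in M^q$, $g_1\in M^1\subseteq M^{q'}$, and the convolution relation $\Vert V_{g_1}f\Vert_{L^q}=\Vert f*\overline{M_w g_1}\Vert_{L^q}$ together with Proposition \ref{pro6} (or its pointwise consequence (\ref{eq52})). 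The second potential subtlety is the endpoint $q=\infty$ of the target space, where the duality $(M^q)'=M^{q'}$ fails; there one should instead argue directly from Proposition \ref{pro5} (the Schur-lemma bound already covers $q=\infty$) combined with the same interpolation in $p$, or remark that $\mathfrak{L}^{g_1,g_2}_\varsigma$ acts on $M^\infty$ as the transpose of its action on $M^1$.
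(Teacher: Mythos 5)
Your proposal is correct, but it takes a genuinely different route from the paper. The paper proves the bound for all $1\le p\le\infty$ in a single direct argument: it pairs the symbol against $V_{g_1}f\cdot\overline{V_{g_2}h}$ using the duality between $M^p(\Z^n\times\T^n)$ and $M^{p'}(\Z^n\times\T^n)$, and then establishes the one estimate
$\Vert V_{g_1}f\cdot\overline{V_{g_2}h}\Vert_{M^{p'}(\Z^n\times\T^n)}\le \Vert f\Vert_{M^q(\Z^n)}\Vert h\Vert_{M^{q'}(\Z^n)}\Vert g_1\Vert_{M^1(\Z^n)}\Vert g_2\Vert_{M^1(\Z^n)}$
by the same chain as in (\ref{eq46}) — Young's inequality against an auxiliary window with $\Vert g\Vert_{L^{p'}}\le 1$, H\"older with exponents $q,q'$, and the convolution relation $M^q * M^1\hookrightarrow M^q$ — so no interpolation in the symbol and no density/limiting argument are needed (see (\ref{eq47})--(\ref{eq48})). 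Your route — the endpoint $p=1$ via Proposition \ref{pro4} together with $M^1\hookrightarrow M^r$, the endpoint $p=\infty$ by redoing Proposition \ref{pro2} with $M^q/M^{q'}$ exponents, then Riesz--Thorin in the symbol variable and a Cauchy-sequence limit — is the strategy of Theorem \ref{th1} transplanted to $M^q$ targets, and it goes through for the same reasons those earlier proofs do. What each approach buys: the paper's argument is shorter and uniform in $p$, but it silently invokes the duality $(M^q)'=M^{q'}$ even at $q=\infty$, where one must really read it as $M^\infty=(M^1)'$; your version surfaces that subtlety explicitly and correctly proposes the transpose/Schur remedy. On the other hand, the ``main obstacle'' you flag at the $p=\infty$ endpoint is exactly the product-of-STFTs computation the paper carries out in (\ref{eq48}), so your decomposition does not actually avoid any of the technical work — it just repackages it behind an extra interpolation and density layer.
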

\begin{proof}
Let $f \in M^p(\Z^n)$ and $h \in M^{p'}(\Z^n)$, where $p'$ is the conjugate exponent of $p$. Using the duality between the modulation spaces $ M^p(\Z^n)$ and $M^{p'}(\Z^n)$, we obtain 
\begin{eqnarray*}
\left| \left\langle \mathfrak{L}^{g_1, g_2}_{\varsigma}f, h \right\rangle \right| 
& \leq & \sum_{m \in \Z^n}  \int_{\T^n} |\varsigma(m, w)| \; \left|V_{g_1}f(m, w) \right| \; \left| V_{g_2}h(m, w) \right| \; dw  \\ 
& \leq & \left\Vert \varsigma \right\Vert_{L^1(\Z^n \times \T^n)} \Vert f \Vert_{M^p(\Z^n)} \Vert g_1 \Vert_{M^{p'}(\Z^n)} \Vert h \Vert_{M^{p'}(\Z^n)} \Vert g_2 \Vert_{M^p(\Z^n)} .
\end{eqnarray*}
Since $1 \leq p \leq 2$, we have $p' \geq 2 $ and $\ell^2(\Z^n) \subset M^{p'}(\Z^n)$. Using $\Vert g_1 \Vert_{M^{p'}(\Z^n)} \leq \Vert g_1 \Vert_{\ell^2(\Z^n)}$, we obtain
\[ \Vert \mathfrak{L}^{g_1, g_2}_{\varsigma} \Vert_{\mathcal{B}(M^p(\Z^n ))} \leq \Vert \varsigma \Vert_{L^1(\Z^n \times \T^n)} \; \Vert g_1 \Vert_{\ell^2(\Z^n)} \; \Vert g_2 \Vert_{M^p(\Z^n)}. \]
This completes the proof of the theorem.
\end{proof}

\subsection{Compactness of $\mathfrak{L}^{g_1, g_2}_{\varsigma}$ for symbols in $M^1(\Z^n \times \T^n)$}

In this section, we prove that the localization operators $\mathfrak{L}^{g_1, g_2}_{\varsigma}: M^p(\Z^n) \to M^p(\Z^n), \; 1 < p< \infty$ are compact for symbols $\varsigma$ in $M^1(\Z^n \times \T^n)$. Let us start with the following proposition.

\begin{proposition}
Let $\varsigma \in M^1(\Z^n \times \T^n)$ and $g_1, g_2 \in  M^1(\Z^n)$. Then, for fixed $2 \leq p< \infty$, the localization operator $\mathfrak{L}^{g_1, g_2}_{\varsigma}: M^p(\Z^n) \to M^p(\Z^n)$ is compact.
\end{proposition}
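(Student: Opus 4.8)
The plan is to obtain compactness of $\mathfrak{L}^{g_1,g_2}_{\varsigma}$ on $M^p(\Z^n)$, $1<p<\infty$, by interpolating between a compactness statement at the Hilbert-space endpoint $p=2$ and the boundedness statements already established at the other endpoints. First I would record that, since $\varsigma\in M^1(\Z^n\times\T^n)$, the first part of the proof of Theorem \ref{th2} places $\mathfrak{L}^{g_1,g_2}_{\varsigma}$ in $S_1$, so in particular $\mathfrak{L}^{g_1,g_2}_{\varsigma}$ is a compact operator on $\ell^2(\Z^n)=M^2(\Z^n)$. Next I would apply Theorem \ref{th3} with $p=1$ to conclude that $\mathfrak{L}^{g_1,g_2}_{\varsigma}$ is a bounded linear operator on $M^q(\Z^n)$ for every $1\le q\le\infty$, with operator norm controlled by $\Vert\varsigma\Vert_{M^1(\Z^n\times\T^n)}\,\Vert g_1\Vert_{M^1(\Z^n)}\,\Vert g_2\Vert_{M^1(\Z^n)}$. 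These two facts refer to one and the same operator, namely the integral operator with kernel $\mathcal{K}$ from the Remark following Proposition \ref{pro5}, so it makes sense on the relevant interpolation couples.

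The core of the argument is then an interpolation step. I would recall, as already invoked in the proof of Theorem \ref{th1}, that the modulation spaces $M^q(\Z^n)$ form a complex interpolation scale behaving exactly like the mixed-norm $L^q$ spaces, so that $[M^{q_0}(\Z^n),M^{q_1}(\Z^n)]_{\theta}=M^p(\Z^n)$ whenever $q_0\ne q_1$, $0<\theta<1$ and $\frac{1}{p}=\frac{1-\theta}{q_0}+\frac{\theta}{q_1}$. Fix $p\in(1,\infty)$ with $p\ne 2$: if $1<p<2$ I would interpolate the couple $(M^1(\Z^n),M^2(\Z^n))$, and if $2<p<\infty$ I would pick a finite exponent $q_0>p$ and interpolate the couple $(M^2(\Z^n),M^{q_0}(\Z^n))$; in either case $M^p(\Z^n)$ is the complex interpolation space at a suitable $\theta\in(0,1)$. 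Since $\mathfrak{L}^{g_1,g_2}_{\varsigma}$ is bounded on the outer endpoint and compact on the $M^2(\Z^n)$ endpoint, an interpolation theorem for compact operators (of Calder\'on--Krasnoselskii type; in the complex setting, Cwikel's theorem) yields that $\mathfrak{L}^{g_1,g_2}_{\varsigma}$ is compact on $M^p(\Z^n)$. The remaining value $p=2$ is the $\ell^2(\Z^n)$-compactness recorded above, so compactness holds for all $1<p<\infty$.

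The step I expect to be the main obstacle is exactly the invocation of this compactness-interpolation theorem: one must check that its hypotheses really apply to the non-Hilbertian couples $(M^1(\Z^n),M^2(\Z^n))$ and $(M^2(\Z^n),M^{q_0}(\Z^n))$ acting on themselves — which is why I keep the compact endpoint fixed at $M^2(\Z^n)$ and interpolate only against finite exponents — and that the complex-interpolation identity being used is the one valid for modulation spaces on $\Z^n$, not merely its $\R^n$ analogue. A more hands-on alternative would be to approximate $\varsigma$ in $M^1(\Z^n\times\T^n)$ by symbols $\varsigma_j$ for which $\mathfrak{L}^{g_1,g_2}_{\varsigma_j}$ is visibly compact on $M^p(\Z^n)$ and then pass to the limit via the bound $\Vert\mathfrak{L}^{g_1,g_2}_{\varsigma_j}-\mathfrak{L}^{g_1,g_2}_{\varsigma}\Vert_{\mathcal{B}(M^p(\Z^n))}\le\Vert\varsigma_j-\varsigma\Vert_{M^1(\Z^n\times\T^n)}\,\Vert g_1\Vert_{M^1(\Z^n)}\,\Vert g_2\Vert_{M^1(\Z^n)}$ from Proposition \ref{pro4} together with the closedness of the compact operators in $\mathcal{B}(M^p(\Z^n))$; the catch is that the range of $\mathfrak{L}^{g_1,g_2}_{\varsigma}$ is typically infinite-dimensional, so producing genuinely compact (e.g.\ finite-rank) approximants would still require an operator-norm approximation argument that is not entirely routine. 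For this reason I would present the interpolation route as the primary one.
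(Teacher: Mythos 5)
Your argument is correct in outline but takes a genuinely different route from the paper. The paper proves this proposition directly: it takes a sequence $f_j \rightharpoonup 0$ weakly in $M^p(\Z^n)$, notes that weak convergence forces $V_{g_1}f_j(m,w)=\langle f_j, M_wT_mg_1\rangle \to 0$ pointwise while remaining uniformly bounded by $C\Vert g_1\Vert_{M^1(\Z^n)}$, dominates $\Vert \mathfrak{L}^{g_1,g_2}_{\varsigma}f_j\Vert_{M^p(\Z^n)}$ by the absolutely convergent quantity $\sum_{k}\sum_{m}\int_{\T^n}|\varsigma(m,w)|\,|V_{g_1}f_j(m,w)|\,|M_wT_mg_2(k)|\,dw \le C\Vert g_1\Vert_{M^1}\Vert g_2\Vert_{M^1}\Vert\varsigma\Vert_{M^1}$, and concludes by dominated convergence that $\Vert \mathfrak{L}^{g_1,g_2}_{\varsigma}f_j\Vert_{M^p(\Z^n)}\to 0$, which gives compactness on the reflexive space $M^p(\Z^n)$. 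You instead combine the $S_1$ (hence $M^2$-compactness) statement from Theorem \ref{th2} with the uniform $M^q$-boundedness of Theorem \ref{th3} and interpolate compactness. This buys brevity and reuses proved results, but it places the whole burden on a compactness-interpolation theorem for the complex method on the couples $(M^1(\Z^n),M^2(\Z^n))$ and $(M^2(\Z^n),M^{q_0}(\Z^n))$ --- the step you rightly single out as the obstacle. It is defensible here: the discrete modulation spaces are retracts of the mixed-norm spaces $L^q(\Z^n\times\T^n)$ via $V_g$ and $V_g^*$, so a Krasnoselskii--Cwikel type theorem (with the compact endpoint at the finite exponent $2$, as you arrange) transfers, and the paper itself invokes ``interpolation of the compactness'' without further comment in the theorem immediately following this proposition. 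Still, the paper's dominated-convergence argument is more elementary and self-contained, whereas yours is shorter but rests on an external citation whose applicability to these spaces (complex versus real interpolation, the retract identification) you would need to spell out; your fallback approximation scheme, as you observe, does not by itself produce compact approximants on $M^p(\Z^n)$ and so cannot replace that step.
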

\begin{proof}
Let $\{ f_n \}_{n \in \mathbb{N}}$ be a sequence of functions in $M^p(\Z^n)$ such that $f_n \rightharpoonup 0$ weakly in $M^p(\Z^n)$ as $n \to \infty$. It is sufficient to prove that $\lim\limits_{n \to \infty} \Vert \mathfrak{L}^{g_1, g_2}_{\varsigma}f_n \Vert_{M^p(\Z^n)}=0$. From the relation (\ref{eq41}), we obtain 
\begin{eqnarray}\label{eq54}
\left| \mathfrak{L}^{g_1, g_2}_{\varsigma} f_n(k) \right|
\leq \sum_{m \in \Z^n}  \int_{\T^n} |\varsigma(m, w)| \left| \left\langle f_n,   M_w T_m g_1 \right\rangle_{\ell^2(\Z^n)} \right| \; \left| M_w T_m g_2(k) \right| \; dw.
\end{eqnarray}
Since $f_n \rightharpoonup 0$ weakly in $M^p(\Z^n)$, we deduce that
\begin{equation}\label{eq55}
\lim_{n \to \infty} |\varsigma(m, w)| \left| \left\langle f_n, M_w T_m g_1 \right\rangle_{\ell^2(\Z^n)} \right| \; \left| M_w T_m g_2(k) \right|=0, \quad  \text{for all} \;\; m, k \in \Z^n \text{\;and\;} w \in \T^n.
\end{equation}
Moreover, as $f_n \rightharpoonup 0$ weakly in $M^p(\Z^n)$ as $n \to \infty$, then there exists a constant $C>0$ such that $\Vert f_n \Vert_{M^p(\Z^n)} \leq C$. Hence, for all  $m, k \in \Z^n \text{\;and\;} w \in \T^n$, we obtain 
\begin{equation}\label{eq56}
|\varsigma(m, w)| \left| \left\langle f_n, M_w T_m g_1 \right\rangle_{\ell^2(\Z^n)} \right|  \left| M_w T_m g_2(k) \right|  \leq C |\varsigma(m, w)| \; \Vert g_1 \Vert_{M^1(\Z^n)} \left| M_w T_m g_2(k) \right|.
\end{equation}
Since $p \geq 2$, using $\ell^1(\Z^n) \subset \ell^2(\Z^n) \subset M^p(\Z^n)$, we get
\begin{eqnarray}\label{eq57}
\Vert \mathfrak{L}^{g_1, g_2}_{\varsigma} f_n \Vert_{M^p(\Z^n)}  
& \leq & \left\|  \mathfrak{L}^{g_{1}, g_{2}}_{\varsigma} f_{n}  \right\|_{\ell^{1}\left( \Z^n \right)}
\nonumber \\
& \leq &  \sum_{k \in \Z^n}  \sum_{m \in \Z^n}  \int_{\T^n} |\varsigma(m, w)| \left| \left\langle f_n, M_w T_m g_1  \right\rangle_{\ell^2(\Z^n)} \right|  \left| M_w T_m g_2(k) \right| \; dw  \nonumber \\
& \leq & C \; \Vert g_1 \Vert_{M^1(\Z^n)} \sum_{m \in \Z^n}  \int_{\T^n} |\varsigma(m, w)|  \sum_{k \in \Z^n} \left|  M_w T_m g_2(k) \right| \; dw \nonumber \\
& \leq & C \; \Vert g_1 \Vert_{M^1(\Z^n)} \; \Vert g_2 \Vert_{M^1(\Z^n)} \; \Vert \varsigma \Vert_{M^1(\Z^n \times \T^n)} < \infty. 
\end{eqnarray}
Thus, using the Lebesgue dominated convergence theorem and the relations (\ref{eq54}) -- (\ref{eq57}), we obtain that 
\[\lim\limits_{n \to \infty} \Vert \mathfrak{L}^{g_1, g_2}_{\varsigma} f_n \Vert_{M^p(\Z^n)}=0.  \]
This completes the proof. 
\end{proof}

\begin{theorem}
Let $\varsigma \in M^1(\Z^n \times \T^n)$,  $g_1 \in  M^{p'}(\Z^n)$ and $g_2 \in  M^p(\Z^n)$, for $1 < p < \infty$. Then the localization operator $\mathfrak{L}^{g_1, g_2}_{\varsigma}: M^p(\Z^n) \to M^p(\Z^n)$ is compact.
\end{theorem}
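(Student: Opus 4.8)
The plan is to deduce compactness from the preceding proposition, in which the same conclusion was obtained under the stronger hypothesis $g_1,g_2\in M^1(\Z^n)$, by an approximation argument carried out in the window variables and controlled in operator norm by Proposition \ref{pro4}.

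First I would choose sequences $\{g_1^{(j)}\}_{j\ge1}$ and $\{g_2^{(j)}\}_{j\ge1}$ in $M^1(\Z^n)$ with $g_1^{(j)}\to g_1$ in $M^{p'}(\Z^n)$ and $g_2^{(j)}\to g_2$ in $M^p(\Z^n)$ as $j\to\infty$. This is possible because $M^1(\Z^n)$ is dense in $M^q(\Z^n)$ for every finite $q$ (for instance $\mathcal{S}_{\mathcal{C}}(\Z^n)\subset M^1(\Z^n)$ is dense in each such space), and here $1<p<\infty$ so both $p$ and $p'$ are finite. Since the localization operator is additive in each window slot — it is conjugate-linear in $g_1$ and linear in $g_2$ through the formula (\ref{eq41}) — one has
\begin{equation*}
\mathfrak{L}^{g_1,g_2}_{\varsigma}-\mathfrak{L}^{g_1^{(j)},g_2^{(j)}}_{\varsigma}=\mathfrak{L}^{g_1-g_1^{(j)},\,g_2}_{\varsigma}+\mathfrak{L}^{g_1^{(j)},\,g_2-g_2^{(j)}}_{\varsigma}.
\end{equation*}

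Next I would apply Proposition \ref{pro4} to each term on the right-hand side, obtaining
\begin{equation*}
\left\Vert\mathfrak{L}^{g_1,g_2}_{\varsigma}-\mathfrak{L}^{g_1^{(j)},g_2^{(j)}}_{\varsigma}\right\Vert_{\mathcal{B}(M^p(\Z^n))}\le\Vert\varsigma\Vert_{M^1(\Z^n\times\T^n)}\Bigl(\Vert g_1-g_1^{(j)}\Vert_{M^{p'}(\Z^n)}\,\Vert g_2\Vert_{M^p(\Z^n)}+\Vert g_1^{(j)}\Vert_{M^{p'}(\Z^n)}\,\Vert g_2-g_2^{(j)}\Vert_{M^p(\Z^n)}\Bigr).
\end{equation*}
Because $\Vert g_1^{(j)}\Vert_{M^{p'}(\Z^n)}$ stays bounded (it converges to $\Vert g_1\Vert_{M^{p'}(\Z^n)}$), the right-hand side tends to $0$, so $\mathfrak{L}^{g_1^{(j)},g_2^{(j)}}_{\varsigma}\to\mathfrak{L}^{g_1,g_2}_{\varsigma}$ in $\mathcal{B}(M^p(\Z^n))$. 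Each $\mathfrak{L}^{g_1^{(j)},g_2^{(j)}}_{\varsigma}$ is compact on $M^p(\Z^n)$ by the preceding proposition (its windows lie in $M^1(\Z^n)$ and $\varsigma\in M^1(\Z^n\times\T^n)$), and the compact operators form a closed subspace of $\mathcal{B}(M^p(\Z^n))$; hence $\mathfrak{L}^{g_1,g_2}_{\varsigma}$ is compact.

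The only point requiring care is the density of $M^1(\Z^n)$ in $M^{p'}(\Z^n)$ and $M^p(\Z^n)$, which I would justify using the structure of modulation spaces recalled in Section \ref{sec3} (density of $\mathcal{S}_{\mathcal{C}}(\Z^n)$ together with the embedding chain) rather than any new estimate; I would also note in passing that for windows in $M^{p'}\times M^p$ the operator (\ref{eq41}) is well defined and agrees on the relevant dense subspace with the $M^1$-window operator, which is immediate from the explicit formula. Everything else is a formal consequence of Proposition \ref{pro4} and the additivity of the localization operator in its windows, so I do not anticipate any genuine obstacle.
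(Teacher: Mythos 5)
Your argument is correct, but it takes a genuinely different route from the paper's. The paper proceeds by duality and interpolation: it views $\mathfrak{L}^{g_1,g_2}_{\varsigma}$ acting on $M^{p'}(\Z^n)$ as the adjoint of $\mathfrak{L}^{g_2,g_1}_{\overline{\varsigma}}$ acting on $M^p(\Z^n)$, invokes the preceding proposition together with Schauder-type duality to get compactness on $M^{p'}(\Z^n)$, and then appeals to interpolation of compactness between $M^p(\Z^n)$ and $M^{p'}(\Z^n)$. You instead approximate in the window variables: using the density of $M^1(\Z^n)$ in $M^{p}(\Z^n)$ and $M^{p'}(\Z^n)$ (both exponents being finite), the sesquilinearity of $(g_1,g_2)\mapsto\mathfrak{L}^{g_1,g_2}_{\varsigma}$, and the operator-norm bound of Proposition \ref{pro4}, you exhibit $\mathfrak{L}^{g_1,g_2}_{\varsigma}$ as an operator-norm limit of the compact operators $\mathfrak{L}^{g_1^{(j)},g_2^{(j)}}_{\varsigma}$ furnished by the preceding proposition, and conclude by closedness of the compact operators. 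Your route is more elementary --- it needs neither Schauder's theorem on $M^{p'}(\Z^n)$ nor the rather delicate interpolation of compactness --- and it engages directly with the actual content of the theorem, namely the relaxation of the window hypotheses from $M^1\times M^1$ to $M^{p'}\times M^p$; note that the paper's appeal to the preceding proposition for $\mathfrak{L}^{g_2,g_1}_{\overline{\varsigma}}$, whose windows lie only in $M^p(\Z^n)\times M^{p'}(\Z^n)$, is not literally covered by that proposition's hypotheses and would itself require an approximation step of exactly the kind you supply. The one ingredient you should record explicitly when writing this up is the density of $M^1(\Z^n)$ in $M^q(\Z^n)$ for finite $q$ (via $\mathcal{S}_{\mathcal{C}}(\Z^n)$ and the inclusion relations of Section \ref{sec3}), which you have correctly flagged as the only external fact used.
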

\begin{proof}
Let $p'$ be the conjugate exponent of $p$. We first show that the conclusion of the previous proposition holds for $p'$. The operator $\mathfrak{L}^{g_1, g_2}_{\varsigma}: M^{p'}(\Z^n) \to M^{p'}(\Z^n)$ is the adjoint of the operator $\mathfrak{L}^{g_2, g_1}_{\overline{\varsigma}}: M^p(\Z^n) \to M^p(\Z^n)$, which is compact by the previous proposition for fixed $2 \leq p < \infty$. Hence, by the duality properties of modulation spaces, $\mathfrak{L}^{g_1, g_2}_{\varsigma}: M^{p'}(\Z^n) \to M^{p'}(\Z^n)$ is compact for fixed $1 < p' \leq 2$. Finally, using the interpolation of the compactness on $M^p(\Z^n)$ and on $M^{p'}(\Z^n)$, the proof is complete. 
\end{proof}

\section{The Landau--Pollak--Slepian Operator}\label{sec5}

In this section, we show that the Landau--Pollak--Slepian type operator is actually a localization operator. We first need to introduce the following operators to define the Landau--Pollak--Slepian operator on $\Z^n$. 

Let $\Omega$ and $T$ be two positive numbers. Then we define the operators $P_{\Omega} : \ell^2(\Z^n ) \to \ell^2(\Z^n )$ and $Q_T : \ell^2(\Z^n) \to \ell^2(\Z^n)$ by 
\begin{equation}\label{eq19}
(P_{\Omega} f)^{\wedge}(w)= \left\{\begin{array}{cc}
\hat{f}(w), & |w| \leq \Omega, \\
0, & |w|>\Omega,
\end{array}\right.
\end{equation}
where $w \in \T^n$, $|w|=|w_1|+|w_2|+ \cdots + |w_n|$ and 
\begin{equation}\label{eq20}
\left(Q_{T} f\right)(k)= \left\{\begin{array}{cc}
f(k), & |k| \leq T, \\
0, & |k|>T,
\end{array}\right.
\end{equation}
where $k \in \Z^n$, $|k|=|k_1|+|k_2|+ \cdots + |k_n|$ and $f \in \ell^{2}(\Z^{n})$. 
\begin{proposition}
The operators $P_{\Omega}$ and $Q_T$ are self-adjoint projections.
\end{proposition}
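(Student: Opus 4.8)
The plan is to verify the two defining properties of an orthogonal projection for each operator separately: idempotency ($P_\Omega^2 = P_\Omega$ and $Q_T^2 = Q_T$) and self-adjointness ($P_\Omega^* = P_\Omega$ and $Q_T^* = Q_T$). Both operators are defined by multiplication by a $\{0,1\}$-valued function---$Q_T$ by multiplication by the indicator $\chi_{\{|k|\le T\}}$ on $\Z^n$, and $P_\Omega$ by conjugating with the Fourier transform $\mathcal{F}_{\Z^n}$ a multiplication by the indicator $\chi_{\{|w|\le\Omega\}}$ on $\T^n$. Since multiplication by a $\{0,1\}$-valued function is trivially idempotent (as $\chi^2=\chi$), idempotency of $Q_T$ is immediate, and idempotency of $P_\Omega$ follows from $\mathcal{F}_{\Z^n}$ being invertible: $P_\Omega^2 = \mathcal{F}_{\T^n}(\chi_{\{|w|\le\Omega\}}\mathcal{F}_{\Z^n}\mathcal{F}_{\T^n}\chi_{\{|w|\le\Omega\}}\mathcal{F}_{\Z^n}) = \mathcal{F}_{\T^n}\chi_{\{|w|\le\Omega\}}\mathcal{F}_{\Z^n} = P_\Omega$, using $\mathcal{F}_{\Z^n}\mathcal{F}_{\T^n} = \mathrm{Id}$ from the excerpt.

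For self-adjointness of $Q_T$, I would compute directly: for $f,h\in\ell^2(\Z^n)$,
\[
\langle Q_T f, h\rangle_{\ell^2(\Z^n)} = \sum_{|k|\le T} f(k)\overline{h(k)} = \sum_{k\in\Z^n} f(k)\overline{\chi_{\{|k|\le T\}}(k)h(k)} = \langle f, Q_T h\rangle_{\ell^2(\Z^n)},
\]
since $\chi$ is real-valued; hence $Q_T^* = Q_T$. For $P_\Omega$, I would use that $\mathcal{F}_{\Z^n}\colon \ell^2(\Z^n)\to L^2(\T^n)$ is a unitary (Plancherel: $\|\mathcal{F}_{\Z^n}F\|_{L^2(\T^n)} = \|F\|_{\ell^2(\Z^n)}$, and $\mathcal{F}_{\Z^n}^{-1} = \mathcal{F}_{\Z^n}^*$ from the excerpt) together with the fact that multiplication by the real indicator $\chi_{\{|w|\le\Omega\}}$ is self-adjoint on $L^2(\T^n)$. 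Writing $P_\Omega = \mathcal{F}_{\Z^n}^* M_\chi \mathcal{F}_{\Z^n}$ where $M_\chi$ denotes that multiplication operator, we get $P_\Omega^* = \mathcal{F}_{\Z^n}^* M_\chi^* \mathcal{F}_{\Z^n} = \mathcal{F}_{\Z^n}^* M_\chi \mathcal{F}_{\Z^n} = P_\Omega$. Combined with idempotency, this shows $P_\Omega$ and $Q_T$ are self-adjoint projections.

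I do not anticipate any serious obstacle here; the statement is essentially bookkeeping once one observes that both operators are unitary conjugates of multiplication by a real $\{0,1\}$-valued function. The only point requiring a little care is making the identification $P_\Omega = \mathcal{F}_{\Z^n}^* M_\chi \mathcal{F}_{\Z^n}$ rigorous---i.e., checking that the Fourier-side description (\ref{eq19}) indeed corresponds to this composition on all of $\ell^2(\Z^n)$, which follows since $(P_\Omega f)^\wedge = \chi_{\{|w|\le\Omega\}}\hat f$ means exactly $\mathcal{F}_{\Z^n}(P_\Omega f) = M_\chi(\mathcal{F}_{\Z^n}f)$, and then applying $\mathcal{F}_{\T^n} = \mathcal{F}_{\Z^n}^{-1}$ to both sides. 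One should also note in passing that $Q_T$ (and similarly $P_\Omega$) is bounded with norm at most $1$, so that "projection" is meant in the Hilbert-space sense of a bounded idempotent, which is automatic from the above.
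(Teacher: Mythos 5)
Your proof is correct and rests on exactly the same ingredients as the paper's: Plancherel/unitarity of $\mathcal{F}_{\Z^n}$ and the fact that the relevant indicator functions are real-valued and idempotent. The only difference is presentational — you package $P_\Omega$ as the unitary conjugate $\mathcal{F}_{\Z^n}^* M_\chi \mathcal{F}_{\Z^n}$ and read off both properties at once, whereas the paper writes out the corresponding inner-product computations term by term (proving self-adjointness first and then using it to get idempotency).
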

\begin{proof}
For $f, h \in \ell^2(\Z^n)$, using Plancherel's formula, we get
\begin{eqnarray*}
\left\langle P_{\Omega} f, h \right\rangle_{\ell^2(\Z^n)}
& = & \left\langle  (P_{\Omega} f )^{\wedge}, \hat{h} \right\rangle_{L^2(\T^n)}
= \int_{\T^n} (P_{\Omega} f)^{\wedge}(w) \; \overline{\hat{h}(w)} \; dw
= \int_{B_\Omega} \hat{f}(w) \; \overline{\hat{h}(w)} \; dw\\
& = & \int_{B_\Omega} \hat{f}(w) \; \overline{(P_{\Omega} h)^{\wedge}(w)} \; dw 
= \int_{\T^n} \hat{f}(w) \; \overline{(P_{\Omega} h)^{\wedge}(w)} \; dw \\
& = & \left\langle \hat{f}, (P_{\Omega} h )^{\wedge} \right\rangle_{L^2(\T^n)}
= \left\langle f, P_{\Omega} h \right\rangle_{\ell^2(\Z^n)},
\end{eqnarray*}
where $B_{\Omega} := \left\{ w \in \T^n: \;|w| \leq \Omega \right\}$. Hence the operator $P_{\Omega}$ is self-adjoint. Also, for $f, h \in \ell^2(\Z^n)$, we have 
\begin{eqnarray*}
\left\langle Q_{T} f, h \right\rangle_{\ell^2(\Z^n)}
= \sum_{k \in \Z^n} (Q_{T} f)(k) \; \overline{h(k)} 
= \sum_{k \in B_T}  f(k) \; \overline{h(k)}
= \sum_{k \in \Z^n} f(k) \; \overline{(Q_{T} h)(k)}
= \left\langle f, Q_{T} h \right\rangle_{\ell^2(\Z^n)}, 
\end{eqnarray*}
where $B_{T}:= \left\{ k \in \Z^n: \;|k| \leq T \right\}$.  Hence the operator $Q_{T}$ is self-adjoint. Since the operator $P_{\Omega}$ is self-adjoint, using Plancherel's formula, we obtain
\begin{eqnarray*}
\left\langle P^2_{\Omega} f, h \right\rangle_{\ell^2(\Z^n)} 
& = & \left\langle P_{\Omega} f, P_{\Omega} h \right\rangle_{\ell^2(\Z^n)}
= \left\langle (P_{\Omega} f )^{\wedge}, (P_{\Omega} h )^{\wedge} \right\rangle_{L^2(\T^n)}
=\int_{\T^n} (P_{\Omega} f )^{\wedge}(w) \; \overline{(P_{\Omega} h )^{\wedge}(w)} \; dw \\
&=& \int_{B_{\Omega}} \hat{f}(w) \; \overline{\hat{h}(w)} \; dw 
= \int_{\T^n} (P_{\Omega} f )^{\wedge}(w) \; \overline{\hat{h}(w)} \; dw \\
& = & \left\langle (P_{\Omega} f )^{\wedge}, \hat{h} \right\rangle_{L^2(\T^n)}
= \left\langle P_{\Omega}f, h \right\rangle_{\ell^2(\Z^n )},
\end{eqnarray*}
for all $f, h \in \ell^2(\Z^n)$. Thus, $P^2_{\Omega} = P_{\Omega}$ and therefore $P_{\Omega}$ is a projection. Finally, using the fact that the operator $Q_T$ is self-adjoint, we get
\begin{eqnarray*}
\left\langle Q^2_T f, h \right\rangle_{\ell^2(\Z^n)} 
& = & \left\langle Q_T f, Q_T h \right\rangle_{\ell^2(\Z^n)}
=\sum_{k \in \Z^n} (Q_T f)(k) \; \overline{(Q_T h)(k)}
=\sum_{k \in B_T} f(k) \; \overline{h(k)} \\
& = & \sum_{k \in \Z^n} (Q_T f)(k) \; \overline{h(k)}
= \left\langle Q_T f, h \right\rangle_{\ell^2(\Z^n)} ,
\end{eqnarray*}
for all $f, h \in \ell^2(\Z^n)$. Thus, $Q^2_T = Q_T$ and therefore $Q_T$ is a projection. 
\end{proof} 

The linear operator $P_{\Omega} Q_T  P_{\Omega} : \ell^2(\Z^n ) \to \ell^2(\Z^n)$ is called the Landau--Pollak--Slepian operator on $\Z^n$. Next, we show that the operator $Q_{2T} P_{\Omega} Q_{2T}$ is actually a localization operator on $\Z^n$. 

\begin{theorem}
Let $g_1$ and $g_2$ be two functions on $\Z^n$ defined by 
\[g_1(k)=g_2(k) =\frac{1}{\sqrt{\mathrm{card}(B_T)}} \chi_{B_T}(k), \quad  k \in \Z^n,\]
where $\mathrm{card}(B_T)$ is the cardinality of $B_T$ and
$\chi_{B_T}$ is the characteristic function on $B_T$ defined by 
\[ \chi_{B_T}(k)= 
\begin{cases}
1, & |k| \leq T, \\ 
0, & |k|>T.
\end{cases}
\]
Let $\varsigma$ be the characteristic function on $B_T \times B_{\Omega}$, given by  
\[ \varsigma(m, w)= 
\begin{cases}
1, &  m \in B_T \; \mathrm{and} \;  w \in B_{\Omega}, \\ 
0, & \mathrm{otherwise}. 
\end{cases}
\]
Then the operator $Q_{2T} P_{\Omega} Q_{2T}: \ell^{2}(\Z^n) \to \ell^{2}(\Z^n)$ is equal to the localization operator $ \mathfrak{L}^{g_1, g_2}_{\varsigma} : \ell^2(\Z^n) \to \ell^2(\Z^n)$.  In fact, $ Q_{2T} P_{\Omega} Q_{2T} =\mathfrak{L}^{g_1, g_2}_{\varsigma}. $

% P_{\Omega} Q_{T} P_{\Omega}
% Landau--Pollak--Slepian

\end{theorem}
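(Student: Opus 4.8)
The plan is to unwind the definition~\eqref{eq41} of $\mathfrak{L}^{g_1,g_2}_\varsigma$ for this particular data and then, by comparing kernels, to check that the outcome is exactly $Q_{2T}P_\Omega Q_{2T}$. First I would substitute $\varsigma(m,w)=\chi_{B_T}(m)\,\chi_{B_\Omega}(w)$ and $g_1=g_2=c^{-1/2}\chi_{B_T}$, writing $c:=\mathrm{card}(B_T)$, into~\eqref{eq41}. Because $\varsigma$ is supported on $B_T\times B_\Omega$, the sum over $m$ collapses to $m\in B_T$ and the $\T^n$-integral to $w\in B_\Omega$; using $V_{g_1}f(m,w)=\sum_{j}f(j)\,\overline{g_1(j-m)}\,e^{-2\pi i w\cdot j}$ and $M_wT_mg_2(k)=e^{2\pi i w\cdot k}\,g_2(k-m)$ this gives
\[
\mathfrak{L}^{g_1,g_2}_\varsigma f(k)=\frac{1}{c}\sum_{m\in B_T}\chi_{B_T}(k-m)\int_{B_\Omega}\Big(\sum_{j\in\Z^n}f(j)\,\chi_{B_T}(j-m)\,e^{-2\pi i w\cdot j}\Big)e^{2\pi i w\cdot k}\,dw .
\]
The first (routine) point is to justify exchanging the finite sum over $m\in B_T$, the sum over $j$, and the integral over $w\in B_\Omega$; this is immediate from $\chi_{B_T}$ being bounded with finite support and the bound $\sum_j|f(j)\,\chi_{B_T}(j-m)|\le c^{1/2}\|f\|_{\ell^2(\Z^n)}$ coming from Cauchy--Schwarz, so Fubini applies.

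After the exchange the inner integral becomes $K_\Omega(k-j):=\int_{B_\Omega}e^{2\pi i w\cdot(k-j)}\,dw$. The structural observation that makes the identification possible is that $K_\Omega$ is precisely the convolution kernel of the band-limiting operator $P_\Omega$: from $(P_\Omega h)^{\wedge}=\hat h\cdot\chi_{B_\Omega}$ on $\T^n$ and the inversion formula one obtains $P_\Omega h(k)=\sum_j h(j)\,K_\Omega(k-j)$, that is, $P_\Omega h=h*K_\Omega$. Substituting this, the localization operator takes the kernel form
\[
\mathfrak{L}^{g_1,g_2}_\varsigma f(k)=\sum_{j\in\Z^n}\Big(\frac{1}{c}\sum_{m\in B_T}\chi_{B_T}(k-m)\,\chi_{B_T}(j-m)\Big)K_\Omega(k-j)\,f(j).
\]

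It then remains to carry out the sum over $m$ and to recognise the two truncations. Since $\chi_{B_T}(k-m)\ne 0$ together with $m\in B_T$ forces $|k|\le|k-m|+|m|\le 2T$, and likewise $\chi_{B_T}(j-m)\ne 0$ forces $|j|\le 2T$ (triangle inequality for the $\ell^1$ norm $|\cdot|$), the displayed kernel already vanishes unless $k,j\in B_{2T}$, and this is what produces the outer factor $Q_{2T}$ on the left and the factor $Q_{2T}$ on the right. The part I expect to require the most care is identifying the surviving middle factor with $P_\Omega$: one must show that the lattice-point count $\frac{1}{c}\,\mathrm{card}\{m\in B_T:\ k-m\in B_T,\ j-m\in B_T\}$ reduces to $\chi_{B_{2T}}(k)\,\chi_{B_{2T}}(j)$ on the range that matters, so that the kernel collapses to $\chi_{B_{2T}}(k)\,K_\Omega(k-j)\,\chi_{B_{2T}}(j)$, which is exactly the kernel of $Q_{2T}P_\Omega Q_{2T}$. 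It is precisely here that the normalisation $c^{-1/2}$ of the windows has to balance the overlap of the three translated copies of $B_T$; everything else (the interchange of sums, the rewriting of $P_\Omega$ as a convolution) is purely formal. Finally, both $\mathfrak{L}^{g_1,g_2}_\varsigma$ and $Q_{2T}P_\Omega Q_{2T}$ are bounded on $\ell^2(\Z^n)$ --- the former by Proposition~\ref{pro01}, since $\varsigma\in L^\infty(\Z^n\times\T^n)$ and $\chi_{B_T}\in M^1(\Z^n)$ --- so the pointwise coincidence of kernels upgrades to the operator identity $Q_{2T}P_\Omega Q_{2T}=\mathfrak{L}^{g_1,g_2}_\varsigma$.
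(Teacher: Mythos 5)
Your reduction to kernels is set up correctly, and the computation down to
\[
\mathcal{K}(k,j)=\Big(\tfrac{1}{c}\sum_{m\in B_T}\chi_{B_T}(k-m)\,\chi_{B_T}(j-m)\Big)\,K_\Omega(k-j),
\qquad K_\Omega(l)=\int_{B_\Omega}e^{2\pi i w\cdot l}\,dw,
\]
is sound; the Fubini justifications are indeed routine. The gap sits exactly at the step you flag as delicate: the weight $W(k,j):=\tfrac{1}{c}\,\mathrm{card}\{m\in B_T:\ k-m\in B_T,\ j-m\in B_T\}$ does \emph{not} reduce to $\chi_{B_{2T}}(k)\,\chi_{B_{2T}}(j)$, not even on the set where $\mathcal{K}$ is nonzero. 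Already for $n=1$, $T=1$, $k=j=2$ one has $B_T=\{-1,0,1\}$, $c=3$, and the only admissible $m$ is $m=1$, so $W(2,2)=1/3$, whereas $\chi_{B_{2T}}(2)^2=1$; since $K_\Omega(0)=\int_{B_\Omega}dw>0$, the two kernels genuinely differ at $(2,2)$. What the sum over $m$ actually produces is the normalized autocorrelation of $\chi_{B_T}$: a ``tent''-shaped weight supported in $B_{2T}\times B_{2T}$ that equals $1$ only when $k-B_T\subseteq B_T$ and $j-B_T\subseteq B_T$, i.e.\ only at $k=j=0$. So the normalization $c^{-1/2}$ does not balance the overlap of the translated copies of $B_T$ in the way you hope, and the argument cannot be completed as written.

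You should also be aware that this is not merely a defect of your route: the paper's own proof computes $V_{g_1}f(m,w)=c^{-1/2}e^{-2\pi i m\cdot w}\sum_{l\in B_T}e^{-2\pi i l\cdot w}f(m+l)$ and then identifies this sum with $\sum_{l\in\Z^n}e^{-2\pi i l\cdot w}(T_{-m}Q_{2T}f)(l)$, which for arbitrary $f$ requires $\chi_{B_T}(l)=\chi_{B_{2T}}(m+l)$ for all $l$; this fails already for $m=0$ and $T<|l|\le 2T$. Your kernel computation, pushed to the end, in fact shows that $\mathfrak{L}^{g_1,g_2}_{\varsigma}$ and $Q_{2T}P_{\Omega}Q_{2T}$ have different kernels (the former carries the autocorrelation weight $W$, the latter the flat weight $\chi_{B_{2T}}\otimes\chi_{B_{2T}}$), so the identity can only hold after modifying the windows, the symbol, or the truncation operator. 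In its present form the proposal does not prove the theorem, and the concrete obstruction is the false combinatorial identity above.
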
 

\begin{proof}
We have 
\[\Vert g_1 \Vert^2_{\ell^2(\Z^n)}= \Vert g_2 \Vert^2_{\ell^2(\Z^n)}= \frac{1}{\mathrm{card}(B_T)} \sum_{k \in \Z^n}  \left|\chi_{B_T}(k) \right|^2
= \frac{1}{\mathrm{card}(B_T)} \sum_{k \in B_T} 1 
=1. \]
Also, for all $f, h \in \ell^2(\Z^n)$, we have 
\begin{eqnarray*}
\left\langle \mathfrak{L}^{g_1, g_2}_{\varsigma}f, h \right\rangle_{\ell^2(\Z^n)}
& = & \sum_{m \in \Z^n}  \int_{\T^n} \varsigma(m, w) \; V_{g_1}f(m,w) \; \overline{V_{g_2}h(m,w)} \; dw \\
& = & \sum_{m \in B_T}  \int_{B_{\Omega}} V_{g_1}f(m,w) \; \overline{V_{g_2}h(m,w)} \; dw. 
\end{eqnarray*}
Now, we compute
\begin{eqnarray*}
V_{g_1}f(m,w) 
& = & \langle f, M_w T_m g_1 \rangle_{\ell^2(\Z^n)}
= \langle \hat{f}, \widehat{M_w T_m g_1} \rangle_{L^2(\T^n)} 
= \langle \hat{f}, T_w M_{-m} \hat{g_1} \rangle_{L^2(\T^n)} \\
& = & \int_{\T^n} \hat{f}(\xi) \; \overline{T_w M_{-m} \hat{g_1}(\xi)} \; d\xi 
= \int_{\T^n} \hat{f}(\xi) \; \overline{M_{-m} \hat{g_1}(\xi-w)} \; d\xi \\
& = & \int_{\T^n} \hat{f}(\xi) \;e^{2 \pi i m \cdot (\xi-w)} \overline{\hat{g_1}(\xi-w)} \; d\xi \\
& = & \int_{\T^n} \hat{f}(\xi) \;e^{- 2 \pi i m \cdot w} \;e^{2 \pi i m \cdot \xi}\; \frac{1}{\sqrt{\mathrm{card}(B_T)}} \sum_{k \in B_T} e^{2 \pi i k \cdot (\xi-w)} \; d\xi \\
& = & \frac{1}{\sqrt{\mathrm{card}(B_T)}} \;e^{- 2 \pi i m \cdot w} \sum_{k \in B_T} e^{- 2 \pi i k \cdot w} \int_{\T^n} \hat{f}(\xi) \;e^{2 \pi i (m+k) \cdot \xi}  \; d\xi \\
& = & \frac{1}{\sqrt{\mathrm{card}(B_T)}} \;e^{- 2 \pi i m \cdot w} \sum_{k \in B_T} e^{- 2 \pi i k \cdot w} \; f(m+k)  \\
& = & \frac{1}{\sqrt{\mathrm{card}(B_T)}} \;e^{- 2 \pi i m \cdot w} \sum_{k \in \Z^n} e^{- 2 \pi i k \cdot w} \; (T_{-m}Q_{2T} f)(k)  \\
& = & \frac{1}{\sqrt{\mathrm{card}(B_T)}} \;e^{- 2 \pi i m \cdot w} \;(T_{-m}Q_{2T} f)^{\wedge}(w) \\
& = & \frac{1}{\sqrt{\mathrm{card}(B_T)}} \;e^{- 2 \pi i m \cdot w} \; M_{m}(Q_{2T} f)^{\wedge}(w) \\
& = & \frac{1}{\sqrt{\mathrm{card}(B_T)}} \; (Q_{2T} f )^{\wedge}(w).
\end{eqnarray*}
Similarly, we get 
\[V_{g_2}h(m,w)=\frac{1}{\sqrt{\mathrm{card}(B_T)}} \; (Q_{2T} h )^{\wedge}(w).  \] 
Hence, using Plancherel's formula and the fact that the operators $P_{\Omega}$ and $Q_{2T}$ are self-adjoint projections, we obtain 
\begin{eqnarray*}
\left\langle \mathfrak{L}^{g_1, g_2}_{\varsigma}f, h \right\rangle_{\ell^2(\Z^n)} 
& = & \frac{1}{\mathrm{card}(B_T)} \sum_{m \in B_T}  \int_{B_{\Omega}}  (Q_{2T} f )^{\wedge}(w) \; \overline{(Q_{2T} h )^{\wedge}(w)} \; dw \\
& = & \int_{B_{\Omega}}  (Q_{2T} f )^{\wedge}(w) \; \overline{(Q_{2T} h )^{\wedge}(w)} \; dw \\
& = & \int_{\T^n} (P_{\Omega} Q_{2T} f )^{\wedge}(w) \; \overline{(P_{\Omega} Q_{2T} h )^{\wedge}(w)} \; dw \\
& = & \left\langle (P_{\Omega} Q_{2T} f )^{\wedge}, (P_{\Omega} Q_{2T} h )^{\wedge} \right\rangle_{L^2(\T^n)}
= \left\langle P_{\Omega} Q_{2T} f, P_{\Omega} Q_{2T} h \right\rangle_{\ell^2(\Z^n)} \\
& = & \left\langle P^2_{\Omega} Q_{2T} f, Q_{2T} h \right\rangle_{\ell^2(\Z^n)} 
= \left\langle Q_{2T} P_{\Omega} Q_{2T} f, h \right\rangle_{\ell^2(\Z^n)} ,
\end{eqnarray*}
for all $f, h \in \ell^2(\Z^n)$, and therefore we have the desired result.
\end{proof}

\section{Localization operators for different types of symbols}\label{sec678}

\subsection{Paracommutators}\label{sec6}
  
Let $\varsigma$ be a function on $\Z^n \times \T^n$ given by
\[ \varsigma(m, w)=\alpha(m) \beta(w),  \quad (m, w) \in \Z^n \times \T^n, \]
where $\alpha$ and $\beta$ are suitable functions on $\Z^n$ and $\T^n$ respectively. Then for all $f, h \in \ell^2(\Z^n)$, using Plancherel's formula, we obtain
\begin{eqnarray}
&& \left\langle \mathfrak{L}^{g_1, g_2}_{\varsigma}f, h \right\rangle_{\ell^2(\Z^n)} 
= \sum_{m \in \Z^n}  \int_{\T^n} \varsigma(m, w) \; V_{g_1}f(m,w) \;  \overline{V_{g_2}h(m,w)} \; dw  \nonumber \\
&& = \sum_{m \in \Z^n}  \int_{\T^n} \alpha(m) \beta(w)\; \langle f, M_w T_m g_1 \rangle_{\ell^2(\Z^n)} \;
\overline{\langle h, M_w T_m g_2 \rangle}_{\ell^2(\Z^n)} \; dw \nonumber \\
&& = \sum_{m \in \Z^n} \alpha(m) \int_{\T^n} \beta(w)\; \langle \hat{f}, \widehat{M_w T_m g_1} \rangle_{L^2(\T^n)} \; \overline{\langle \hat{h}, \widehat{M_w T_m g_2} \rangle}_{L^2(\T^n)} \; dw \nonumber \\
&& = \sum_{m \in \Z^n} \alpha(m) \int_{\T^n} \beta(w)\;\langle \hat{f}, T_w M_{-m} \hat{g_1} \rangle_{L^2(\T^n)} \; \overline{\langle \hat{h}, T_w M_{-m}\hat{g_2} \rangle}_{L^2(\T^n)} \; dw \nonumber \\
&& = \sum_{m \in \Z^n} \alpha(m) \int_{\T^n} \beta(w)  \left( \int_{\T^n} \hat{f}(\xi) \; \overline{T_w M_{-m} \hat{g_1}(\xi)} \; d\xi \right) \left(\int_{\T^n} \overline{\hat{h}(\eta)} \; T_w M_{-m}\hat{g_2}(\eta) \; d\eta \right) dw \nonumber \\
&& = \sum_{m \in \Z^n} \alpha(m) \int_{\T^n} \beta(w) \left( \int_{\T^n} \hat{f}(\xi) \; \overline{M_{-m} \hat{g_1}(\xi-w)} \; d\xi \right) \left(\int_{\T^n} \overline{\hat{h}(\eta)} \;  M_{-m}\hat{g_2}(\eta-w) \; d\eta \right) dw \nonumber \\
&& = \sum_{m \in \Z^n} \alpha(m) \int_{\T^n} \beta(w) \left( \int_{\T^n} \hat{f}(\xi) \;e^{2 \pi i m \cdot (\xi-w)} \overline{\hat{g_1}(\xi-w)} \; d\xi \right) \left(\int_{\T^n} \overline{\hat{h}(\eta)} \; e^{-2 \pi i m \cdot (\eta-w)} \hat{g_2}(\eta-w) \; d\eta \right) dw \nonumber \\
&& = \sum_{m \in \Z^n} \alpha(m) \int_{\T^n} \beta(w) \left( \int_{\T^n} \hat{f}(\xi) \;e^{2 \pi i m \cdot \xi} \overline{\hat{g_1}(\xi-w)} \; d\xi \right) \left(\int_{\T^n} \overline{\hat{h}(\eta)} \; e^{-2 \pi i m \cdot \eta} \hat{g_2}(\eta-w) \; d\eta \right) dw \nonumber \\
&& = \int_{\T^n} \int_{\T^n} \int_{\T^n} \left(\sum_{m \in \Z^n} \alpha(m) \; e^{-2 \pi i m \cdot (\eta - \xi)} \right) \beta(w) \hat{f}(\xi) \; \overline{\hat{h}(\eta)} \; \overline{\hat{g_1}(\xi-w)} \; \hat{g_2}(\eta-w) \; d\xi \; d\eta \; dw \nonumber
\end{eqnarray}
\begin{eqnarray}\label{eq21}
&& = \int_{\T^n} \int_{\T^n} \int_{\T^n} \hat{\alpha}(\eta - \xi) \; \beta(w) \hat{f}(\xi) \; \overline{\hat{h}(\eta)} \; \overline{\hat{g_1}(\xi-w)} \; \hat{g_2}(\eta-w) \; d\xi \; d\eta \; dw \nonumber \\
&& = \int_{\T^n} \int_{\T^n} A(\xi, \eta) \; \hat{\alpha}(\eta - \xi) \;  \hat{f}(\xi) \; \overline{\hat{h}(\eta)} \;  d\xi \; d\eta, 
\end{eqnarray}
where
\[A(\xi, \eta)=\int_{\T^n} \beta(w) \; \overline{\hat{g_1}(\xi-w)} \; \hat{g_2}(\eta-w) \; dw,\]
for all $\xi, \eta \in \T^n$. Thus, the localization operator $\mathfrak{L}^{g_1, g_2}_{\varsigma}$ is a paracommutator with Fourier kernel $A$ and symbol $\alpha$. 

\subsection{A Paraproduct Connection}\label{sec7}

Here, we consider the case when the symbol $\varsigma$ is independent of the second variable, i.e.,   
\[ \varsigma(m, w)=\alpha(m),  \quad (m, w) \in \Z^n \times \T^n, \]
where $\alpha$ is a function on $\Z^n$. Let $\tilde{g_1}$ be the function on $\Z^n$ defined by $\tilde{g_1}(k)=\overline{g_1(-k)}$, $k \in \Z^n$. Now, for all $f, h \in \ell^2(\Z^n)$, using Plancherel's formula as in the preceding subsection, we get  
\begin{eqnarray*}
&& \left\langle \mathfrak{L}^{g_1, g_2}_{\varsigma}f, h \right\rangle_{\ell^2(\Z^n)} 
= \int_{\T^n} \int_{\T^n} \int_{\T^n} \hat{\alpha}(\eta - \xi) \; \hat{f}(\xi) \; \overline{\hat{h}(\eta)} \; \overline{\hat{g_1}(\xi-w)} \; \hat{g_2}(\eta-w) \; d\xi \; d\eta \; dw \\
&&= \int_{\T^n} \left(\int_{\T^n} \int_{\T^n} \hat{\alpha}(\eta - \xi) \; \hat{f}(\xi) \; T_w \hat{\tilde{g_1}}(\xi) \; T_w\hat{g_2}(\eta) \; d\xi  \; dw \right) \overline{\hat{h}(\eta)} \; d\eta \\ 
&& = \int_{\T^n} \left(\int_{\T^n} \int_{\T^n} \hat{\alpha}(\eta - \xi) \; \hat{f}(\xi) \; (M_w \tilde{g_1})^{\wedge}(\xi) \; (M_w g_2)^{\wedge}(\eta) \; d\xi  \; dw \right) \overline{\hat{h}(\eta)} \; d\eta \\ 
&& = \int_{\T^n} \int_{\T^n} \left( \int_{\T^n} \hat{\alpha}(\eta - \xi) \; \hat{f}(\xi) \; (M_w \tilde{g_1})^{\wedge}(\xi) \; (M_w g_2)^{\wedge}(\eta) \; d\xi  \right) dw \; \overline{\hat{h}(\eta)} \; d\eta \\
&& = \int_{\T^n} \int_{\T^n} \left( \int_{\T^n} \hat{\alpha}(\eta - \xi) \; (M_w \tilde{g_1} * f)^{\wedge}(\xi) \; (M_w g_2)^{\wedge}(\eta) \; d\xi  \right) dw \; \overline{\hat{h}(\eta)} \; d\eta \\
&& = \int_{\T^n} \int_{\T^n}  (\hat{\alpha} * (M_w \tilde{g_1} * f)^{\wedge}) (\eta) \;(M_w g_2)^{\wedge}(\eta) \; \overline{\hat{h}(\eta)} \; d\eta \; dw \\
&& = \int_{\T^n} \int_{\T^n} (\alpha (M_w \tilde{g_1} * f))^{\wedge} (\eta) \;(M_w g_2)^{\wedge}(\eta) \; \overline{\hat{h}(\eta)} \; d\eta \; dw \\
&& = \int_{\T^n} \int_{\T^n} (\alpha (M_w \tilde{g_1} * f)*M_w g_2)^{\wedge} (\eta) \; \overline{\hat{h}(\eta)} \; d\eta \; dw \\
&& = \int_{\T^n} \left\langle (\alpha (M_w \tilde{g_1} * f)*M_w g_2)^{\wedge}, \hat{h} \right\rangle_{L^2(\T^n)}  \; dw \\
&& = \int_{\T^n} \left\langle (\alpha (M_w \tilde{g_1} * f)*M_w g_2),  h \right\rangle_{\ell^2(\Z^n)}  \; dw \\
&& = \left\langle \int_{\T^n} (\alpha (M_w \tilde{g_1} * f)*M_w g_2) \; dw ,  h \right\rangle_{\ell^2(\Z^n)}.
\end{eqnarray*}
Hence
\[ \mathfrak{L}^{g_1, g_2}_{\varsigma}f(k)= \int_{\T^n} (\alpha (M_w \tilde{g_1} * f)*M_w g_2)(k) \; dw, \quad k \in \Z^n.  \]  
Further,          
\begin{eqnarray*}
&& \left\langle \mathfrak{L}^{g_1, g_2}_{\varsigma}f, h \right\rangle_{\ell^2(\Z^n)} 
= \left\langle \int_{\T^n} (\alpha (M_w \tilde{g_1} * f)*M_w g_2) \; dw ,  h \right\rangle_{\ell^2(\Z^n)} \\
&& = \sum_{k \in \Z^n} \int_{\T^n} (\alpha (M_w \tilde{g_1} * f)*M_w g_2)(k) \; dw \; \overline{h(k)} 
= \sum_{k \in \Z^n} \alpha(k)\; p_{g_1, g_2}(f,h)(k),
\end{eqnarray*}
for all $f, h \in \ell^2(\Z^n)$, where $p_{g_1, g_2}(f,h)$ is the paraproduct of $f$ and $h$ with respect
to the window functions $g_1$ and $g_2$ given by
\[ p_{g_1, g_2}(f,h)(k)= \int_{\T^n} (M_w \tilde{g_1} * f)(k) \; \overline{(\overline{M_w g_2(- \; \cdot)} *h)(k)}  \; dw, \quad k \in \Z^n. \]
Next, we give an $\ell^1$-estimate on the paraproduct $p_{g_1, g_2}(f,h)$, where $f, h \in \ell^2(\Z^n)$, and the window functions $g_1$ and $g_2$ are also in $\ell^2(\Z^n)$. We have
\begin{eqnarray*}
&& \sum_{k \in \Z^n} p_{g_1, g_2}(f,h)(k) 
= \sum_{k \in \Z^n} \int_{\T^n} (M_w \tilde{g_1} * f)(k) \; \overline{(\overline{M_w g_2(- \; \cdot)} *h)(k)}  \; dw \\
&& = \int_{\T^n} \left\langle (M_w \tilde{g_1} * f),  (\overline{M_w g_2(- \; \cdot)} *h) \right\rangle_{\ell^2(\Z^n)} dw \\
&&= \int_{\T^n} \left\langle (M_w \tilde{g_1} * f)^{\wedge},  (\overline{M_w g_2(- \; \cdot)} *h)^{\wedge} \right\rangle_{L^2(\T^n)} dw \\
&&= \int_{\T^n} \int_{\T^n} (M_w \tilde{g_1} * f)^{\wedge}(\xi) \; \overline{(\overline{M_w g_2(- \; \cdot)} *h)^{\wedge}(\xi)}  \;  d\xi \; dw \\
&&= \int_{\T^n} \int_{\T^n} (M_w \tilde{g_1})^{\wedge}(\xi) \; \hat{f}(\xi) \; \overline{(\overline{M_w g_2(- \; \cdot)})^{\wedge}(\xi)} \; \overline{\hat{h}(\xi)} \;  d\xi \; dw \\
&&= \int_{\T^n} \int_{\T^n} (M_w \tilde{g_1})^{\wedge}(\xi) \; \hat{f}(\xi) \; (M_w g_2)^{\wedge}(\xi)  \; \overline{\hat{h}(\xi)} \;  d\xi \; dw \\
&&= \int_{\T^n} \int_{\T^n} T_w \hat{\tilde{g_1}}(\xi) \; \hat{f}(\xi) \; T_w \hat{g_2}(\xi) \; \overline{\hat{h}(\xi)} \;  d\xi \; dw \\
&&= \int_{\T^n} \left( \int_{\T^n} \overline{\hat{g_1}(\xi-w)} \; \hat{g_2}(\xi-w) \; dw \right) \; \hat{f}(\xi) \; \overline{\hat{h}(\xi)} \;  d\xi. 
\end{eqnarray*}
In the following,  we obtain an $\ell^1$-estimate for the paraproduct $p_{g_1, g_2}(f,h)$ in terms of the $\ell^2$-norms of $f$, $h$, $g_1$ and $g_2$. From Proposition \ref{pro01}, we know that localization operators $\mathfrak{L}^{g_1, g_2}_{\varsigma}: \ell^2(\Z^n) \to \ell^2(\Z^n)$ associated to symbols $\varsigma \in L^\infty(\Z^n \times \T^n)$  are bounded linear operators such that
\[ \left\Vert \mathfrak{L}^{g_1, g_2}_{\varsigma} \right\Vert_{\mathcal{B}(\ell^2(\Z^n))} \leq \Vert \varsigma \Vert_{L^\infty(\Z^n \times \T^n)} \; \Vert g_1 \Vert_{\ell^2(\Z^n)} \; \Vert g_2 \Vert_{\ell^2(\Z^n)}. \]
Hence
\begin{eqnarray*}
\left| \left\langle \mathfrak{L}^{g_1, g_2}_{\varsigma}f, h \right\rangle_{\ell^2(\Z^n)} \right| 
& = & \left|\sum_{k \in \Z^n} \alpha(k)\; p_{g_1, g_2}(f,h)(k) \right| \\
&\leq & \Vert \alpha \Vert_{\ell^\infty(\Z^n)} \; \Vert g_1 \Vert_{\ell^2(\Z^n)} \; \Vert g_2 \Vert_{\ell^2(\Z^n)}\; \Vert f \Vert_{\ell^2(\Z^n)} \; \Vert h \Vert_{\ell^2(\Z^n)} . 
\end{eqnarray*}
Since $p_{g_1, g_2}(f,h) \in \ell^1(\Z^n)$, it follows from the Hahn--Banach theorem that $p_{g_1, g_2}(f,h)$ is in the dual $(\ell^\infty(\Z^n))^*$ of $\ell^\infty(\Z^n)$  and 
\[ \Vert p_{g_1, g_2}(f,h) \Vert_{\ell^1(\Z^n)} \leq \Vert g_1 \Vert_{\ell^2(\Z^n)} \; \Vert g_2 \Vert_{\ell^2(\Z^n)}\; \Vert f \Vert_{\ell^2(\Z^n)} \; \Vert h \Vert_{\ell^2(\Z^n)} .  \]

\subsection{Fourier Multipliers}\label{sec8}

Let $\varsigma$ be a function on $\Z^n \times \T^n$ given by
\[ \varsigma(m, w)= \beta(w),  \quad (m, w) \in \Z^n \times \T^n. \]
Then, by (\ref{eq21}), we obtain for all $f, h \in \ell^2(\Z^n)$, 
\begin{eqnarray*}
&& \left\langle \mathfrak{L}^{g_1, g_2}_{\varsigma}f, h \right\rangle_{\ell^2(\Z^n)} \\
&& = \sum_{m \in \Z^n}  \int_{\T^n} \beta(w) \left( \int_{\T^n} \hat{f}(\xi) \;e^{2 \pi i m \cdot \xi} \overline{\hat{g_1}(\xi-w)} \; d\xi \right) \left(\int_{\T^n} \overline{\hat{h}(\eta)} \; e^{-2 \pi i m \cdot \eta} \hat{g_2}(\eta-w) \; d\eta \right) dw \\
&&= \sum_{m \in \Z^n}  \int_{\T^n} \int_{\T^n} \int_{\T^n} \beta(w) \; \hat{f}(\xi) \; e^{2 \pi i m \cdot (\xi - \eta)} \;\overline{\hat{g_1}(\xi-w)} \; \overline{\hat{h}(\eta)} \; \hat{g_2}(\eta-w) \; d\xi \; d\eta \;dw.
\end{eqnarray*}
Let $\phi(m)=e^{-  |m|^2 / 2}$, $m \in \Z^n$. For all positive numbers $\varepsilon$, let $\phi_{\varepsilon}(m)=e^{-  \varepsilon^2 |m|^2 / 2}$ and $I_\varepsilon$ be the number defined by 
\begin{eqnarray*}
I_\varepsilon 
& = & \sum_{m \in \Z^n}  \int_{\T^n} \int_{\T^n} \int_{\T^n} \beta(w) \; \hat{f}(\xi)\; e^{-  \varepsilon^2 |m|^2 / 2} \; e^{2 \pi i m \cdot (\xi - \eta)} \;\overline{\hat{g_1}(\xi-w)} \; \overline{\hat{h}(\eta)} \; \hat{g_2}(\eta-w) \; d\eta \;dw \; d\xi \\
& = & \int_{\T^n} \int_{\T^n} \int_{\T^n} \left( \sum_{m \in \Z^n} e^{-  \varepsilon^2 |m|^2 / 2}  \; e^{-2 \pi i m \cdot (\xi - \eta)} \right)\beta(w) \; \hat{f}(\xi)\; \overline{\hat{g_1}(\xi-w)} \; \overline{\hat{h}(\eta)} \; \hat{g_2}(\eta-w)  \; d\eta \;dw \; d\xi \\
& = & \int_{\T^n} \int_{\T^n} \int_{\T^n} \hat{\phi_{\varepsilon}}(\xi - \eta) \; \beta(w) \; \hat{f}(\xi)\; \overline{\hat{g_1}(\xi-w)} \; \overline{\hat{h}(\eta)} \; \hat{g_2}(\eta-w)  \; d\eta \;dw \; d\xi \\
& = & \int_{\T^n} \int_{\T^n} \left( \int_{\T^n} \hat{\phi_{\varepsilon}}(\xi - \eta) \;(M_w g_2)^{\wedge}(\eta) \; \overline{\hat{h}(\eta)} \; d\eta \right)\beta(w) \; \hat{f}(\xi) \; \overline{(M_w g_1)^{\wedge}(\xi)} \;dw \; d\xi   \\
& = & \int_{\T^n} \int_{\T^n} \beta(w) \; \overline{(M_w g_1)^{\wedge}(\xi)} \;  \hat{f}(\xi) \; (((M_w g_2)^{\wedge} \; \overline{\hat{h}})*\hat{\phi_{\varepsilon}})(\xi) \;dw \; d\xi . 
\end{eqnarray*}
Now, there exists a sequence $\{ \varepsilon_j \}_{j=1}^\infty$ of positive numbers such that $\varepsilon_j \to 0$, as $j \to \infty$ and
\[ ((M_w g_2)^{\wedge} \; \overline{\hat{h}})*\hat{\phi_{\varepsilon_j}} \to (M_w g_2)^{\wedge} \; \overline{\hat{h}} \]
in $ L^2(\T^n)$ and almost everywhere on $\T^n$ as $ j \to \infty$. Thus, 
\[ I_{\varepsilon_j} \to \int_{\T^n} \int_{\T^n} \beta(w) \; \overline{(M_w g_1)^{\wedge}(\xi)} \;  \hat{f}(\xi) \; (M_w g_2)^{\wedge}(\xi) \; \overline{\hat{h}(\xi)} \;dw \; d\xi. \]
Moreover, we can write
\[I_{\varepsilon}= \sum_{m \in \Z^n}  \int_{\T^n} \beta(w)\; e^{-\varepsilon^2 |m|^2 / 2} \; V_{g_1}f(m,w) \;\overline{V_{g_2}h(m,w)}\; dw .\]   
Thus, using Lebesgue's dominated convergence theorem, we obtain that
\[ I_{\varepsilon_j} \to \sum_{m \in \Z^n}  \int_{\T^n} \beta(w)\; V_{g_1}f(m,w) \;\overline{V_{g_2}h(m,w)}\; dw 
=  \left\langle \mathfrak{L}^{g_1, g_2}_{\varsigma}f , h \right\rangle_{\ell^2(\Z^n)} \]
as $j \to \infty$. Hence for all $f, h \in \ell^2(\Z^n)$, 
\[ \left\langle \mathfrak{L}^{g_1, g_2}_{\varsigma}f , h \right\rangle_{\ell^2(\Z^n)} = \left\langle T_\mu f , h \right\rangle_{\ell^2(\Z^n)}, \]
where $T_\mu$ is the Fourier multiplier with symbol $\mu$ given by 
\[ \mu(\xi)= \int_{\T^n} \beta(w) \; \overline{(M_w g_1)^{\wedge}(\xi)} \; (M_w g_2)^{\wedge}(\xi) \;dw, \quad \xi \in \T^n. \]

\section*{Acknowledgments}
The authors are deeply indebted to Prof. M. W. Wong for several fruitful discussions and generous comments. The authors wish to thank the anonymous referees for their valuable comments and suggestions that helped to improve the quality of the paper.

\section*{Conflict of interest}
The authors declare that there is no potential conflict of interest regarding the publication of this article.

\section*{Data Availability}
The authors confirm that the data supporting the findings of this study are available within the article and its supplementary materials.

\section*{ORCID}
{\it Aparajita Dasgupta} https://orcid.org/0000-0001-7093-8158 \\
{\it Anirudha Poria} https://orcid.org/0000-0002-0224-3642

\end{document}